\documentclass[a4paper, british]{amsart}

\date{Janvier 2019}

\ifdefined\subtitle

\addressindent 60mm
\date{Janvier 2019}
\bbkannee{71\textsuperscript{e} année, 2018--2019} 
\bbknumero{1158} 
\else
%
%
\usepackage{libertine}
\usepackage[libertine]{newtxmath}
\date{\today}
\fi

%
%

\usepackage{amssymb}
\usepackage{babel}
\usepackage{enumitem}
\usepackage{hyperref}
\usepackage[utf8]{inputenc}
\usepackage{newunicodechar}
\usepackage{mathtools}
\usepackage{varioref}
\usepackage[arrow,curve,matrix]{xy}

\usepackage{colortbl}
\usepackage{graphicx}
\usepackage{tikz}

\usepackage{mathrsfs}

%
%

\definecolor{linkred}{rgb}{0.7,0.2,0.2}
\definecolor{linkblue}{rgb}{0,0.2,0.6}

\setcounter{tocdepth}{1}

\numberwithin{figure}{section}

\usepackage[hyperpageref]{backref}


\sloppy

\setdescription{labelindent=\parindent, leftmargin=2\parindent}
\setitemize[1]{labelindent=\parindent, leftmargin=2\parindent}
\setenumerate[1]{labelindent=0cm, leftmargin=*, widest=iiii}

%
%
\newunicodechar{α}{\ensuremath{\alpha}}
\newunicodechar{β}{\ensuremath{\beta}}
\newunicodechar{χ}{\ensuremath{\chi}}
\newunicodechar{δ}{\ensuremath{\delta}}
\newunicodechar{ε}{\ensuremath{\varepsilon}}
\newunicodechar{Δ}{\ensuremath{\Delta}}
\newunicodechar{η}{\ensuremath{\eta}}
\newunicodechar{γ}{\ensuremath{\gamma}}
\newunicodechar{Γ}{\ensuremath{\Gamma}}
\newunicodechar{ι}{\ensuremath{\iota}}
\newunicodechar{κ}{\ensuremath{\kappa}}
\newunicodechar{λ}{\ensuremath{\lambda}}
\newunicodechar{Λ}{\ensuremath{\Lambda}}
\newunicodechar{ν}{\ensuremath{\nu}}
\newunicodechar{μ}{\ensuremath{\mu}}
\newunicodechar{ω}{\ensuremath{\omega}}
\newunicodechar{Ω}{\ensuremath{\Omega}}
\newunicodechar{π}{\ensuremath{\pi}}
\newunicodechar{Π}{\ensuremath{\Pi}}
\newunicodechar{φ}{\ensuremath{\phi}}
\newunicodechar{Φ}{\ensuremath{\Phi}}
\newunicodechar{ψ}{\ensuremath{\psi}}
\newunicodechar{Ψ}{\ensuremath{\Psi}}
\newunicodechar{ρ}{\ensuremath{\rho}}
\newunicodechar{σ}{\ensuremath{\sigma}}
\newunicodechar{Σ}{\ensuremath{\Sigma}}
\newunicodechar{τ}{\ensuremath{\tau}}
\newunicodechar{θ}{\ensuremath{\theta}}
\newunicodechar{Θ}{\ensuremath{\Theta}}
\newunicodechar{ξ}{\ensuremath{\xi}}
\newunicodechar{Ξ}{\ensuremath{\Xi}}
\newunicodechar{ζ}{\ensuremath{\zeta}}

\newunicodechar{ℓ}{\ensuremath{\ell}}
\newunicodechar{ï}{\"{\i}}

\newunicodechar{𝔸}{\ensuremath{\bA}}
\newunicodechar{𝔹}{\ensuremath{\bB}}
\newunicodechar{ℂ}{\ensuremath{\bC}}
\newunicodechar{𝔻}{\ensuremath{\bD}}
\newunicodechar{𝔼}{\ensuremath{\bE}}
\newunicodechar{𝔽}{\ensuremath{\bF}}
\newunicodechar{ℕ}{\ensuremath{\bN}}
\newunicodechar{ℙ}{\ensuremath{\bP}}
\newunicodechar{ℚ}{\ensuremath{\bQ}}
\newunicodechar{ℝ}{\ensuremath{\bR}}
\newunicodechar{𝕏}{\ensuremath{\bX}}
\newunicodechar{ℤ}{\ensuremath{\bZ}}
\newunicodechar{𝒜}{\ensuremath{\sA}}
\newunicodechar{ℬ}{\ensuremath{\sB}}
\newunicodechar{𝒞}{\ensuremath{\sC}}
\newunicodechar{𝒟}{\ensuremath{\sD}}
\newunicodechar{ℰ}{\ensuremath{\sE}}
\newunicodechar{ℱ}{\ensuremath{\sF}}
\newunicodechar{𝒢}{\ensuremath{\sG}}
\newunicodechar{ℋ}{\ensuremath{\sH}}
\newunicodechar{𝒥}{\ensuremath{\sJ}}
\newunicodechar{ℒ}{\ensuremath{\sL}}
\newunicodechar{𝒪}{\ensuremath{\sO}}
\newunicodechar{𝒬}{\ensuremath{\sQ}}
\newunicodechar{𝒯}{\ensuremath{\sT}}
\newunicodechar{𝒲}{\ensuremath{\sW}}

\newunicodechar{∂}{\ensuremath{\partial}}
\newunicodechar{∇}{\ensuremath{\nabla}}

\newunicodechar{↺}{\ensuremath{\circlearrowleft}}
\newunicodechar{∞}{\ensuremath{\infty}}
\newunicodechar{⊕}{\ensuremath{\oplus}}
\newunicodechar{⊗}{\ensuremath{\otimes}}
\newunicodechar{•}{\ensuremath{\bullet}}
\newunicodechar{Λ}{\ensuremath{\wedge}}
\newunicodechar{↪}{\ensuremath{\into}}
\newunicodechar{→}{\ensuremath{\to}}
\newunicodechar{↦}{\ensuremath{\mapsto}}
\newunicodechar{⨯}{\ensuremath{\times}}
\newunicodechar{∪}{\ensuremath{\cup}}
\newunicodechar{∩}{\ensuremath{\cap}}
\newunicodechar{⊋}{\ensuremath{\supsetneq}}
\newunicodechar{⊇}{\ensuremath{\supseteq}}
\newunicodechar{⊃}{\ensuremath{\supset}}
\newunicodechar{⊊}{\ensuremath{\subsetneq}}
\newunicodechar{⊆}{\ensuremath{\subseteq}}
\newunicodechar{⊂}{\ensuremath{\subset}}
\newunicodechar{≥}{\ensuremath{\geq}}
\newunicodechar{≠}{\ensuremath{\neq}}
\newunicodechar{≫}{\ensuremath{\gg}}
\newunicodechar{≪}{\ensuremath{\ll}}

\newunicodechar{≤}{\ensuremath{\leq}}
\newunicodechar{∈}{\ensuremath{\in}}
\newunicodechar{∉}{\ensuremath{\not \in}}
\newunicodechar{∖}{\ensuremath{\setminus}}
\newunicodechar{◦}{\ensuremath{\circ}}
\newunicodechar{°}{\ensuremath{^\circ}}
\newunicodechar{…}{\ifmmode\mathellipsis\else\textellipsis\fi}
\newunicodechar{·}{\ensuremath{\cdot}}
\newunicodechar{⋯}{\ensuremath{\cdots}}
\newunicodechar{∅}{\ensuremath{\emptyset}}
\newunicodechar{⇒}{\ensuremath{\Rightarrow}}

\newunicodechar{⁰}{\ensuremath{^0}}
\newunicodechar{¹}{\ensuremath{^1}}
\newunicodechar{²}{\ensuremath{^2}}
\newunicodechar{³}{\ensuremath{^3}}
\newunicodechar{⁴}{\ensuremath{^4}}
\newunicodechar{⁵}{\ensuremath{^5}}
\newunicodechar{⁶}{\ensuremath{^6}}
\newunicodechar{⁷}{\ensuremath{^7}}
\newunicodechar{⁸}{\ensuremath{^8}}
\newunicodechar{⁹}{\ensuremath{^9}}
\newunicodechar{ⁱ}{\ensuremath{^i}}

\newunicodechar{⌈}{\ensuremath{\lceil}}
\newunicodechar{⌉}{\ensuremath{\rceil}}
\newunicodechar{⌊}{\ensuremath{\lfloor}}
\newunicodechar{⌋}{\ensuremath{\rfloor}}

\newunicodechar{≅}{\ensuremath{\cong}}
\newunicodechar{⇔}{\ensuremath{\Leftrightarrow}}

%
%

\DeclareFontFamily{OMS}{rsfs}{\skewchar\font'60}
\DeclareFontShape{OMS}{rsfs}{m}{n}{<-5>rsfs5 <5-7>rsfs7 <7->rsfs10 }{}
\DeclareSymbolFont{rsfs}{OMS}{rsfs}{m}{n}
\DeclareSymbolFontAlphabet{\scr}{rsfs}
\DeclareSymbolFontAlphabet{\scr}{rsfs}

\DeclareFontFamily{U}{mathx}{\hyphenchar\font45}
\DeclareFontShape{U}{mathx}{m}{n}{
      <5> <6> <7> <8> <9> <10>
      <10.95> <12> <14.4> <17.28> <20.74> <24.88>
      mathx10
      }{}
\DeclareSymbolFont{mathx}{U}{mathx}{m}{n}
\DeclareFontSubstitution{U}{mathx}{m}{n}
\DeclareMathAccent{\wcheck}{0}{mathx}{"71}

%
%

\DeclareMathOperator{\Aut}{Aut}

\DeclareMathOperator{\Id}{Id}

\DeclareMathOperator{\red}{red}

\DeclareMathOperator{\Spec}{Spec}

\DeclareMathOperator{\supp}{supp}

\newcommand{\sA}{\scr{A}}
\newcommand{\sB}{\scr{B}}
\newcommand{\sC}{\scr{C}}
\newcommand{\sD}{\scr{D}}
\newcommand{\sE}{\scr{E}}
\newcommand{\sF}{\scr{F}}
\newcommand{\sG}{\scr{G}}
\newcommand{\sH}{\scr{H}}

\newcommand{\sJ}{\scr{J}}

\newcommand{\sL}{\scr{L}}

\newcommand{\sO}{\scr{O}}

\newcommand{\sQ}{\scr{Q}}

\newcommand{\sT}{\scr{T}}

\newcommand{\sW}{\scr{W}}


\newcommand{\bA}{\mathbb{A}}
\newcommand{\bB}{\mathbb{B}}
\newcommand{\bC}{\mathbb{C}}
\newcommand{\bD}{\mathbb{D}}
\newcommand{\bE}{\mathbb{E}}
\newcommand{\bF}{\mathbb{F}}

\newcommand{\bN}{\mathbb{N}}

\newcommand{\bP}{\mathbb{P}}
\newcommand{\bQ}{\mathbb{Q}}
\newcommand{\bR}{\mathbb{R}}

\newcommand{\bX}{\mathbb{X}}

\newcommand{\bZ}{\mathbb{Z}}


\theoremstyle{plain}
\newtheorem{thm}{Theorem}[section]

\newtheorem{cor}[thm]{Corollary}
\newtheorem{defn}[thm]{Definition}

\newtheorem{lem}[thm]{Lemma}

\newtheorem{prop}[thm]{Proposition}

\theoremstyle{remark}

\newtheorem{claim}[thm]{Claim}
\newtheorem{c-n-d}[thm]{Claim and Definition}

\newtheorem{notation}[thm]{Notation}
\newtheorem{obs}[thm]{Observation}
\newtheorem{rem}[thm]{Remark}

\newtheorem*{rem-nonumber}{Remark}
\newtheorem{setting}[thm]{Setting}

\numberwithin{equation}{thm}

\setlist[enumerate]{label=(\thethm.\arabic*), before={\setcounter{enumi}{\value{equation}}}, after={\setcounter{equation}{\value{enumi}}}}

\newcommand{\into}{\hookrightarrow}

\newcommand{\wtilde}{\widetilde}
\newcommand{\what}{\widehat}

%
%

\hyphenation{com-po-nents}
\hyphenation{Theo-rem}
\hyphenation{Vojta}

%
%

\newcommand\CounterStep{\addtocounter{thm}{1}\setcounter{equation}{0}}

\newcommand{\factor}[2]{\left. \raise 2pt\hbox{$#1$} \right/\hskip -2pt\raise -2pt\hbox{$#2$}}
%
%
\newcommand{\Preprint}[1]{}

%
%
\newcommand{\subversionInfo}{}
\newcommand{\svnid}[1]{}
\newcommand{\approvals}[2][Approval]{}

%
%

\renewcommand{\phi}{\varphi}

\theoremstyle{remark}
\newtheorem{ass}[thm]{Assumption}
\newtheorem*{com*}{Commentary}


\author{Stefan Kebekus} %
\ifdefined\subtitle
\address{Mathematisches Institut\\
  Albert-Ludwigs-Universität Freiburg\\
  Ernst-Zermelo-Straße 1\\
  79104 Freiburg im Breisgau, Germany \\
  \ \\
  Freiburg Institute for Advanced Studies (FRIAS)\\
  Freiburg im Breisgau, Germany
} %
\else
\address{Mathematisches Institut, Albert-Ludwigs-Universität Freiburg,
  Ernst-Zermelo-Straße 1, 79104 Freiburg im Breisgau, Germany \& Freiburg
  Institute for Advanced Studies (FRIAS), Freiburg im Breisgau, Germany} %
\fi
\email{\href{mailto:stefan.kebekus@math.uni-freiburg.de}{stefan.kebekus@math.uni-freiburg.de}}
\urladdr{\url{https://cplx.vm.uni-freiburg.de}} %
\thanks{Stefan Kebekus gratefully acknowledges support through a fellowship of
  the Freiburg Institute of Advanced Studies (FRIAS)}

\title{Boundedness results for singular Fano varieties, and applications to Cremona groups} %

\ifdefined\subtitle
\subtitle{following Birkar and Prokhorov-Shramov}
\fi

\makeatletter
\hypersetup{
  pdfauthor={\authors},
  pdftitle={\@title},
  pdfsubject={\@subjclass},
  pdfkeywords={\@keywords},
  pdfstartview={Fit},
  pdfpagelayout={TwoColumnRight},
  pdfpagemode={UseOutlines},
  bookmarks,
  colorlinks,
  linkcolor=linkblue,
  citecolor=linkred,
  urlcolor=linkred
}
\makeatother

%
%

\DeclareMathOperator{\Bir}{Bir}
\DeclareMathOperator{\DCC}{DCC}
\DeclareMathOperator{\Div}{Div}
\DeclareMathOperator{\Fano}{Fano}
\DeclareMathOperator{\GL}{GL}
\DeclareMathOperator{\Jordan}{Jordan}
\DeclareMathOperator{\lct}{lct}
\DeclareMathOperator{\mult}{mult}
\DeclareMathOperator{\PGL}{ℙ\operatorname{GL}}
\DeclareMathOperator{\vol}{vol}

\newcommand{\cG}{\mathcal G}
\newcommand{\cP}{\mathcal P}
\newcommand{\cR}{\mathcal R}
\newcommand{\cS}{\mathcal S}

\begin{document}

\maketitle

\ifdefined\subtitle
\else
\begin{abstract}
  This survey paper reports on work of Birkar, who confirmed a long-standing
  conjecture of Alexeev and Borisov-Borisov: Fano varieties with mild
  singularities form a bounded family once their dimension is fixed.  Following
  Prokhorov-Shramov, we explain how this boundedness result implies that
  birational automorphism groups of projective spaces satisfy the Jordan
  property, answering a question of Serre in the positive.
\end{abstract}
\fi

\tableofcontents

%
%
\svnid{$Id: 01-intro.tex 85 2019-04-08 23:28:39Z kebekus $}

\section{Main results}
\subversionInfo

Throughout this paper, we work over the field of complex numbers.

\subsection{Boundedness of singular Fano varieties}

A normal, projective variety $X$ is called \emph{Fano} if a negative multiple of
its canonical divisor class is Cartier and if the associated line bundle is
ample.  Fano varieties appear throughout geometry and have been studied
intensely, in many contexts.  For the purposes of this talk, we remark that
Fanos with sufficiently mild singularities constitute one of the fundamental
variety classes in birational geometry.  In fact, given any projective manifold
$X$, the Minimal Model Programme (MMP) predicts the existence of a sequence of
rather special birational transformations, known as ``divisorial contractions''
and ``flips'', as follows,
$$
\xymatrix{ %
  X = X^{(0)} \ar@{-->}[rr]^{α^{(1)}}_{\text{birational}} && X^{(1)}
  \ar@{-->}[rr]^{α^{(2)}}_{\text{birational}} && ⋯
  \ar@{-->}[rr]^{α^{(n)}}_{\text{birational}} && X^{(n)}.
}
$$
The resulting variety $X^{(n)}$ is either canonically polarised (which is to say
that a suitable power of its canonical sheaf is ample), or it has the structure
of a fibre space whose general fibres are either Fano or have numerically
trivial canonical class.  The study of (families of) Fano varieties is thus one
of the most fundamental problems in birational geometry.

\begin{rem}[Singularities]
  Even though the starting variety $X$ is a manifold by assumption, it is well
  understood that we cannot expect the varieties $X^{(•)}$ to be smooth.
  Instead, they exhibit mild singularities, known as ``terminal'' or
  ``canonical'' --- we refer the reader to \cite[Sect.~2.3]{KM98} or
  \cite[Sect.~2]{MR3057950} for a discussion and for references.  If $X^{(n)}$
  admits the structure of a fibre space, its general fibres will also have
  terminal or canonical singularities.  Even if one is primarily interested in
  the geometry of \emph{manifolds}, it is therefore necessary to include
  families of \emph{singular} Fanos in the discussion.
\end{rem}

In a series of two fundamental papers, \cite{Bir16a, Bir16b}, Birkar confirmed a
long-standing conjecture of Alexeev and Borisov-Borisov, \cite{MR1298994,
  MR1166957}, asserting that for every $d ∈ ℕ$, the family of $d$-dimensional
Fano varieties with terminal singularities is bounded: there exists a proper
morphism of quasi-projective schemes over the complex numbers, $u : 𝕏 → Y$, and
for every $d$-dimensional Fano $X$ with terminal singularities a closed point
$y ∈ Y$ such that $X$ is isomorphic to the fibre $𝕏_y$.  In fact, a much more
general statement holds true.

\begin{thm}[\protect{Boundedness of $ε$-lc Fanos, \cite[Thm.~1.1]{Bir16b}}]\label{thm:BAB}
  Given $d ∈ ℕ$ and $ε ∈ ℝ^+$, let $\mathcal X_{d,ε}$ be the family of
  projective varieties $X$ with dimension $\dim_ℂ X = d$ that admit an
  $ℝ$-divisor $B ∈ ℝ\Div(X)$ such that the following holds true.
  \begin{enumerate}
  \item\label{il:BAB1} The tuple $(X,B)$ forms a pair.  In other words: $X$ is
    normal, the coefficients of $B$ are contained in the interval $[0,1]$ and
    $K_X+B$ is $ℝ$-Cartier.
    
  \item\label{il:BAB2} The pair $(X,B)$ is $ε$-lc.  In other words, the total
    log discrepancy of $(X,B)$ is greater than or equal to $ε$.
    
  \item\label{il:BAB3} The $ℝ$-Cartier divisor $-(K_X+B)$ is nef and big.
  \end{enumerate}
  Then, the family $\mathcal X_{d,ε}$ is bounded.
\end{thm}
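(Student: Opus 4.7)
The plan is to assemble the result from three major ingredients, whose interplay is orchestrated by an induction on dimension: a bounded theory of complements, an effective birationality statement for the anticanonical system, and a boundedness criterion for log Fano pairs with bounded volume and bounded total log discrepancy. After preliminary reductions---perturbing $B$ to land in a fixed DCC set of coefficients, and shrinking $\varepsilon$ to $\varepsilon/2$ to gain room to manoeuvre---I would attack these ingredients in turn.

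First I would establish a theory of bounded complements: there exists $n = n(d,\varepsilon) \in \bN$ such that for every $(X,B) \in \mathcal X_{d,\varepsilon}$ there is an $\bR$-divisor $B^+ \geq B$ with $(X,B^+)$ log canonical and $n(K_X+B^+) \sim 0$. This should be proved by induction on $d$, alternating between the Mori fibre space case, where complements are lifted from the base using Kawamata's canonical bundle formula and adjunction, and the birational case, where one runs a carefully chosen MMP on $-(K_X+B)$ and extends the complement across the contracted divisors. Real coefficients in $B$ are controlled via the ACC for log canonical thresholds of Hacon--McKernan--Xu.

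Second, with bounded complements in hand, I would deduce effective birationality: there is $m = m(d,\varepsilon)$ such that $|{-m(K_X+B)}|$ defines a birational map. The argument is Shokurov's classical tie-breaking applied to a divisor in $|{-n(K_X+B^+)}|$, using Nadel vanishing and subadjunction on minimal non-klt centres to separate and lift sections at two general points. The same circle of ideas then yields a uniform bound $\vol(-(K_X+B)) \leq M(d,\varepsilon)$: an unbounded volume would produce non-klt centres of codimension incompatible with the $\varepsilon$-lc hypothesis.

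With a volume bound and the $\varepsilon$-lc condition available, the proof concludes by invoking the Hacon--McKernan--Xu boundedness criterion for log Fano pairs of bounded dimension, bounded volume and bounded total log discrepancy, which directly yields log boundedness of $\mathcal X_{d,\varepsilon}$. By a wide margin, the principal obstacle is the first step. The effective birationality statement and the final invocation of Hacon--McKernan--Xu are, morally, consequences of having a bounded complement; building that complement, however, requires a delicate simultaneous induction that must juggle $\bR$-boundaries, DCC/ACC properties of discrepancies, and the geometry of non-klt centres on Mori fibre spaces, and this is the technical heart of Birkar's two papers.
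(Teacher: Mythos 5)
Your proposal correctly identifies the first three pillars of Birkar's proof---boundedness of complements, effective birationality, and a uniform volume bound---and the inductive architecture holding them together. But there is a genuine gap at the final step, and it is not a small one: you cannot conclude by ``invoking the Hacon--McKernan--Xu boundedness criterion'' directly. The criterion (Theorem~\ref{thm:boundednessCriterion}, i.e.\ \cite[Thm.~1.3]{MR3507257}) requires a \emph{klt} pair $(X,B)$ with $B$ big, coefficients in a DCC set, and $K_X+B\sim_{\bQ}0$. What your complement step produces is an \emph{lc} pair $(X,B^+)$ with $n(K_X+B^+)\sim 0$, and an lc complement will in general not be klt. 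This is precisely the obstruction flagged in Observation~\ref{obs:1} of the paper: if one could always choose $B^+$ klt, boundedness would follow at once, but one cannot.

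Closing this gap is the content of Birkar's entire second paper \cite{Bir16b}, whose main technical result is the lower bound on log canonical thresholds of anticanonical linear systems (Theorem~\ref{thm:lct}): there is a $t=t(d,\varepsilon)>0$ such that $\lct(X,B,|{-(K_X+B)}|_{\bR})\geq t$ for every $(X,B)\in\mathcal X_{d,\varepsilon}$. It is this threshold bound, used together with effective birationality and the bounded family of couples coming from Proposition~\ref{thm:lbb}, that lets one perturb the lc complement $B^+$ into a genuinely klt boundary $B^{++}$ with coefficients in a fixed finite set and $K_X+B^{++}\sim_{\bQ}0$, as carried out in Proposition~\ref{prop:4.3a} and Corollary~\ref{cor:bab}. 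Your sketch omits the lct-threshold ingredient entirely, and there is no known way to make the last step work without it; this is why the theorem is the main result of two papers rather than one. A secondary inaccuracy: effective birationality is not ``morally a consequence of'' having a bounded complement; in Birkar's argument the two are proved by a single intertwined induction, with effective birationality feeding back into the exceptional case of the complement proof.
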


\begin{rem}[Terminal singularities]
  If $X$ has terminal singularities, then $(X,0)$ is $1$-lc.  We refer to
  Section~\ref{sec:singofpairs}, to Birkar's original papers, or to
  \cite[Sect.~3.1]{MR3224718} for the relevant definitions concerning more
  general classes of singularities.
\end{rem}

For his proof of the boundedness of Fano varieties and for his contributions to
the Minimal Model Programme, Caucher Birkar was awarded with the Fields Medal at
the ICM 2018 in Rio de Janeiro.

\subsubsection{Where does boundedness come from?}
\label{ssec:1-1-1v2}

The brief answer is: ``From boundedness of volumes!'' In fact, if
$(X_t, A_t)_{t ∈ T}$ is a family of tuples where the $X_t$ are normal,
projective varieties of fixed dimension $d$ and $A_t ∈ \Div(X_t)$ are very
ample, and if there exists a number $v ∈ ℕ$ such that
$$
\vol(A_t) := \limsup_{n→∞} \frac{d!·h⁰\bigl( X_t,\, 𝒪_{X_t}(n·A_t) \bigr)}{n^d}
< v
$$
for all $t ∈ T$, then elementary arguments using Hilbert schemes show that the
family $(X_t, A_t)_{t ∈ T}$ is bounded.

For the application that we have in mind, the varieties $X_t$ are the Fano
varieties whose boundedness we would like to show and the divisors $A_t$ will be
chosen as fixed multiples of their anticanonical classes.  To obtain boundedness
results in this setting, Birkar needs to show that there exists one number $m$
that makes all $A_t := -m·K_{X_t}$ very ample, or (more modestly) ensures that
the linear systems $|-m·K_{X_t}|$ define birational maps.  Volume bounds for
these divisors need to be established, and the singularities of the linear
systems need to be controlled.

\subsubsection{Earlier results, related results}
\label{ssec:1-1-2v2}

Boundedness results have a long history, which we cannot cover with any pretence
of completeness.  Boundedness of smooth Fano surfaces and threefolds follows
from their classification.  Boundedness of Fano \emph{manifolds} of arbitrary
dimension was shown in the early 1990s, in an influential paper of Kollár,
Miyaoka and Mori, \cite{KMM92}, by studying their geometry as rationally
connected manifolds.  Around the same time, Borisov-Borisov were able to handle
the toric case using combinatorial methods, \cite{MR1166957}.  For (singular)
surfaces, Theorem~\ref{thm:BAB} is due to Alexeev, \cite{MR1298994}.

Among the newer results, we will only mention the work of Hacon-McKernan-Xu.
Using methods that are similar to those discussed here, but without the results
on ``boundedness of complements'' ($→$ Section~\ref{sec:bcomp}), they are able
to bound the volumes of klt pairs $(X, Δ)$, where $X$ is projective of fixed
dimension, $K_X + Δ$ is numerically trivial and the coefficients of $Δ$ come
from a fixed DCC set, \cite[Thm.~B]{MR3224718}.  Boundedness of Fanos with klt
singularities and fixed Cartier index follows, \cite[Cor.~1.8]{MR3224718}.  In a
subsequent paper \cite{MR3507257} these results are extended to give the
boundedness result that we quote in Theorem~\ref{thm:boundednessCriterion}, and
that Birkar builds on.  We conclude with a reference to \cite{Jiang17, Chen18}
for current results involving $K$-stability and $α$-invariants.  The surveys
\cite{MR2827803, MR3821154} give a more complete overview.

\subsubsection{Positive characteristic}

Apart from the above-mentioned results of Alexeev, \cite{MR1298994}, which hold
over algebraically closed field of arbitrary characteristic, little is known in
case where the characteristic of the base field is positive.

\subsection{Applications}
\label{ssec:1-2}

As we will see in Section~\ref{sec:jordan} below, boundedness of Fanos can be
used to prove the existence of fixed points for actions of finite groups on
Fanos, or more generally rationally connected varieties.  Recall that a variety
$X$ is \emph{rationally connected} if every two points are connected by an
irreducible, rational curve contained in $X$.  This allows us to apply
Theorem~\ref{thm:BAB} in the study of finite subgroups of birational
automorphism groups.

\subsubsection{The Jordan property of Cremona groups}

Even before Theorem~\ref{thm:BAB} was known, it had been realised by Prokhorov
and Shramov, \cite{MR3483470}, that boundedness of Fano varieties with terminal
singularities would imply that the birational automorphism groups of projective
spaces (= Cremona groups, $\Bir(ℙ^d)$) satisfy the \emph{Jordan property}.
Recall that a group $Γ$ is said to \emph{have the Jordan property} if there
exists a number $j ∈ ℕ$ such that every finite subgroup $G ⊂ Γ$ contains a
normal, Abelian subgroup $A ⊂ G$ of index $|G:A| ≤ j$.  In fact, a stronger
result holds.

\begin{thm}[\protect{Jordan property of Cremona groups, \cite[Cor.~1.3]{Bir16b}, \cite[Thm.~1.8]{MR3483470}}]\label{thm:jordan}
  Given any number $d ∈ ℕ$, there exists $j ∈ ℕ$ such that for every complex,
  projective, rationally connected variety $X$ of dimension $\dim_{ℂ} X = d$,
  every finite subgroup $G ⊂ \Bir(X)$ contains a normal, Abelian subgroup
  $A ⊆ G$ of index $|G:A| ≤ j$.
\end{thm}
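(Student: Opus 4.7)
The plan is to reduce, via a $G$-equivariant Minimal Model Programme, to the case where $G$ acts biregularly on a Fano variety belonging to the bounded family furnished by Theorem~\ref{thm:BAB}, and there to apply the classical Jordan theorem for linear groups.

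\textbf{Regularisation and $G$-MMP.} Given a finite subgroup $G ⊂ \Bir(X)$, a standard $G$-equivariant resolution of the normalised graph of the birational maps representing the elements of $G$ produces a smooth projective variety $Y$, birational to $X$, on which $G$ acts biregularly. Rational connectedness is a birational invariant, so $Y$ is rationally connected and in particular uniruled. A $G$-equivariant MMP starting from $Y$ therefore terminates with a $G$-equivariant Mori fibre space $f \colon Y' → Z$, where $Y'$ carries $\bQ$-factorial terminal singularities, $-K_{Y'}$ is $f$-ample, and $G$ acts biregularly on both $Y'$ and $Z$.

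\textbf{The Fano case.} If $Z$ is a point, then $Y'$ is a Fano variety with terminal singularities. Theorem~\ref{thm:BAB}, applied with $B=0$ and $ε=1$, places $Y'$ in a bounded family. Consequently there exists an integer $m = m(d)$ such that $-mK_{Y'}$ is very ample and $h⁰(Y', -mK_{Y'})$ is bounded in terms of $d$ alone. The $G$-invariance of $-mK_{Y'}$ produces a $G$-equivariant closed embedding $Y' \into \bP^N$, and hence an embedding $G \into \Aut(Y') \into \PGL_{N+1}(ℂ)$ for $N = N(d)$; the classical Jordan theorem for $\PGL_{N+1}$ then finishes this case.

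\textbf{The fibration case.} If $\dim Z > 0$, we induct on $d$. The base $Z$ is rationally connected of dimension $< d$ and carries a regular $G$-action; the geometric generic fibre $F$ of $f$ is a Fano variety with terminal singularities, rationally connected, of dimension $d - \dim Z < d$, on which the kernel $K$ of $G → \Aut(Z)$ acts. Applying the inductive hypothesis to the image $G/K ⊂ \Bir(Z)$ and, after base change to $\overline{ℂ(Z)}$, to $K ⊂ \Bir(F)$, we obtain normal Abelian subgroups of bounded index in each piece, which we then combine into a normal Abelian subgroup of bounded index in $G$.

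\textbf{Main obstacle.} Steps 1 and 2 are standard applications of the $G$-equivariant MMP, and the Fano case follows cleanly from Theorem~\ref{thm:BAB} once the equivariant embedding into $\PGL_{N+1}$ is in hand. The delicate point is the fibration step: one must assemble the bounded-index normal Abelian subgroups obtained inductively on $Z$ and on $F$ into a bounded-index normal Abelian subgroup of $G$, even though the orders of the groups involved are \emph{a priori} unbounded. The standard remedy is to carry through the induction with a strengthened statement --- a ``strong Jordan property'' that also controls, for instance, the minimum number of generators of the Abelian subgroup --- so that the property is stable under the group extensions arising from the Mori fibration and the induction closes up.
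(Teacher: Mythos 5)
Your regularisation via equivariant resolution and reduction to a $G$-Mori fibre space $Y' \to Z$ is the same opening move as the paper's. Your Fano case (when $Z$ is a point) is also essentially sound, though you should note that getting Jordan for $\PGL_{N+1}(\bC)$ from Jordan for $\GL_{N+1}(\bC)$ requires lifting the finite group to a finite subgroup of $\GL_{N+1}$ (a finite central extension exists because the Schur multiplier is finite); the paper draws precisely this diagram in the proof of its Lemma~\ref{lem:j1}.

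The genuine gap is in the fibration case, and you have put your finger on it yourself without resolving it. The Jordan property is \emph{not} stable under group extensions: given $1 \to K \to G \to Q \to 1$ with both $K$ and $Q$ Jordan, one cannot in general produce a bounded-index normal Abelian subgroup of $G$ by ``combining'' Abelian subgroups of $K$ and $Q$ — the Abelian subgroup of $K$ need not be normal in $G$, and intersecting its $G$-conjugates destroys the index bound, while Heisenberg-type extensions of Abelian by Abelian can be arbitrarily far from Abelian. Your proposed remedy, a ``strong Jordan property'' controlling the number of generators, is the right kind of idea in spirit (Prokhorov–Shramov do establish such a bounded-rank statement), but it is not enough on its own to close the extension problem, you do not supply the group-theoretic lemma that would, and — circularly — the usual way to prove the bounded-rank property for birational automorphisms of rationally connected varieties is itself a fixed-point argument.

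The paper avoids the extension problem entirely by never applying Jordan to the base and fibre separately. Instead it proves (Lemma~\ref{lem:j1} and Proposition~\ref{prop:j2}) that after passing to a subgroup of bounded index, the group acts \emph{with a fixed point} on some smooth biregular model. In the Fano case this comes out of boundedness: the equivariant embedding into $\bP^N$, Jordan's theorem to get an Abelian subgroup $\Phi$, the simultaneous eigenspace decomposition of $\bC^{N+1}$ under $\Phi$ to produce a bounded, $\Phi$-invariant set of points of $X$, and a stabiliser of bounded index. In the fibration case it inducts by finding a fixed point on the lower-dimensional base $Y$, showing that the fibre over that fixed point contains an invariant rationally connected proper subvariety (Hacon–McKernan's rational connectedness plus a relative Kawamata subadjunction), and then transporting that subvariety back along the MMP via Lemma~\ref{lem:xfgs}. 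Only at the very end, after a fixed point on a \emph{smooth} model has been produced, does one linearise at that point to get a \emph{faithful} representation of the bounded-index subgroup in $\GL_d(\bC)$ — faithfulness being the Minkowski argument about Taylor expansions in characteristic zero — and apply Jordan's classical theorem exactly once. This single late application of Jordan is what makes the induction close; your version applies Jordan twice per step and then has to reconcile the outputs, which is precisely the step that does not go through.
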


\begin{rem}
  Theorem~\ref{thm:jordan} answers a question of Serre \cite[6.1]{MR2567402} in
  the positive.  A more detailed analysis establishes the Jordan property more
  generally for all varieties of vanishing irregularity,
  \cite[Thm.~1.8]{MR3292293}.
\end{rem}

Theorem~\ref{thm:jordan} ties in with the general philosophy that finite
subgroups of $\Bir(ℙ^d)$ should in many ways be similar to finite linear groups,
where the property has been established by Jordan more then a century ago.

\begin{thm}[\protect{Jordan property of linear groups, \cite{Jordan1877}}]\label{thm:jordan-lin}
  Given any number $d ∈ ℕ$, there exists $j^{\Jordan}_d ∈ ℕ$ such that every
  finite subgroup $G ⊂ \GL_d(ℂ)$ contains a normal, Abelian subgroup $A ⊆ G$ of
  index $|G:A| ≤ j^{\Jordan}_d$.  \qed
\end{thm}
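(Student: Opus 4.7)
The plan is to reduce to a subgroup of the unitary group, exploit a commutator-shrinking estimate near the identity to produce a nontrivial element with a large centraliser, and then induct on the dimension $d$, the base case $d = 1$ being trivial since $U(1)$ is abelian. First I would invoke Weyl's unitary trick: averaging any Hermitian inner product over the finite group $G$ produces a $G$-invariant form, so after conjugation in $\GL_d(\bC)$ we may assume $G \subset U(d)$. Equipping $M_d(\bC)$ with the operator norm and using the identity
\[
[g,h] - I \;=\; \bigl((g-I)(h-I) - (h-I)(g-I)\bigr)\,g^{-1}h^{-1},
\]
one obtains the central estimate $\|[g,h] - I\| \leq 2\,\|g-I\|\cdot\|h-I\|$ for all $g, h \in U(d)$.

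The decisive step is to pick a nontrivial $g_0 \in G$ minimising $\|g_0 - I\| =: \delta > 0$, and set $V := \{u \in U(d) : \|u - I\| < 1/2\}$. For every $h \in G \cap V$ the estimate forces $\|[h,g_0] - I\| < \delta$, so by minimality of $\delta$ the commutator $[h, g_0]$ must be the identity. Thus the centraliser $Z := Z_G(g_0)$ contains $G \cap V$. A compactness argument, covering $U(d)$ by a bounded number $N(d)$ of translates of a small symmetric subneighbourhood of $V$, shows that no two distinct right cosets of $G \cap V$ in $G$ can lie in the same such translate, whence $[G:Z] \leq [G: G \cap V] \leq N(d)$. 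If $g_0$ is scalar it already lies in the centre and one restarts with any element of $G$ with more than one eigenvalue; otherwise the eigenspace decomposition $\bC^d = V_1 \oplus \cdots \oplus V_k$ with $k \geq 2$ and $\dim V_i =: d_i < d$ is $Z$-stable, embedding $Z$ into $U(d_1) \times \cdots \times U(d_k)$.

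Applying the inductive hypothesis to the image of $Z$ in each factor yields a normal abelian subgroup $A_Z \leq Z$ of index at most $\prod_i j^{\Jordan}_{d_i}$. To descend from $Z$ to $G$ I would pass to the normal core $A := \bigcap_{g \in G} g A_Z g^{-1}$, which is normal in $G$, abelian as a subgroup of $A_Z$, and of index at most $\bigl([G:Z]\cdot[Z:A_Z]\bigr)!$ by Poincaré's theorem on normal cores; this yields an explicit if wasteful recursion for $j^{\Jordan}_d$. The main obstacle is bookkeeping — isolating the scalar case for $g_0$ and tracking the blow-up of the index when extracting a genuinely normal subgroup — rather than any deep argument. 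The substantive content is the commutator estimate of the first paragraph, which converts metric proximity in $U(d)$ into algebraic commutation inside the finite group $G$; this is the clever ingredient of Jordan's original proof.
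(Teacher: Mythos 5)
The paper states this theorem with a citation to Jordan (1877) and a bare \qed, offering no proof of its own, so I am judging your sketch as a free-standing argument. The overall strategy — average an inner product to land in $U(d)$, use the commutator identity $[g,h]-I=(gh-hg)g^{-1}h^{-1}$ to get $\|[g,h]-I\|\le 2\|g-I\|\,\|h-I\|$, pick a nontrivial $g_0$ of minimal distance to $I$ so that $G\cap V$ centralises it, bound $[G:G\cap V]$ by covering the compact group $U(d)$ with translates of a small symmetric neighbourhood, split $\bC^d$ into eigenspaces of $g_0$ and induct on the dimensions of the blocks, then pass to the Poincaré normal core — is precisely the classical Jordan argument, and the compactness step and the normal-core step are both fine as written.

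There is, however, a genuine gap in the clause ``if $g_0$ is scalar\ldots one restarts with any element of $G$ with more than one eigenvalue.'' The conclusion $[h,g_0]=I$ relied on $g_0$ minimising $\|g-I\|$ over \emph{all} nontrivial elements of $G$: the inequality $\|[h,g_0]-I\|<\delta$ contradicts $[h,g_0]\ne I$ only because $\delta$ is that global minimum. If $g_0$ happens to be scalar (hence central) you learn nothing, and if you then switch to some non-scalar $g_1$ the minimality is lost: the estimate only gives $\|[h,g_1]-I\|<\|g_1-I\|$, which is compatible with $[h,g_1]$ being a nontrivial element of intermediate norm — or a nontrivial scalar. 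This is not ``bookkeeping''; a new observation is needed. The standard repair is to minimise $\|g-I\|$ over the \emph{non-scalar} elements $g\in G$ (if there are none, $G$ is abelian and one is done). For $h$ near $I$ the commutator $c:=[h,g_1]$ then satisfies $\|c-I\|<\|g_1-I\|$, so $c$ must be scalar, say $c=\lambda I$. Taking determinants of $hg_1h^{-1}=\lambda g_1$ forces $\lambda^d=1$, hence $\lambda=1$ or $\|c-I\|=|\lambda-1|\ge 2\sin(\pi/d)$; shrinking $V$ to the ball of radius $\tfrac12\sin(\pi/d)$ excludes the latter, so $c=I$ and $G\cap V\subseteq Z_G(g_1)$ with $g_1$ non-scalar, after which your eigenspace decomposition, the induction, and the normal-core step all go through. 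With that one change your proposal is a correct proof.
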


\begin{rem}[Related results]
  For further information on Cremona groups and their subgroups, we refer the
  reader to the surveys \cite{MR3229352, MR3821147} and to the recent research
  paper \cite{Popov18}.  For the maximally connected components of automorphism
  groups of projective varieties (rather than the full group of birational
  automorphisms), the Jordan property has recently been established by Meng and
  Zhang without any assumption on the nature of the varieties,
  \cite[Thm.~1.4]{MZ18}; their proof uses group-theoretic methods rather than
  birational geometry.  For related results (also in positive characteristic),
  see \cite{Hu18, MR3830471, SV18} and references there.
\end{rem}

\subsubsection{Boundedness of finite subgroups in birational transformation groups}

Following similar lines of thought, Prokhorov and Shramov also deduce
boundedness of finite subgroups in birational transformation groups, for
arbitrary varieties defined over a finite field extension of $ℚ$.

\begin{thm}[\protect{Bounds for finite groups of birational transformation, \cite[Thm.~1.4]{MR3292293}}]
  Let $k$ be a finitely generated field over $ℚ$.  Let $X$ be a variety over
  $k$, and let $\Bir(X)$ denote the group of birational automorphisms of $X$
  over $\Spec k$.  Then, there exists $b ∈ ℕ$ such that any finite subgroup
  $G ⊂ \Bir(X)$ has order $|G| ≤ b$.
\end{thm}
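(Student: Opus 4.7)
The plan is to combine a Jordan-type property for $\Bir(X)$ with a Minkowski-type bound on torsion in linear groups, exploiting the arithmetic hypothesis that $k$ is finitely generated over $ℚ$.

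First I would reduce to the biregular case. For any finite subgroup $G ⊂ \Bir(X)$, equivariant resolution of singularities combined with equivariant compactification (both available in characteristic zero) produces a smooth projective $k$-variety $X'$ birational to $X$ on which $G$ acts by automorphisms. It therefore suffices to bound $|G|$ uniformly for finite subgroups $G ⊆ \Aut(X')$ arising in this way.

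Second, I would invoke a generalised Jordan property for $\Aut(X')$. Theorem~\ref{thm:jordan} furnishes such a property when $X$ is rationally connected; for arbitrary $X$ one passes to the MRC fibration $X' \dashrightarrow Y$ and treats the rationally connected general fibre (via Theorem~\ref{thm:jordan}) and the non-uniruled base $Y$ (by induction on dimension combined with the MMP, the Iitaka fibration, and finiteness of $\Aut$ for varieties of general type) separately. The outcome is a normal abelian subgroup $A ⊆ G$ with $[G:A] ≤ j(X)$ depending only on $X$.

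The decisive arithmetic step is to bound $|A|$. After $A$-linearising a sufficiently high power of an ample line bundle on $X'$, one obtains a homomorphism $A → \GL_N(k')$ whose kernel consists of elements acting on sections by scalars, i.e.\ by roots of unity in $k'$, and for which $N$ and $[k':k]$ depend only on $X$. Since $k'$ remains finitely generated over $ℚ$, two classical Minkowski--Serre-type results apply: the group of roots of unity in $k'$ is finite of bounded order, and the finite subgroups of $\GL_N(k')$ have bounded order. In both cases one spreads $k'$ out to a finitely generated $ℤ$-subalgebra $R ⊂ k'$ and reduces modulo a prime of good reduction, so that any finite subgroup of order prime to the residue characteristic embeds into a linear group over a finite field. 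Combining these bounds with the index bound gives $|G| ≤ j(X) · b'(X,k) =: b$, as required.

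The main obstacle is the second step: extending the Jordan property of Theorem~\ref{thm:jordan} beyond the rationally connected case requires controlling $\Bir$ of the base of the MRC fibration, which is delicate and is where most of the geometric content sits. The third step is comparatively routine once a linear representation over $k'$ is available, but the argument would collapse without the hypothesis on $k$: $\GL_N(ℂ)$ contains finite subgroups of arbitrarily large order, so the abelian part $A$ can only be bounded using features special to fields finitely generated over $ℚ$.
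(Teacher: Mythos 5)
The survey does not prove this theorem; it only quotes it from Prokhorov--Shramov \cite{MR3292293}, so there is no argument in the source to compare against. Your sketch nonetheless contains a genuine gap at Step 2.

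You claim a generalised Jordan property for $\Aut(X')$ (equivalently for $\Bir(X)$) by passing to the MRC fibration and treating the rationally connected fibre and the non-uniruled base separately. This intermediate claim is false as a purely geometric statement: Zarhin showed that for any positive-dimensional abelian variety $A$ over $\bC$ the group $\Bir(A \times \bP^1)$ does \emph{not} have the Jordan property, even though $\bP^1$ and $A$ individually do. His counterexamples are theta-group-type extensions built from large torsion of $A$, so the finite subgroups of $\Bir(A \times \bP^1)$ mix contributions from fibre and base in a way that defeats any factor-by-factor analysis of the sequence $1 \to G_{X/Y} \to G \to G_Y \to 1$. Consequently one cannot first establish a geometric Jordan bound over $\bar{k}$ and only afterwards inject the arithmetic to bound the abelian part; the hypothesis that $k$ is finitely generated over $\bQ$ has to enter already in the structural analysis of the finite group $G$, precisely in order to kill the unbounded torsion of abelian varieties that drives Zarhin's construction. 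This is indeed what Prokhorov--Shramov do: boundedness of torsion of abelian varieties and of finite subgroups of $\GL_N(k)$ over such $k$ (the Minkowski--Serre input, which is your Step 3) is used alongside the equivariant MMP reduction, not after it. Your Step 3 is the right arithmetic ingredient, but it cannot repair a Step 2 whose conclusion fails over $\bC$.
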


As an immediate corollary, they answer another question of
Serre\footnote{Unpublished problem list from the workshop ``Subgroups of Cremona
  groups: classification'', 29–30 March 2010, ICMS, Edinburgh.  Available at
  \url{http://www.mi.ras.ru/~prokhoro/preprints/edi.pdf}.  Serre's question is
  found on page 7.}, pertaining to finite subgroups in the automorphism group of
a field.

\begin{cor}[\protect{Boundedness for finite groups of field automorphisms, \cite[Cor.~1.5]{MR3292293}}]
  Let $k$ be a finitely generated field over $ℚ$.  Then, there exists
  $b ∈ ℕ$ such that any finite subgroup $G ⊂ \Aut(k)$ has order
  $|G| ≤ b$.
\end{cor}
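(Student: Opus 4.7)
The plan is to realise the field $k$ from the corollary as the function field of a variety $X$ defined over $\bQ$, and then to invoke the preceding Theorem \emph{with base field $\bQ$} (which is trivially finitely generated over $\bQ$). Once this identification is in place, the bound on orders of finite subgroups of $\Bir(X)$ furnished by the Theorem immediately gives the desired bound on $|\Aut(k)|$.

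The two ingredients are standard. First, any field automorphism fixes the prime subring $\bZ$ and hence the prime field $\bQ$, so $\Aut(k) = \Aut_{\bQ}(k)$; in particular no ``$\bQ$-linearity'' hypothesis is needed on the automorphisms being bounded. Second, since $k$ is finitely generated over $\bQ$, I would choose a transcendence basis and invoke the primitive element theorem (we are in characteristic zero) to present $k$ as the fraction field of some finitely generated $\bQ$-algebra $A$; then $X := \Spec A$ is an affine $\bQ$-variety with function field $k$. The standard correspondence between finitely generated field extensions of $\bQ$ and $\bQ$-varieties up to birational equivalence yields a group isomorphism $\Aut_{\bQ}(k) \cong \Bir(X)$, after composing the contravariant pullback with the group-theoretic inversion to absorb the reversal of composition.

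Applying the preceding Theorem to this $X$, viewed as a variety over the base field $\bQ$, produces a constant $b \in \bN$ such that every finite subgroup of $\Bir(X)$ has order at most $b$; transporting the bound across the isomorphism above gives exactly the statement of the corollary. I foresee no substantive obstacle in this deduction: the geometric content sits entirely in the preceding Theorem, and the corollary is a matter of dictionary translation between field automorphisms and birational self-maps of a chosen birational model. The only small things to verify are the two ingredients just mentioned, both of which are standard.
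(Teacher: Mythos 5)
Your argument is correct and is exactly the ``immediate'' deduction the paper has in mind (the paper gives no explicit proof, describing the corollary as immediate from the theorem): observe $\Aut(k)=\Aut_{\bQ}(k)$ since every field automorphism fixes the prime field, realise $k$ as the function field of a $\bQ$-variety $X$, identify $\Aut_{\bQ}(k)$ with $\Bir(X)$ via the standard (anti-)isomorphism, and apply the preceding theorem with base field $\bQ$.
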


\subsubsection{Boundedness of links, quotients of the Cremona group}

Birkar's result has further applications within birational geometry.  Combined
with work of Choi-Shokurov, it implies the boundedness of Sarkisov links in any
given dimension, cf.~\cite[Cor.~7.1]{MR2784026}.  In \cite{BLZ},
Blanc-Lamy-Zimmermann use Birkar's result to prove the existence of many
quotients of the Cremona groups of dimension three or more. In particular, they
show that these groups are not perfect and thus not simple.

\subsection{Outline of this paper}

Paraphrasing \cite[p.~6]{Bir16a}, the main tools used in Birkar's work include
the Minimal Model Programme \cite{KM98, BCHM10}, the theory of complements
\cite{MR1892905, MR2448282, MR1794169}, the technique of creating families of
non-klt centres using volumes \cite{MR3224718, MR3034294} and
\cite[Sect.~6]{KollarSingsOfPairs}, and the theory of generalised polarised
pairs \cite{MR3502099}.  In fact, given the scope and difficulty of Birkar's
work, and given the large number of technical concepts involved, it does not
seem realistic to give more than a panoramic presentation of Birkar's proof
here.  Largely ignoring all technicalities,
Sections~\ref{sec:bcomp}--\ref{sec:lcthres} highlight four core results, each of
independent interest.  We explain the statements in brief, sketch some ideas of
proof and indicate how the results might fit together to give the desired
boundedness result.  Finally, Section~\ref{sec:jordan} discusses the application
to the Jordan property in some detail.

\subsection{Acknowledgements}

The author would like to thank Florin Ambro, Serge Cantat, Enrica Floris,
Christopher Hacon, Vladimir Lazić, Benjamin McDonnell, Vladimir Popov, Thomas
Preu, Yuri Prokhorov, Vyacheslav Shokurov, Chenyang Xu and one anonymous reader,
who answered my questions and/or suggested improvements.  Yanning Xu was kind
enough to visit Freiburg and patiently explain large parts of the material to
me.  He helped me out more than just once.  His paper \cite{YXu18}, which
summarises Birkar's results, has been helpful in preparing these notes.  Even
though our point of view is perhaps a little different, it goes without saying
that this paper has substantial overlap with Birkar's own survey \cite{Bir18}.

%
%
\svnid{$Id: 02-notation.tex 79 2019-01-07 13:21:51Z kebekus $}

\section{Notation, standard facts and known results}
\subversionInfo

\subsection{Varieties, divisors and pairs}

We follow standard conventions concerning varieties, divisors and pairs.  In
particular, the following notation will be used.

\begin{defn}[Round-up, round-down and fractional part]
  If $X$ is a normal, quasi-projective variety and $B ∈ ℝ\Div(X)$ an
  $ℝ$-divisor on $X$, we write $⌊ B ⌋$, $⌈ B ⌉$ for the
  round-down and round-up of $B$, respectively.  The divisor
  $\{B\} := B - ⌊ B ⌋$ is called \emph{fractional part of $B$}.
\end{defn}

\begin{defn}[Pair]
  A \emph{pair} is a tuple $(X, B)$ consisting of a normal, quasi-projective
  variety $X$ and an effective $ℝ$-divisor $B$ such that $K_X + B$ is
  $ℝ$-Cartier.
\end{defn}

\begin{defn}[Couple]
  A \emph{couple} is a tuple $(X, B)$ consisting of a normal, projective variety
  $X$ and a divisor $B ∈ \Div(X)$ whose coefficients are all equal to one.
  The couple is called \emph{log-smooth} if $X$ is smooth and if $B$ has simple
  normal crossings support.
\end{defn}

\subsection{$ℝ$-divisors}

While divisors with real coefficients had sporadically appeared in birational
geometry for a long time, the importance of allowing real (rather than rational)
coefficients was highlighted in the seminal paper \cite{BCHM10}, where
continuity- and compactness arguments for spaces of divisors were used in an
essential manner.  Almost all standard definitions for divisors have analogues
for $ℝ$-divisors, but the generalised definitions are perhaps not always
obvious.  For the reader's convenience, we recall a few of the more important
notions here.

\begin{defn}[Big $ℝ$-divisors]
  Let $X$ be a normal, projective variety.  A divisor $B ∈ ℝ\Div(X)$, which need
  not be $ℝ$-Cartier, is called \emph{big} if there exists an an ample
  $H ∈ ℝ\Div(X)$, and effective $D ∈ ℝ\Div(X)$ and an $ℝ$-linear equivalence
  $B \sim_ℝ H + D$.
\end{defn}

\begin{defn}[Volume of an $ℝ$-divisor]
  Let $X$ be a normal, projective variety of dimension $d$.  The \emph{volume}
  of an $ℝ$-divisor $D ∈ ℝ\Div(X)$ is defined as
  $$
  \vol(D) := \limsup_{m→∞} \frac{d!·h⁰\bigl( X,\, 𝒪_X(⌊mD⌋) \bigr)}{m^d}.
  $$
\end{defn}

\begin{defn}[Linear system]
  Let $X$ be a normal, quasi-projective variety and let $M ∈ ℝ\Div(X)$.  The
  \emph{$ℝ$-linear system} $|M|$ is defined as
  $$
  |M|_ℝ := \{ D ∈ ℝ\Div(X) \,|\, D \text{ is effective and } D \sim_{ℝ} M \}.
  $$
\end{defn}

\subsection{Invariants of varieties and pairs}
\label{sec:singofpairs}

We briefly recall a number of standard definitions concerning singularities.  In
brief, if $X$ is smooth, and if $π : \wtilde{X} → X$ is any birational morphism,
where $\wtilde{X}$ it smooth, then any top-form $σ ∈ H⁰ \bigl( X,\, ω_X \bigr)$
pulls back to a holomorphic differential form
$τ ∈ H⁰ \bigl( \wtilde{X},\, ω_{\wtilde{X}} \bigr)$, with zeros along the
positive-dimensional fibres of $π$.  However, if $X$ is singular, if
$π : \wtilde{X} → X$ is a resolution of singularities and if
$σ ∈ H⁰ \bigl( X,\, ω_X \bigr)$ is any section in the (pre-)dualising sheaf,
then the pull-back of $σ$ will only be a rational differential form on
$\wtilde{X}$ which might well have poles along the positive-dimensional fibres
of $π$.  The idea in the definition of ``log discrepancy'' is to use this pole
order to measure the ``badness'' of the singularities on $X$.  We refer the
reader to one of the standard references \cite[Sect.~2.3]{KM98} and
\cite[Sect.~2]{MR3057950} for an-depth discussion of these ideas and of the
singularities of the Minimal Model Programme.  Since the notation is not uniform
across the literature\footnote{The papers \cite{Bir16a, Bir16b, BCHM10} denote
  the log discrepancy by $a(D, X, B)$, while the standard reference books
  \cite{KM98, MR3057950} write $a(D, X, B)$ for the standard (= ``non-log'')
  discrepancies.}, we spend a few lines to fix notation and briefly recall the
central definitions of the field.

\begin{defn}[Log discrepancy]\label{not:logdiscrep}
  Let $(X,B)$ a pair and let $π : \wtilde{X} → X$ be a log resolution of
  singularities, with exceptional divisors $(E_i)_{1 ≤ i ≤ n}$.  Since $K_X+B$
  is $ℝ$-Cartier by assumptions, there exists a well-defined notion of
  pull-back, and a unique divisor $B_{\wtilde{X}} ∈ ℝ\Div(\wtilde{X})$ such that
  $K_{\wtilde{X}} + B_{\wtilde{X}} = π^* (K_X+B)$ in $ℝ\Div(\wtilde{X})$.  If
  $D$ is any prime divisor on~$\wtilde{X}$, we consider the \emph{log
    discrepancy}
  $$
  a_{\log}(D, X, B) := 1 - \mult_D B_{\wtilde{X}}.
  $$
  The infimum over all such numbers,
  $$
  a_{\log}(X, B) :=
  \inf \{ a_{\log}(D, X, B) \:|\: π:\wtilde{X} → X \text{ a log resolution and } D ∈ \Div(\wtilde{X}) \text{ prime}\}
  $$
  is called the \emph{total log discrepancy of the pair $(X,B)$}.
\end{defn}

The total log discrepancy measures how bad the singularities are: the smaller
$a_{\log}(X, B)$ is, the worse the singularities are.  Table~\vref{tab:x1} lists
the classes of singularities will be relevant in the sequel.  In addition,
$(X, B)$ is called \emph{plt} if $a_{\log}(D, X, B) > 0$ for every resolution
$π : \wtilde{X} → X$ and every \emph{exceptional} divisor $D$ on~$\wtilde{X}$.
The class of $ε$-lc singularities, which is perhaps the most relevant for our
purposes, was introduced by Alexeev.

\begin{table}
  \centering
  \begin{tabular}{ccc}
    \rowcolor{gray!20} If …, then & & $(X,B)$ is called ``…'' \\ \hline
    $a_{\log}(X, B) ≥ 0$ & … & \emph{log canonical (or ``lc'')} \\
    $a_{\log}(X, B) > 0$ & … & \emph{Kawamata log terminal (or ``klt'')} \\
    $a_{\log}(X, B) ≥ ε$ & … & \emph{$ε$-log canonical (or ``$ε$-lc'')} \\
    $a_{\log}(X, B) ≥ 1$ & … & \emph{canonical} \\
    $a_{\log}(X, B) > 1$ & … & \emph{terminal}
  \end{tabular}
  \bigskip
  \caption{Singularities of the Minimal Model Programme}
  \label{tab:x1}
\end{table}

\subsubsection{Places and centres}

The divisors $D$ that appear in the definition log discrepancy deserve special
attention, in particular if $a_{\log}(D, X, B) ≤ 0$.

\begin{defn}[Non-klt places and centres]
  Let $(X,B)$ a pair.  A \emph{non-klt place} of $(X, B)$ is a prime divisor $D$
  on birational models of $X$ such that $a_{\log}(D, X, B) ≤ 0$.  A
  \emph{non-klt centre} is the image on $X$ of a non-klt place.  When $(X, B)$ is
  lc, a non-klt centre is also called an \emph{lc centre}.
\end{defn}

\subsubsection{Thresholds}

Suppose that $(X,B)$ is a klt pair, and that $D$ is an effective divisor on $X$.
The pair $(X, B+t·D)$ will then be log-canonical for sufficiently small numbers
$t$, but cannot be klt when $t$ is large.  The critical value of $t$ is called
the \emph{log-canonical threshold}.

\begin{defn}[\protect{LC threshold, compare \cite[Sect.~9.3.B]{Laz04-II}}]\label{def:lct}
  Let $(X,B)$ be a klt pair.  If $D ∈ ℝ\Div(X)$ is effective, one defines the
  \emph{lc threshold of $D$ with respect to $(X,B)$} as
  \begin{align*}
    \lct \bigl(X,\, B,\, D \bigr) & := \sup \bigl\{ t ∈ ℝ \:\bigl|\: (X,B+t·D) \text{ is lc} \bigr\}.  \\
    \intertext{If $Δ ∈ ℝ\Div(X)$ is $ℝ$-Cartier with non-empty $ℝ$-linear system (but not
    necessarily effective itself), one defines \emph{lc threshold of $|Δ|_{ℝ}$
    with respect to $(X,B)$} as}
    \lct \bigl(X,\, B,\, |Δ|_ℝ \bigr) & := \inf \bigl\{ \lct(X,B,D) \:\bigl|\: D ∈ |Δ|_ℝ \bigr\}.
  \end{align*}
\end{defn}

\begin{rem}\label{rem:sflct}
  In the setting of Definition~\ref{def:lct}, it is a standard fact that
  $$
  \lct \bigl(X,\, B,\, |Δ|_ℝ \bigr) = \sup \bigl\{ t ∈ ℝ \:\bigl|\: (X, B+t·D) \text{ is lc for every } D ∈ |Δ|_{ℝ} \bigr\}.
  $$
  In particular, if $(X,B)$ is klt, then $(X, B+t'·D)$ is lc for every
  $D ∈ |Δ|_{ℝ}$ and every $0 < t' < t$.
\end{rem}

\begin{notation}
  If $B = 0$, we omit it from the notation and write
  $\lct \bigl(X,\, |Δ|_ℝ \bigr)$ and $\lct \bigl(X,\, D \bigr)$ in short.
\end{notation}

\subsection{Fano varieties and pairs}

Fano varieties come in many variants.  For the purposes of this overview, the
following classes of varieties will be the most relevant.

\begin{defn}[\protect{Fano and weak log Fano pairs, \cite[Sect.~2.10]{Bir16a}}]
  ~
  \begin{itemize}
  \item A projective pair $(X, B)$ is called \emph{log Fano} if $(X, B)$ is lc
    and if $-(K_X+B)$ is ample.  If $B = 0$, we just say that $X$ is Fano.

  \item A projective pair $(X, B)$ is called is called \emph{weak log Fano} if
    $(X, B)$ is lc and $-(K_X + B)$ is nef and big.  If $B = 0$, we just say
    that $X$ is weak Fano.
  \end{itemize}
\end{defn}

\begin{rem}[Relative notions]
  There exist relative versions of the notions discussed above.  If $(X, B)$ is
  any quasi-projective pair, if $Z$ is normal and if $X → Z$ is surjective,
  projective and with connected fibres, we say $(X, B)$ is log Fano over $Z$ if
  it is lc and if $-(K_X + B)$ is relatively ample over $Z$.  Ditto with ``weak
  log Fano''.
\end{rem}

\subsection{Varieties of Fano type}

Varieties $X$ that \emph{admit} a boundary $B$ that makes $(X,B)$ a Fano pair
are said to be of \emph{Fano type}.  This notion was introduced by Prokhorov and
Shokurov in \cite{MR2448282}.  We refer to that paper for basic properties of
varieties of Fano type.

\begin{defn}[\protect{Varieties of Fano type, \cite[Lem.\ and Def.~2.6]{MR2448282}}]
  A normal, projective variety $X$ is said to be \emph{of Fano type} if there
  exists an effective, $ℚ$-divisor $B$ such that $(X,B)$ is klt and weak log
  Fano pair.  Equivalently: there exists a big $ℚ$-divisor $B$ such that
  $K_X + B \sim_ℚ 0$ and such that $(X, B)$ is a klt pair.
\end{defn}

\begin{rem}[Varieties of Fano type are Mori dream spaces]\label{rem:MoriDream}
  If $X$ is of Fano type, recall from \cite[Sect.~1.3]{BCHM10} that $X$ is a
  ``Mori dream space''.  Given any $ℝ$-Cartier divisor $D ∈ ℝ\Div(X)$, we can
  then run the $D$-Minimal Model Programme and obtain a sequence of extremal
  contractions and flips, $X \dasharrow Y$.  If the push-forward of $D_Y$ of $Y$
  is nef over, we call $Y$ a minimal model for $D$.  Otherwise, there exists a
  $D_Y$-negative extremal contraction $Y → T$ with $\dim Y > \dim T$, and we
  call $Y$ a Mori fibre space for $D$.
\end{rem}

\begin{rem}[Relative notions]
  As before, there exists an obvious relative version of the notion ``Fano
  type''.  Remark~\ref{rem:MoriDream} generalises to this relative setting.
\end{rem}

Varieties of Fano type come in two flavours that often need to be treated
differently.  The following notion, which we recall for later use, has been
introduced by Shokurov.

\begin{defn}[Exceptional and non-exceptional pairs]\label{def:exceptional}
  Let $(X, B)$ be a projective pair, and assume that there exists an effective
  $P ∈ ℝ\Div(X)$ such that $K_X + B + P \sim_{ℝ} 0$.  We say $(X, B)$ is
  \emph{non-exceptional} if we can choose $P$ so that $(X, B+P)$ is \emph{not}
  klt.  We say that $(X, B)$ is \emph{exceptional} if $(X, B+P)$ is klt for
  every choice of $P$.
\end{defn}

%
%
\svnid{$Id: 03-genlPairs.tex 78 2019-01-07 12:48:44Z kebekus $}

\section{b-Divisors and generalised pairs}
\subversionInfo

In addition to the classical notions for singularities of pairs that we recalled
in Section~\ref{sec:singofpairs} above, much of Birkar's work uses the notion of
\emph{generalised polarised pairs}.  The additional flexibility of this notion
allows for inductive proofs, but adds substantial technical difficulties.
Generalised pairs were introduced by Birkar and Zhang in \cite{MR3502099}.

\subsubsection*{Disclaimer}

The notion of generalised polarised pairs features prominently in Birkar's work,
and should be presented in an adequate manner.  The technical complications
arising from this notion are however substantial and cannot be explained within
a few pages.  As a compromise, this section briefly explains what generalised
pairs are, and how they come about in relevant settings.  Section~\ref{ssec:BC}
pinpoints one place in Birkar's inductive scheme of proof where generalised
pairs appear naturally, and explains why most (if not all) of the material
presented in this survey should in fact be formulated and proven for generalised
pairs.  For the purpose of exposition, we will however ignore this difficulty and
discuss the classical case only.

\subsection{Definition of generalised pairs}

To begin, we only recall a minimal subset of the relevant definitions, and refer
to \cite[Sect.~2]{Bir16a} and to \cite[Sect.~4]{MR3502099} for more details.  We
start with the notion of b-divisors, as introduced by Shokurov in
\cite{MR1420223}, in the simplest case.

\begin{defn}[b-divisor]\label{def:1v2}
  Let $X$ be a variety.  We consider projective, birational morphisms $Y → X$
  from normal varieties $Y$, and for each $Y$ a divisor $M_Y ∈ ℝ\Div(Y)$.  The
  collection $M := (M_Y)_Y$ is called \emph{$b$-divisor} if for any morphism
  $f : Y' → Y$ of birational models over $X$, we have $M_Y = f_* (M_{Y'})$.
\end{defn}

\begin{defn}[b-$ℝ$-Cartier and b-Cartier b-divisors]
  Setting as in Definition~\ref{def:1v2}.  A b-divisor $M$ is called
  \emph{b-$ℝ$-Cartier} if there exists one $Y$ such that $M_Y$ is $ℝ$-Cartier
  and such that for any morphism $f : Y' → Y$ of birational models over $X$, we
  have $M_{Y'} = f^* (M_Y)$.  Ditto for \emph{b-Cartier} b-divisors.
\end{defn}

\begin{defn}[\protect{Generalised polarised pair, \cite[Sect.~2.13]{Bir16a}, \cite[Def.~1.4]{MR3502099}}]\label{def:gpp}
  Let $Z$ be a variety.  A \emph{generalised polarised pair over $Z$} is a tuple
  consisting of the following data:
  \begin{enumerate}
  \item a normal variety $X$ equipped with a projective morphism $X → Z$,
  \item an effective $ℝ$-divisor $B ∈ ℝ\Div(X)$, and
  \item a b-$ℝ$-Cartier b-divisor over $X$ represented as $(φ: X' → X, M')$,
    where $M' ∈ ℝ\Div(X')$ is nef over $Z$, and where $K_X + B + φ_* M'$ is
    $ℝ$-Cartier.
  \end{enumerate}
\end{defn}

\begin{notation}[Generalised polarised pair]
  In the setup of Definition~\ref{def:gpp}, we usually write $M := φ_* M'$ and
  say that $(X, B+M)$ is a generalised pair with data $X' \overset{φ}{→} X → Z$
  and $M'$.  In contexts where $Z$ is not relevant, we usually drop it from the
  notation: in this case one can just assume $X → Z$ is the identity.  When $Z$
  is a point we also drop it but say the pair is projective.
\end{notation}

\begin{obs}
  Following \cite[p.~286]{MR3502099} we remark that Definition~\ref{def:gpp} is
  flexible with respect to $X'$ and $M'$.  To be more precise, if $g : X'' → X'$
  is a projective birational morphism from a normal variety, then there is no
  harm in replacing $X'$ with $X''$ and replacing $M'$ with $g^* M'$.
\end{obs}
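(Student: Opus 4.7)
The plan is to verify, one at a time, that the three defining conditions of Definition~\ref{def:gpp} are preserved when the resolution datum $(X' \overset{\varphi}{\to} X,\, M')$ is replaced by $(X'' \overset{\varphi \circ g}{\to} X,\, g^* M')$, and that the underlying b-$\mathbb{R}$-Cartier b-divisor is unchanged. Since $X \to Z$ and $B$ play no role, only item (3) of the definition requires attention. Concretely, I need to see (i) that $g^* M'$ is nef over $Z$, (ii) that the pushforward $M = \varphi_* M'$ on $X$ is unchanged, and (iii) that $(X', M')$ and $(X'', g^* M')$ represent the same b-divisor in the sense of Definition~\ref{def:1v2}.

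For (i), let $C \subset X''$ be a curve contracted to a point in $Z$. The projection formula gives $(g^* M') \cdot C = M' \cdot g_* C$. Either $g$ contracts $C$, so $g_* C = 0$, or $g_* C$ is a curve in $X'$ which, like $C$, is contracted to a point in $Z$; in either case, nefness of $M'$ over $Z$ yields $(g^* M') \cdot C \geq 0$. For (ii), recall that $g : X'' \to X'$ is a birational morphism between normal varieties, hence an isomorphism over an open subset of $X'$ whose complement has codimension $\geq 2$; on this open set $g_* g^* M'$ and $M'$ visibly coincide, and as both are divisors they agree globally. Consequently
$$
(\varphi \circ g)_* (g^* M') = \varphi_* (g_* g^* M') = \varphi_* M' = M,
$$
so the pushed-forward divisor $M$ on $X$, and hence the $\mathbb{R}$-Cartier class of $K_X + B + M$, are unaffected.

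For (iii), I would chase Definition~\ref{def:1v2}. The datum $(X', M')$ defines a b-divisor whose trace on any birational model $Y \to X$ dominating $X'$ via some $h : Y \to X'$ equals $h^* M'$, the values on other models of $X$ being determined from this by pushforward. With the new datum $(X'', g^* M')$, any model $Y \to X$ dominating $X''$ via $h' : Y \to X''$ also dominates $X'$ via $g \circ h'$, and functoriality of pullback gives $(g \circ h')^* M' = h'^* (g^* M')$. Thus both representatives produce the same trace on every model dominating $X''$, and consequently the same b-divisor on all of $X$.

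I do not expect any serious obstacle: the content of the statement is essentially tautological once one unwinds the definitions, since Definition~\ref{def:gpp} is designed to depend only on the b-divisor attached to $(X', M')$ rather than on the chosen representative. The one property that is not manifestly birationally invariant is the nefness of $M'$ itself, which sits at the fixed model $X'$; the entire point is that this property is preserved under pullback by projective morphisms, which is exactly what is needed to pass from $X'$ to a higher model $X''$.
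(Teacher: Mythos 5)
Your verification is correct and matches what the paper leaves implicit: the paper states this as a remark and cites Birkar--Zhang without giving an argument, and the natural proof is exactly your unwinding of the definitions (pullback of a relatively nef $\mathbb{R}$-Cartier divisor is relatively nef, $g_* g^* M' = M'$ by normality and codimension of the exceptional locus, pushforward to $X$ is therefore unchanged, and the associated b-divisor is the same). Nothing is missing.
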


\subsection{Singularities of generalised pairs}

All notions introduced in Section~\ref{sec:singofpairs} have analogues in the
setting of generalised pairs.  Again, we cover only the most basic definition
here.

\begin{defn}[Generalised log discrepancy, singularity classes]
  Consider a generalised polarised pair $(X, B+M)$ with data
  $X' \overset{φ}{→} X → Z$ and $M'$, where $φ$ is a log resolution of $(X, B)$.
  Then, there exists a uniquely determined divisor $B'$ on $X'$ such that
  $$
  K_{X'} + B' + M' = φ^* (K_X + B + M)
  $$
  If $D ∈ \Div(X')$ is any prime divisor, the \emph{generalised log discrepancy}
  is defined to be
  $$
  a_{\log}(D, X, B+M) := 1 - \mult_D B'.
  $$
  As before, we define the \emph{generalised total log discrepancy}
  $a_{\log}(X, B+M)$ by taking the infimum over all $D$ and all resolutions.  In
  analogy to the definitions of Table~\ref{tab:x1}, we say that the generalised
  polarised pair is \emph{generalised lc} if $a_{\log}(X, B+M) ≥ 0$.  Ditto for
  all the other definitions.
\end{defn}

\subsection{Example: Fibrations and the canonical bundle formula}
\label{ssec:gpfib}

We discuss a setting where generalised pairs appear naturally.  Let $Y$ be a
normal pair variety, and let $f : Y → X$ be a fibration: the space $X$ is
projective, normal and of positive dimension, the morphism $f$ is surjective
with connected fibres.  Also, assume that $K_Y$ is $ℚ$-linearly equivalent to
zero over $X$, so that there exists $L_X ∈ ℚ\Div(X)$ with $K_Y \sim_ℚ f^* L_X$.
Ideally, one might hope that it would be possible to choose $L_X = K_X$, but
this is almost always wrong --- compare Kodaira's formula for the canonical
bundle of an elliptic fibration, \cite[Sect.~V.12]{HBPV}.  To fix this issue, we
define a first correction term $B ∈ ℚ\Div(X)$ as
$$
B := \sum_{\mathclap{\substack{D ∈ \Div(X)\\\text{prime}}}} (1-t_D)·D \quad
\text{where} \quad t_D := \lct° \bigl(Y,\, Δ_Y,\, f^*D \bigr)
$$
The symbol $\lct°$ denotes a variant of the lc threshold introduced in
Definition~\ref{def:lct}, which measures the singularities of
$\bigl(Y, f^*D \bigr)$ only over the generic point of $D$.  Since $X$ is smooth
in codimension one, this also solves the problem of defining $f^* D$.  Finally,
one chooses $M ∈ ℚ\Div(X)$ such that $K_X + B + M$ is $ℚ$-Cartier and such that
the desired $ℚ$-linear equivalence holds,
$$
K_Y \sim_ℚ f^* ( K_X + B + M ).
$$
The divisor $B$ is usually called the ``discriminant part'' of the correction
term.  It detects singularities of the fibration, such as multiple or otherwise
singular fibres, over codimension one points of $X$.  The divisor $M$ is called
the ``moduli part''.  It is harder to describe.  While we have defined it only
up to $ℚ$-linear equivalence, a more involved construction can be used to define
it as an honest divisor.

\begin{com*}
  Conjecturally, the moduli part carries information on the birational variation
  of the fibres of $f$, \cite{MR1646046}.  We refer to \cite{MR2359346} and to
  the introduction of the recent research paper \cite{FL18} for an overview, but
  see also \cite{MR3329677}.
\end{com*}

\subsubsection{Behaviour under birational modifications}

We ask how the moduli part of the correction term behaves under birational
modification.  To this end, let $φ : X' → X$ be a birational morphism of normal,
projective varieties.  Choosing a resolution $Y'$ of $Y ⨯_X X'$, we find a
diagram as follows,
$$
\xymatrix{%
  Y' \ar[rr]^{Φ \text{, birational}} \ar[d]_{f'} && Y \ar[d]^f \\
  X' \ar[rr]_{φ \text{, birational}} && X.  }
$$
Set $Δ_{Y'} := Φ^* K_Y - K_{Y'}$.  Generalising the definition of $\lct°$ a
little to allow for negative coefficients in $Δ_{Y'}$, one can then define $B'$
similarly to the construction above,
$$
B' := \sum_{\mathclap{\substack{D ∈ \Div(X')\\\text{prime}}}} (1-t'_D)·D \quad
\text{where} \quad t'_D := \lct° \bigl(Y',\, Δ_{Y'},\, (f')^*D \bigr).
$$
Finally, one may then choose $M' ∈ ℚ\Div(X')$ such that
\begin{align*}
  K_{Y'} + Δ_{Y'} & \sim_ℚ (f')^* (K_{X'} + B' + M' ), \\
  K_{X'} + B' + M' & = φ^*(K_X+B+M)
\end{align*}
and $B = φ_* B'$ as well as $M = φ_* M'$.

\subsubsection{Relation to generalised pairs}

Now assume that $Y$ is lc.  The divisor $B$ will then be effective.  However,
much more is true: after passing to a certain birational model $X'$ of $X$, the
divisor $M_{X'}$ is nef and for any higher birational model $X'' → X'$, the
induced $M_{X''}$ on $X''$ is the pullback of $M_{X''}$, \cite{MR1646046,
  MR2153078, MR2359346} and summarised in \cite[Thm.~3.6]{Bir16a}.  In other
words, going to a sufficiently high birational model of $X'$ of $X$, the moduli
parts $M'$ define an b-$ℝ$-Cartier b-divisor.  Moreover, this b-divisor is
b-nef.  We obtain a generalised polarised pair $(X, B+M)$ with data
$X' \overset{φ}{→} X → \Spec ℂ$ and $M'$.  This generalised pair is generalised
lc by definition.

\begin{com*}
  A famous conjecture of Prokhorov and Shokurov \cite[Conj.~7.13]{MR2448282}
  asserts that the moduli divisor $M_{X''}$ is semiample, on any sufficiently
  high birational model $X''$ of $X$.  More precisely, it is expected that a
  number $m$ exists that depends only on the general fibre of $f$ such that all
  divisors $m·M_{X''}$ are basepoint free.  If this conjecture was solved, it is
  conceivable that Birkar’s work could perhaps be rewritten in a manner that
  avoids the notion of generalised pairs.
\end{com*}

\begin{rem}[Outlook]
  The construction outlined in this section is used in the proof of
  ``Boundedness of complements'', as sketched in Section~\ref{ssec:BC} below.
  It generalises fairly directly to pairs $(Y, Δ_Y)$, and even to tuples where
  $Δ_Y$ is not necessarily effective, \cite[Sect.~3.4]{Bir16a}.
\end{rem}

%
%
\svnid{$Id: 04-boundedness-of-complements.tex 82 2019-01-08 08:34:53Z kebekus $}

\section{Boundedness of complements}
\label{sec:bcomp}
\subversionInfo

\subsection{Statement of result}

One of the central concepts in Birkar's papers \cite{Bir16a, Bir16b} is that of
a \emph{complement}.  The notion of a ``complement'' is an ingenious concept of
Shokurov that was introduced in his investigation of threefold flips,
\cite[Sect.~5]{zbMATH00146455}.  We recall the definition in brief.

\begin{defn}[\protect{Complement, \cite[Sect.~2.18]{Bir16a}}]\label{defn:comple}
  Let $(X, B)$ be a projective pair and $m ∈ ℕ$.  An \emph{$m$-complement of
    $K_X + B$} is a $ℚ$-divisor $B^+$ with the following properties.
  \begin{enumerate}
  \item\label{il:c1} The tuple $(X, B^+)$ is an lc pair.
  \item\label{il:c2} The divisor $m·(K_X + B^+)$ is linearly equivalent to $0$.
    In particular, $m·B^+$ is integral.
  \item\label{il:c3} We have $m·B^+ ≥ m·⌊B⌋ +⌊(m+1)·\{B\}⌋$.
  \end{enumerate}
\end{defn}

\begin{rem}[Complements give sections]\label{rem:3-2}
  Setting as in Definition~\ref{defn:comple}.  If $m$ can be chosen such that
  $m·⌊B⌋ +⌊(m+1)·\{B\}⌋ ≥ m·B$, then Item~\ref{il:c2} guarantees that $-m·(K_X+B)$
  is linearly equivalent to the effective divisor $m·(B^+-B)$.  In particular,
  the sheaf $𝒪_X\bigl(-m·(K_X+B)\bigr)$ admits a global section.
\end{rem}

\begin{rem}
  In view of Item~\ref{il:c2}, Shokurov considers complements as divisors that
  make the lc pair $(X, B^+)$ ``Calabi-Yau'', hence ``flat''.
\end{rem}

The following result, which asserts the existence of complements with bounded
$m$, is one of the core results in Birkar's paper \cite{Bir16a}.  A proof of
Theorem~\ref{thm:boundOfCompl} is sketched in Section~\vref{ssec:BC}.

\begin{thm}[\protect{Boundedness of complements, \cite[Thm.~1.7]{Bir16a}}]\label{thm:boundOfCompl}
  Given $d ∈ ℕ$ and a finite set $\mathcal{R} ⊂ [0,1] ∩ ℚ$, there exists $m ∈ ℕ$
  with the following property.  If $(X,B)$ is any log canonical, projective
  pair, where
  \begin{enumerate}
  \item\label{il:r1} $X$ is of Fano type and $\dim X = d$,
  \item the coefficients of $B$ are of the form $\frac{l-r}{l}$, for
    $r ∈ \mathcal{R}$ and $l ∈ ℕ$,
  \item $-(K_X+B)$ is nef,
  \end{enumerate}
  then there exists an $m$-complement $B^+$ of $K_X+B$ that satisfies
  $B^+ ≥ B$.  The divisor $B^+$ is also an $(m·l)$-complement, for every
  $l ∈ ℕ$.
\end{thm}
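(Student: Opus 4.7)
The plan is to induct on the dimension $d$, following Shokurov's strategy as developed by Birkar. Assume that the analogous statement---in its natural formulation for generalised pairs, which is what the induction actually requires---holds in all dimensions below $d$. Using the Mori-dream-space property of Fano-type varieties (Remark~\ref{rem:MoriDream}), I would first run an MMP on $-(K_X+B)$ to reach a favourable geometric setup. If the MMP ends with a Mori fibre space $X \to T$ with $\dim T < \dim X$, then the canonical bundle formula of Section~\ref{ssec:gpfib} exhibits a generalised polarised pair on $T$; an $m$-complement obtained on $T$ by induction pulls back to an $m$-complement on $X$. Hence I may henceforth assume $-(K_X+B)$ is big, and split the remaining analysis according to whether the pair is exceptional (Definition~\ref{def:exceptional}).

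\textbf{Non-exceptional case.} By definition I can choose an effective $P$ with $K_X+B+P \sim_{\mathbb R} 0$ and $(X,B+P)$ not klt. Extracting a non-klt place via a sequence of divisorial blow-ups and running an MMP, I arrive at a birational model carrying a prime divisor $T$ of log discrepancy zero. Divisorial adjunction in its generalised form (due to Kawamata and Ambro) writes the restriction of $K_X + B^+$ to $T$ as $K_T + B_T + M_T$, yielding a generalised polarised pair of dimension $d-1$ to which the inductive hypothesis applies. The technical core of the argument is then to \emph{lift} the $m$-complement produced on $T$ back to $X$: this requires a Kawamata--Viehweg vanishing argument on a suitable twist of $\mathcal{O}_X(-m(K_X+B))$ to force surjectivity of the restriction map on global sections, after which the lifted section produces the desired complement via Remark~\ref{rem:3-2}.

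\textbf{Exceptional case.} Here every $P$ with $K_X+B+P\sim_{\mathbb R}0$ keeps $(X,B+P)$ klt, so the inductive reduction above collapses. The strategy is instead to prove directly that the family of exceptional log Fano pairs of dimension $d$ with coefficients in a fixed finite set is \emph{bounded}; once this is known, the Cartier index of $K_X+B$ is uniformly bounded across the family and a single $m$ suffices. Establishing this boundedness is the real crux: it demands effective birationality for pluri-anticanonical linear systems, uniform volume bounds of the type used in \cite{MR3224718}, and the Hacon--McKernan--Xu technique of producing non-klt centres at general points by taking high-multiplicity members of $|{-nm(K_X+B)}|$ and applying tie-breaking.

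\textbf{Main obstacle.} The exceptional case is by far the hardest, since no dimension reduction is available and one must prove a boundedness theorem whose depth is comparable to Theorem~\ref{thm:BAB} itself. A pervasive secondary difficulty is the bookkeeping of coefficients: the shape $(l-r)/l$ with $r \in \mathcal R$ is chosen precisely because it is preserved under divisorial adjunction, ensuring that the induction truly closes within a finite coefficient set and that the inequality $B^+ \geq B$ can be maintained throughout.
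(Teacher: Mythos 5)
Your proposal follows essentially the same three-way decomposition Birkar uses: a fibration case handled by the canonical bundle formula and generalised pairs on the base, a non-exceptional case handled by extracting a non-klt place, adjunction to it, and lifting via Kawamata--Viehweg vanishing, and an exceptional case handled via effective birationality and a direct boundedness argument. You also correctly identify that the exceptional case is where the real difficulty lies and that the proofs of boundedness of complements and effective birationality are intertwined rather than independent.

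One preliminary reduction that you omit, and that Birkar treats as a genuine first step, is the passage from the \emph{infinite} coefficient set $\Phi(\mathcal R) = \{(l-r)/l : r\in\mathcal R,\ l\in\mathbb N\}$ to a \emph{finite} one. The hypotheses allow arbitrary $l\in\mathbb N$, so $\Phi(\mathcal R)$ accumulates at $1$; an induction cannot run over such data directly. Birkar (\cite[Prop.~2.49 and Constr.~6.13]{Bir16a}) produces a threshold $\varepsilon'\ll 1$ and shows that after replacing each coefficient in the interval $(1-\varepsilon',1)$ by $1$ and passing to a carefully chosen birational model, finding an $n$-complement is equivalent for the original and the modified pair; this relies on the ACC theorems of Hacon--McKernan--Xu \cite[Thms.~1.1 and 1.5]{MR3224718} and their generalised-pair analogues \cite[Thms.~1.5 and 1.6]{MR3502099}. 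You gesture at coefficient bookkeeping in your last paragraph, but the reduction to a finite coefficient set is logically prior to the case split and should be stated as its own step. A smaller imprecision: running the MMP on $-(K_X+B)$ is vacuous since that divisor is already nef by hypothesis; the fibration of your first case arises not from this MMP but from the MMP runs that appear in the coefficient-simplification and model-construction steps, where a Kodaira-type fibre space with $K_X+B\equiv 0$ over the base can occur.
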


\begin{rem}[Complements give sections]\label{rem:3-4}
  Given a pair $(X,B)$ as in Theorem~\ref{thm:boundOfCompl} and a number $l ∈ ℕ$
  such that $(ml)·B$ is integral, then $ml·⌊B⌋ +⌊(ml+1)·\{B\}⌋ ≥ ml·B$, and
  Remark~\ref{rem:3-2} implies that
  $h⁰ \bigl( X,\, 𝒪_X(-ml·(K_X+B)) \bigr) > 0$.
\end{rem}

\subsection{Idea of application}

We aim to show Theorem~\ref{thm:BAB}: under suitable assumptions on the
singularities the family of Fano varieties is bounded.  The proof relies on the
following boundedness criterion of Hacon and Xu that we quote without proof (but
see Sections~\ref{ssec:1-1-1v2} and \ref{ssec:1-1-2v2} for a brief discussion).
Recall that a set of numbers is DCC if every strictly descending sequence of
elements eventually terminates.

\begin{thm}[\protect{Boundedness criterion, \cite[Thm.~1.3]{MR3507257}}]\label{thm:boundednessCriterion}
  Given $d ∈ ℕ$ and a $\DCC$ set $I ⊂ [0,1] ∩ ℚ$, let $\mathcal{Y}_{d,I}$ be the
  family of pairs $(X,B)$ such that the following holds true.
  \begin{enumerate}
  \item The pair $(X, B)$ is projective, klt, and of dimension $\dim_ℂ X = d$.
  \item The coefficients of $B$ are contained in $I$.  The divisor $B$ is big
    and $K_X + B \sim_ℚ 0$.
  \end{enumerate}
  Then, the family $\mathcal{Y}_{d,I}$ is bounded.  \qed
\end{thm}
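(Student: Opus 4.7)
The approach is to combine three classical ingredients: (a) the Global ACC theorem of Hacon--McKernan--Xu to reduce the DCC hypothesis on the coefficients of $B$ to a finiteness hypothesis, (b) a uniform upper bound on the volume $\vol(B)$, and (c) effective birationality --- the existence of a single $m \in \bN$ such that $|\lfloor mB \rfloor|$ defines a birational map for every pair in $\mathcal Y_{d,I}$. Given (a)--(c), the Hilbert-scheme argument recalled in Section~\ref{ssec:1-1-1v2} produces the desired bounding family.

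First, I would apply the Global ACC: since $(X,B)$ is lc with $K_X + B \sim_{\bQ} 0$ and the coefficients of $B$ lie in the $\DCC$ set $I$, those coefficients in fact take only finitely many values from some finite subset $I_0 \subset I$ depending only on $d$ and $I$. This replaces the delicate DCC hypothesis by a concrete finiteness condition, which simplifies all bookkeeping later. Second, I would bound $\vol(B)$. Because $B$ is big and $K_X + B \sim_{\bQ} 0$, the divisor $-K_X \sim_{\bQ} B$ is big, so $X$ is of Fano type, hence a Mori dream space. Combining this with the DCC-of-volumes principle of \cite{MR3224718}, one obtains $\vol(B) \leq V = V(d,I_0)$ uniformly across the family.

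Third, and this is the crux, I would establish effective birationality. The plan is to use the volume bound to produce, for a general pair of points $x,y \in X$, an effective $\bQ$-divisor $D \sim_{\bQ} tB$ with $0 < t < 1$ uniform, whose non-klt locus has an isolated component at $x$ and another at $y$. Because $K_X + B + D \sim_{\bQ} D$ and $D$ is big, Kawamata--Viehweg vanishing applied to $\lfloor m(K_X + B + D)\rfloor$ for suitable $m$ lifts sections from the non-klt locus that separate $x$ from $y$; this turns $|\lfloor mB\rfloor|$ into a birational map. The case where the minimal non-klt centre through a general point is positive-dimensional is handled by induction on dimension via the fibration it induces, which is precisely where the canonical bundle formula of Section~\ref{ssec:gpfib} forces generalised pairs into the argument.

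The main obstacle is this third step: without the bigness of $B$, such a uniform $m$ simply cannot exist (consider an elliptic fibration with $K_X + B \equiv 0$), so the argument must exploit $\vol(B) > 0$ quantitatively and uniformly. Executing the induction across the whole family requires both the ACC for log canonical thresholds and careful control of the singularities of $(X, B + tD)$ when one cuts by $D$ --- standard but technically heavy pieces of the Birkar--Hacon--McKernan--Xu toolkit. Once (c) is in place, bounded volume plus uniform birationality embeds a fixed birational model of each $X$ into a fixed projective space with bounded degree, and resolving indeterminacies over the resulting Hilbert scheme produces the bounding morphism $u : \bX \to Y$.
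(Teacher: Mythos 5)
The paper does not prove this statement: the sentence immediately above it says it is ``quote[d] without proof'' from Hacon--Xu \cite[Thm.~1.3]{MR3507257}, and the $\qed$ marks it as a citation rather than as something being established in the survey. So the relevant question is not how your argument compares with the paper's, but whether your proposal actually closes, and there is a concrete gap at the final step.

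Effective birationality plus the volume bound give that $|\lfloor m B \rfloor|$ maps $X$ birationally onto a subvariety of bounded degree in a fixed $\bP^N$. That is \emph{log birational boundedness} in the sense of \cite{MR3034294, MR3224718}; it is not yet boundedness of $\mathcal Y_{d,I}$. Your last sentence, that ``resolving indeterminacies over the resulting Hilbert scheme produces the bounding morphism $u:\bX\to Y$'', does not close this gap: resolving indeterminacies passes to a common \emph{higher} model, whereas $X$ sits \emph{below} the bounded birational model, and there is no natural morphism from the Hilbert-scheme family back down to $X$. Bridging this gap is precisely the new content of Hacon--Xu, and it is where the Fano-type hypothesis (bigness of $B\sim_{\bQ}-K_X$) is used in earnest, not merely in the effective-birationality step. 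Since $X$ is a Mori dream space, the pair $(X,B)$ is recovered as the ample model --- the output of a suitable MMP --- of a perturbation of $K_Y+D$ on a bounded log-smooth model $(Y,D)$, and one proves that such ample models, taken fibrewise over a bounded base, stay in a bounded family after stratifying the base. Without an argument of this type your proposal bounds a family of birational models of the $X$'s and stops there.
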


With the boundedness criterion in place, the following observation relates
``boundedness of complements'' to ``boundedness of Fanos'' and explains what
pieces are missing in order to obtain a full proof.

\begin{obs}\label{obs:1}
  Given $d ∈ ℕ$ and $ε ∈ ℝ^+$, Theorem~\ref{thm:boundOfCompl} gives a number
  $m ∈ ℕ$ such that every $ε$-lc Fano variety $X$ with $-K_X$ nef admits an
  effective complement $B^+$ of $K_X = K_X+0$, with coefficients in the set
  $\{\frac{1}{m}, \frac{2}{m}, …, \frac{m}{m}\}$.  If one could in addition
  always choose $B^+$ so that $(X,B^+)$ was klt rather than merely lc, then
  Theorem~\ref{thm:boundednessCriterion} would immediately apply to show that
  the family of $ε$-lc Fano varieties with $-K_X$ nef is bounded.
\end{obs}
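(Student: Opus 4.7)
The plan is to deduce the observation directly from the two ingredients already at hand, namely Theorem~\ref{thm:boundOfCompl} applied with a trivial boundary and Theorem~\ref{thm:boundednessCriterion} applied to the resulting complements.

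First, to produce the number $m$, I would apply Theorem~\ref{thm:boundOfCompl} to the given dimension $d$ and the singleton set $\mathcal R := \{0\} ⊂ [0,1]∩ℚ$. Every $ε$-lc Fano variety $X$ of dimension $d$ satisfies the hypotheses with the choice $B := 0$: since $ε > 0$ and $(X,0)$ is $ε$-lc the variety $X$ is klt, and together with the fact that $-K_X$ is ample (hence nef and big) this shows that $X$ is of Fano type in the sense of Definition~2.14; the (empty) coefficients of $B = 0$ fit the prescribed form $\tfrac{l-r}{l}$ trivially; and $-(K_X+B) = -K_X$ is nef. Theorem~\ref{thm:boundOfCompl} then produces an $m$-complement $B^+$ of $K_X$ with $B^+ ≥ 0$ and with $m·B^+$ integral. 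Since $(X, B^+)$ is lc, the coefficients of $B^+$ automatically lie in $[0,1]$; combined with the integrality of $m·B^+$, every nonzero coefficient must belong to $\bigl\{\tfrac{1}{m},\tfrac{2}{m},…,\tfrac{m}{m}\bigr\}$, as claimed.

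Second, I would invoke the boundedness criterion under the additional klt hypothesis. Set $I := \bigl\{0, \tfrac{1}{m},\tfrac{2}{m},…,\tfrac{m}{m}\bigr\}$, which is finite and hence a fortiori DCC. For every $ε$-lc Fano $X$ of dimension $d$, the pair $(X, B^+)$ built above is then projective, of dimension $d$, has coefficients in $I$, and satisfies $K_X + B^+ \sim_ℚ 0$ by definition of an $m$-complement. The divisor $B^+$ is moreover big, since $B^+ \sim_ℚ -K_X$ and $-K_X$ is ample on a Fano variety. If one could always arrange that $(X, B^+)$ is in addition klt, then all hypotheses of Theorem~\ref{thm:boundednessCriterion} would be met, so the family $\{(X, B^+)\}$ would be bounded; forgetting the boundary immediately yields boundedness of the underlying family of $ε$-lc Fano varieties.

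The entire substance of the argument is thus concentrated in the conditional clause that the observation itself singles out. The complement produced by Theorem~\ref{thm:boundOfCompl} is guaranteed only to be lc, and there is no direct reason to expect that components of coefficient one can be avoided. Eliminating or perturbing such non-klt centres, in a way that is uniform across the whole family of $ε$-lc Fanos, is the hard part of the programme: it is precisely what the subsequent sections of the paper --- dealing with volume bounds, lc thresholds, and the creation of non-klt centres via sections of $-m(K_X+B)$ --- are devoted to.
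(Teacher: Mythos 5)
Your proof is correct and elaborates the reasoning that the paper leaves implicit in this observation: apply Theorem~\ref{thm:boundOfCompl} with trivial boundary, read off the coefficient set from integrality of $m\cdot B^+$ and the lc condition, note bigness of $B^+\sim_{\mathbb Q}-K_X$, and observe that klt-ness of $(X,B^+)$ is precisely the missing hypothesis for Theorem~\ref{thm:boundednessCriterion}. The only stylistic remark is that choosing $\mathcal R=\{1\}$ (so that $0=\tfrac{1-1}{1}\in\Phi(\mathcal R)$ explicitly) avoids having to argue that the coefficient condition is vacuous for $B=0$, but your choice $\mathcal R=\{0\}$ is also fine.
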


As an important step towards boundedness of $ε$-lc Fanos, we will see in
Section~\ref{sec:EB} how the theorem on ``effective birationality'' together
with Theorem~\ref{thm:boundednessCriterion} and Observation~\ref{obs:1} can be
used to find a boundedness criterion (=Proposition~\vref{prop:4.3a}) that
applies to a relevant class of klt, weak Fano varieties.

\subsection{Variants and generalisations}
\label{sec:vandg}

Theorem~\ref{thm:boundOfCompl} is in fact part of a much larger package,
including boundedness of complements in the relative setting,
\cite[Thm.~1.8]{Bir16a}, and boundedness of complements for generalised
polarised pairs, \cite[Thm.~1.10]{Bir16a}.  To keep this survey reasonably short,
we do not discuss these results here, even though they are of independent
interest, and play a role in the proofs of Theorems~\ref{thm:boundOfCompl} and
\ref{thm:BAB}.

\subsection{Idea of proof for Theorem~\ref*{thm:boundOfCompl}}
\label{ssec:BC}

We sketch a proof of ``boundedness of complements'', following
\cite[p.~6ff]{Bir16a} in broad strokes, and filling in some details now and
then.  In essence, the proof works by induction over the dimension, so assume
that $d$ is given and that everything was already shown for varieties of lower
dimension.

\subsubsection*{Simplification}

Theorem~\ref{thm:boundOfCompl} considers a finite set $\cR ⊂ [0,1] ∩ ℚ$, and log
canonical pairs $(X,B)$, where the coefficients of $B$ are contained in the set
$$
Φ(\cR) := \bigl\{ \textstyle{\frac{l-r}{l}} \,|\, r ∈ \mathcal{R} \text{ and
} l ∈ ℕ \bigr\}.
$$
The set $Φ(\cR)$ is infinite, and has $1 ∈ ℚ$ as its only accumulation point.
Birkar shows that it suffices to treat the case where the coefficient set is
finite.  To this end, he constructs in \cite[Prop.~2.49 and
Constr.~6.13]{Bir16a} a number $ε' ≪ 1$ and shows that it suffices to consider
pairs with coefficients in the finite set $Φ(\cR) ∩ [0,1-ε'] ∪ \{1\}$.  In fact,
given any $(X,B)$, he considers the divisor $B'$ obtained by replacing those
coefficients on $B$ that lie in the range $(1-ε',1)$ with $1$.  Next, he
constructs a birational model $(X'', B'')$ of $(X, B')$ that satisfies all
assumptions Theorem~\ref{thm:boundOfCompl}.  His construction guarantees that to
find an $n$-complement for $(X,B)$ it is equivalent to find an $n$-complement
for $(X'', B'')$.  Among other things, the proof involves carefully constructed
runs of the Minimal Model Programme, Hacon-McKernan-Xu's local and global ACC
for log canonical thresholds \cite[Thms.~1.1 and 1.5]{MR3224718}, and the
extension of these results to generalised pairs \cite[Thm.~1.5 and
1.6]{MR3502099}.

\begin{rem}
  Recall from Remark~\ref{rem:MoriDream} that Assumption~\ref{il:r1} (``$X$ is
  of Fano type'') allows us to run Minimal Model Programmes on arbitrary
  divisors.
\end{rem}

Along similar lines, Birkar is able to modify $(X'', B'')$ by further birational
transformation, and eventually proves that it suffices to show boundedness of
complements for pairs that satisfy the following additional assumptions.

\begin{ass}
  The coefficient set of $(X, B)$ is contained in $\cR$ rather than in $Φ(\cR)$,
  and one of the following holds true.
  \begin{enumerate}
  \item\label{il:sc1} The divisor $-(K_X+B)$ is nef and big, and $B$ has a
    component $S$ with coefficient $1$ that is of Fano type.

  \item\label{il:sc2} There exists a fibration $f \colon X → T$ and
    $K_X+B\equiv 0$ along that fibration.

  \item\label{il:sc3} The pair $(X,B)$ is exceptional.
  \end{enumerate}
\end{ass}

\begin{com*}
  The main distinction is between Case~\ref{il:sc3} and Case~\ref{il:sc1}.  In
  fact, if $(X,B)$ is not exceptional, recall from
  Definition~\ref{def:exceptional} that there exists an effective $P ∈ ℝ\Div(X)$
  such that $K_X + B + P \sim_{ℝ} 0$ and such that $(X, B+P)$ is \emph{not} klt.
  This allows us to find a birational model whose boundary contains a divisor
  with multiplicity one.  Case~\ref{il:sc2} comes up if the runs of the Minimal
  Model Programmes used in the construction of birational models terminates with
  a Kodaira fibre space.
\end{com*}

The three cases \ref{il:sc1}--\ref{il:sc3} require very different inductive
treatments.

\subsubsection*{Case~\ref{il:sc1}}

We consider only the simple case where $S = ⌊ B ⌋$ is a normal prime divisor,
where $(X,B)$ is plt near $S$ and where $-(K_X+B)$ is ample.  Setting
$B_S := (K_X+B)|_S - K_S$, the coefficients are contained in a finite set $\cR'$
of rational numbers that depends only on $\cR$ and on $d$.  In summary, the pair
$(S, B_S)$ reproduces the assumptions of Theorem~\ref{thm:boundOfCompl}, and by
induction we obtain a number $n ∈ ℕ$ that depends only on $\cR$ and $d$, such
that
\begin{enumerate}
\item\label{il:i1} the divisor $n·B_S$ is integral, and
\item\label{il:i2} there exists an $n$-complement $B^+_S$ of $K_S+B_S$.
\end{enumerate}
Following \cite[Prop.~6.7]{Bir16a}, we aim to extend $B^+_S$ from $S$ to a
complement $B^+$ of $K_X+B$ on $X$.  As we saw in in Remark~\ref{rem:3-4},
Item~\ref{il:i1} guarantees that $n·(B^+_S-B_S)$ is effective, so that the
complement $B^+_S$ gives rise to a section in
$$
H⁰ \bigl( S,\, n·(B^+_S-B_S) \bigr) = H⁰ \bigl( S,\, -n·(K_S+B_S) \bigr)
$$
But then, looking at the cohomology of the standard ideal sheaf sequence,
$$
H⁰ \bigl( X,\, -n·(K_X+B) \bigr) →
\underbrace{H⁰ \bigl( S,\, -n·(K_X+B)|_S \bigr)}_{\not = 0 \text{ by Rem.~\ref{rem:3-4}}} →
\underbrace{H¹ \bigl( X,\, -n·(K_X+B) - S \bigr) }_{= 0 \text{ by Kawamata-Viehweg vanishing}}
$$
we find that the section extends to $X$ and defines an associated divisor
$B^+ ∈ \lvert-(K_X+B)\rvert_ℚ$.  Using the connectedness principle for non-klt
centres\footnote{For generalised pairs, this is \cite[Lem.~2.14]{Bir16a}}, one
argues that $B^+$ is the desired complement.

\subsubsection*{Case~\ref{il:sc2}}
\label{ssect:sc2}

Given a fibration $f: X → T$, we apply the construction of
Section~\ref{ssec:gpfib}, in order to equip the base variety $T$ with the
structure of a generalised polarised pair $(T, B+M)$, with data
$T' \overset{φ}{→} T → \Spec ℂ$ and $M'$.

Adding to the results explained in Section~\ref{ssec:gpfib}, Birkar shows that
the coefficients of $B$ and $M$ are not arbitrary.  The coefficients of $B$ are
in $Φ(\cS)$ for some fixed finite set $\cS$ of rational numbers that depends
only on $\cR$ and $d$.  Along similar lines, there exists a bounded number
$p ∈ ℕ$ such that $p·M$ is integral.  The plan is now to use induction to find a
bounded complement for $K_T + B + M$ and pull it back to $X$.  This plan works
out well, but requires us to formulate and prove all results pertaining to
boundedness of complements in the setting of generalised polarised pairs.  All
the arguments sketched here continue to work, \emph{mutatis mutandis}, but the
level of technical difficulty increases substantially.

\subsubsection*{Case~\ref{il:sc3}}

There is little that we can say in brief about this case.  Still, assume for
simplicity that $B=0$ and that $X$ is a Fano variety.  If we could show that $X$
belongs to a bounded family, then we would be done.  Actually we need something
weaker: effective birationality.  Assume we have already proved
Theorem~\ref{thm:effBir}.  Then there is a bounded number $m ∈ ℕ$ such that
$|-mK_X|$ defines a birational map.  Pick $M∈ |-mK_X|$ and let
$B^+ := \frac{1}{m}·M$.  Since $X$ is exceptional, $(X, B^+)$ is automatically
klt, hence $K_X+B^+$ is an $m$-complement.

Although this gives some idea of how one may get a bounded complement but in
practice we cannot give a complete proof of Theorem~\ref{thm:effBir} before
proving Theorem~\ref{thm:boundOfCompl}.  Contrary to the exposition of this
survey paper, where ``boundedness of complements'' and ``effective
birationality'' are treated as if they were separate, the proofs of the two
theorems are in fact much intertwined, and this is one of the main points where
they come together.  Many of the results discussed in this overview (``Bound on
anti-canonical volumes'', ``Bound on lc thresholds'') have separate proofs in
the exceptional case.

%
%
\svnid{$Id: 05-effective-birationality.tex 82 2019-01-08 08:34:53Z kebekus $}

\section{Effective birationality}
\label{sec:EB}
\subversionInfo

\subsection{Statement of result}

The second main ingredient in Birkar's proof of boundedness is the following
result.  A proof is sketched in Section~\vref{ssec:BC}.

\begin{thm}[\protect{Effective birationality, \cite[Thm.~1.2]{Bir16a}}]\label{thm:effBir}
  Given $d ∈ ℕ$ and $ε ∈ ℝ^+$, there exists $m ∈ ℕ$ with the following property.
  If $X$ is any $ε$-lc weak Fano variety of dimension $d$, then
  $\lvert -m·K_X \rvert$ defines a birational map.
\end{thm}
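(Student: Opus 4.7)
The goal is to produce, for any two general points $x_1, x_2 \in X$, a section of $-m K_X$ that separates them, with $m$ depending only on $d$ and $\varepsilon$. The plan is the now-standard non-klt-centre technique à la Kollár--Angehrn--Siu, but turned into a uniform statement by using the theorem on boundedness of complements (Theorem~\ref{thm:boundOfCompl}) together with a lower bound on the anticanonical volume. Concretely, I would break the argument into three stages.

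\emph{Stage 1 (Uniform volume bound).} First I would establish that there exists $v = v(d,\varepsilon) > 0$ such that $\vol(-K_X) \geq v$ for every $\varepsilon$-lc weak Fano $X$ of dimension $d$. This is the ``bound on anticanonical volumes'' mentioned in the outline of the paper, and I would prove it by contradiction: if $\vol(-K_{X_i}) \to 0$ along a sequence, run the MMP on suitably chosen boundaries, pass to limits, and extract a contradiction from the global ACC for log canonical thresholds of Hacon--McKernan--Xu combined with the $\varepsilon$-lc hypothesis (which forbids discrepancies from concentrating). This volume bound is what lets us create singularities of prescribed order at a general point without using more than a bounded multiple of $-K_X$.

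\emph{Stage 2 (Creating isolated non-klt centres).} For a general point $x \in X$, the volume bound together with an elementary dimension count produces, for any sufficiently divisible $n$, an effective $\bQ$-divisor $\Delta \sim_\bQ -\lambda K_X$ with $\lambda = \lambda(d,\varepsilon) < 1$ such that $\mult_x \Delta$ is so large that $(X, \Delta)$ is not klt at $x$. Using Shokurov's tie-breaking (perturbing by a small ample and subtracting), I would shrink $\Delta$ so that $(X, \Delta)$ has a \emph{unique} minimal non-klt centre $V$ through $x$, with $(X,\Delta)$ plt in a punctured neighbourhood of $V$.

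\emph{Stage 3 (Lifting sections and induction on $\dim V$).} Write $-(K_X + \Delta) \sim_\bQ (1-\lambda)·(-K_X)$, which is ample. Adjunction gives an $\varepsilon'$-lc structure on the normalisation of $V$, where $V$ inherits a (generalised) Fano-type structure of smaller dimension. By induction on dimension, together with the boundedness of complements (Theorem~\ref{thm:boundOfCompl}) applied on $V$, one obtains a bounded $n$ such that $|-n(K_X+\Delta)|_V$ separates $x$ from another general point $x_2$ (after enlarging $\Delta$ near $x_2$ as well). Kawamata--Viehweg vanishing applied on a log resolution of $(X,\Delta)$ then lifts such sections from $V$ to $X$, producing a section of $-m K_X$ (for $m$ a bounded multiple of $n/(1-\lambda)$) that separates $x$ and $x_2$.

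\emph{Main obstacle.} Two points will require serious work. The first is Stage~1: the lower bound on $\vol(-K_X)$ is itself a deep theorem, intertwined with the rest of Birkar's package. The second, which is the true conceptual difficulty, is that Stages~2 and~3 cannot cleanly use Theorem~\ref{thm:boundOfCompl} as a black box: the centre $V$ produced by tie-breaking is a lower-dimensional object of generalised Fano type with coefficients that must be controlled, and feeding it into the induction requires the generalised-pair version of boundedness of complements (cf.~Section~\ref{sec:vandg}). Thus effective birationality and boundedness of complements must be proved by a simultaneous induction on the dimension, rather than one after the other. This joint induction, and the careful tracking of coefficient sets through adjunction so that they remain in a fixed $\Phi(\cR)$-type set at every step, is where the real work lies.
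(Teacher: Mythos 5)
Your overall skeleton --- create non-klt centres at a general point, restrict to the minimal centre, use adjunction and induction on dimension together with boundedness of complements, then lift sections by vanishing --- is genuinely the Angehrn--Siu/Koll\'ar framework that Birkar uses, and you correctly identify the true obstacle: effective birationality and Theorem~\ref{thm:boundOfCompl} cannot be separated, so the argument is a joint induction in which the generalised-pair version of complements is indispensable. That part of your analysis is sound.

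However, Stage~1 is a real gap, and it propagates. You propose to first establish a \emph{lower} bound $\vol(-K_X)\ge v(d,\varepsilon)$ and attribute it to the ``bound on anticanonical volumes'' in the paper; but the volume result actually proved there, Theorem~\ref{thm:bvol}, is an \emph{upper} bound $\vol(-K_X)\le v$. A uniform lower bound on $\vol(-K_X)$ for $\varepsilon$-lc Fanos is, in the logic of the paper, essentially equivalent to the boundedness one is trying to prove, and it is not produced by the ACC-plus-MMP argument you sketch --- global ACC constrains coefficient sets and log canonical thresholds, not anticanonical volumes directly. Birkar sidesteps the need for such a preliminary bound with the auxiliary invariant $n_X := \min\{\,n' : \vol(-n'K_X) \ge (2d)^d\,\}$ and proves the key estimate as a \emph{ratio} bound $m_X/n_X \le C(d,\varepsilon)$, only afterwards bounding $m_X$ itself (which then yields the lower volume bound a posteriori). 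This also invalidates your Stage~2 claim $\lambda < 1$: because nothing a priori bounds $n_X$, the coefficient in $\Delta \sim_{\mathbb Q} -(n_X+1)K_X$ is typically $>1$ and varies with $X$.

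Finally, Stage~3 as written is closer to the \emph{isolated-centre} branch of the proof, where multiplier-ideal vanishing and restriction to a zero-dimensional $V$ work cleanly. The genuine difficulty is the positive-dimensional case, and there the mechanism is not a direct lift-and-induct: instead one shows (via the volume bound of Claim~\ref{claim:vB}) that the adjoint pair $(\what G,\what B)$ on a modification of the centre lies in a bounded family of couples, then derives a contradiction to unboundedness of $m_X/n_X$ from Proposition~\ref{p-non-term-places} (``singularities in bounded families'') together with a divisor of integral coefficients coming from a partial complements result. So the positive-dimensional case is a contradiction argument through bounded families, not a construction of separating sections. Your proposal names the right ingredients but replaces the engine that makes them run.
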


\begin{rem}
  The divisors $m·K_X$ in Theorem~\ref{thm:effBir} need not be Cartier.  The
  linear system $\lvert -m·K_X \rvert$ is the space of effective Weil divisors
  on $X$ that are linearly equivalent to $-m·K_X$.
\end{rem}

\subsection{Idea of application}
\label{ssec:EBA}

In the framework of \cite{Bir16a}, effective birationality is used to improve
the boundedness criterion spelled out in Theorem~\ref{thm:boundednessCriterion}
above.

\begin{prop}[\protect{Boundedness criterion, \cite[Prop.~7.13]{Bir16a}}]\label{prop:4.3a}
  Let $d, v ∈ ℕ$ and let $(t_ℓ)_{ℓ ∈ ℕ}$ be a sequence of positive real numbers.
  Let $\mathcal{X}$ be the family of projective varieties $X$ with the following
  properties.
  \begin{enumerate}
  \item The variety $X$ is a klt weak Fano variety of dimension $d$.
    
  \item\label{il:x3} The volume of the canonical class is bounded,
    $\vol(-K_X) ≤ v$.
    
  \item\label{il:x4} For every $ℓ ∈ ℕ$ and every $L ∈ \lvert -ℓ·K_X \rvert$, the
    pair $(X, t_ℓ·L)$ is klt.
  \end{enumerate}
  Then, $\mathcal{X}$ is a bounded family.
\end{prop}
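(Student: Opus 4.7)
The plan is to reduce to Theorem~\ref{thm:boundednessCriterion}: it suffices to produce, for each $X ∈ \mathcal{X}$, an effective $\bQ$-divisor $B_X$ such that $(X, B_X)$ is klt, $K_X + B_X \sim_\bQ 0$, $B_X$ is big, and the coefficients of $B_X$ lie in a DCC set that does not depend on $X$.

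The central step is to produce an integer $m ∈ \bN$, depending only on $d$, $v$, and the sequence $(t_ℓ)$, such that for every $X ∈ \mathcal{X}$ the linear system $|-m K_X|$ defines a birational map. Granted such an $m$, I pick for each $X$ a general $L ∈ |-m K_X|$ and set $B_X := \tfrac{1}{m} L$. Then $B_X \sim_\bQ -K_X$, so $K_X + B_X \sim_\bQ 0$; bigness of $B_X$ follows from bigness of $-K_X$ on the weak Fano $X$; and the coefficients of $B_X$ all belong to the finite (hence DCC) set $\{k/m \,:\, 0 ≤ k ≤ m\}$, uniform in $X$. It remains to verify that $(X, B_X)$ is klt, and this is where Assumption~\ref{il:x4} enters: arranging $m$ so that additionally $t_m ≥ 1/m$, the assumption gives that $(X, t_m L)$ is klt, whence $(X, \tfrac{1}{m} L)$ is klt too by monotonicity of lc thresholds in the coefficient (Remark~\ref{rem:sflct}). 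Theorem~\ref{thm:boundednessCriterion} then applies and yields boundedness of $\mathcal{X}$.

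The principal obstacle is the production of $m$: one needs a uniform effective-birationality statement for klt (rather than $\varepsilon$-lc) weak Fanos with bounded volume. Theorem~\ref{thm:effBir} does not apply directly, but the inputs its proof actually consumes --- singularity control on the linear systems $|-ℓ K_X|$ and a volume bound on $-K_X$ --- are exactly those furnished by Assumptions~\ref{il:x4} and \ref{il:x3}, respectively. Concretely, Assumption~\ref{il:x4} supplies the uniform lower bound $\lct\bigl(X,\, |-ℓ K_X|\bigr) ≥ t_ℓ$ on log canonical thresholds, which is the form of input required by Birkar's proof of Theorem~\ref{thm:effBir} (non-klt-centre creation from volume, tie-breaking, and section-lifting, in the spirit of the scheme sketched in Section~\ref{ssec:BC}). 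Running that proof in the present setting is expected to deliver an $m$ which may, in addition, be chosen large enough (in a way adapted to the given sequence $(t_ℓ)$) to satisfy $t_m \ge 1/m$. This delicate coordination between the birationality index and the sequence $(t_ℓ)$ is the technical crux and is inseparable from the remainder of Birkar's machinery.
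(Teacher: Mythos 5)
Your plan to reduce to Theorem~\ref{thm:boundednessCriterion} is the correct one, and your set-up --- pick a general $L \in |-m K_X|$ and try to certify $(X, \tfrac{1}{m}L)$ klt with coefficients in a fixed finite set --- is essentially the situation that Observation~\ref{obs:1} singles out. But the hinge of your argument, ``arranging $m$ so that additionally $t_m \geq 1/m$,'' is not generally achievable. The sequence $(t_\ell)$ is part of the data, an arbitrary sequence of positive reals; nothing in the argument lets you enlarge any term. If $t_\ell < 1/\ell$ for every $\ell$ (say $t_\ell = 1/\ell^2$), then no admissible effective-birationality index $m$ --- not the one Theorem~\ref{thm:effBir} produces, nor any multiple of it --- will ever satisfy $t_m \geq 1/m$. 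In that case hypothesis~\ref{il:x4} only gives that $(X, t_m L)$ is klt, which is strictly weaker than the klt-ness of $(X, \tfrac{1}{m}L)$ that Theorem~\ref{thm:boundednessCriterion} needs. This is exactly the obstruction Observation~\ref{obs:1} flags: the boundary you produce is only guaranteed lc, not klt.

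The paper's proof works precisely in the complementary regime $t_{m\ell} < \tfrac{1}{m\ell}$ and gets around the difficulty with a considerably more elaborate perturbation. It invokes boundedness of complements (Theorem~\ref{thm:boundOfCompl}) --- absent from your argument --- to furnish an $m$-complement $B^+ = \tfrac{1}{m}M$; applies Proposition~\ref{thm:lbb} with $M = -m K_X$, $B = 0$ to obtain a log-birationally bounded family of couples $(X',\Sigma)$; extracts from that bounded family a universal constant $\ell$ and a divisor $G \in |\ell \cdot A_{X'}|$ supported on $\Sigma$; and, when $\tfrac{1}{m\ell}(G+\ell R)$ fails to be lc, builds a replacement $B'$ using boundedness of complements for lower-dimensional generalized pairs and in the relative setting. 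The klt boundary finally fed to Theorem~\ref{thm:boundednessCriterion} is the specific combination $B^{++} = \tfrac{1}{2}B^+ + \tfrac{t}{2m}A - \tfrac{t}{2m\ell}G + \tfrac{1}{2}B'$, designed so that the middle terms cancel $\bQ$-linearly while the whole is klt with coefficients in a fixed finite set. Hypothesis~\ref{il:x4} is indeed used to control singularities of a member of $|-m\ell K_X|$, but only at the small coefficient $t_{m\ell}$, not at $1/(m\ell)$; all of the extra machinery exists precisely to bridge the gap your argument leaves open.
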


\begin{rem}
  The formulation of Proposition~\ref{prop:4.3a} is meant to illustrate the
  application of Theorem~\ref{thm:effBir} to the boundedness problem.  It is a
  simplified version of Birkar's formulation and defies the logic of his work.
  While we present Proposition~\ref{prop:4.3a} as a corollary to
  Theorem~\ref{thm:effBir}, and to all the results mentioned in
  Section~\ref{sec:bcomp}, Birkar uses \cite[Prop.~7.13]{Bir16a} as one step in
  the inductive proof of ``boundedness of complements'' and ``effective
  birationality''.  That requires him to explicitly list partial cases of
  ``boundedness of complements'' and ``effective birationality'' as assumptions
  to the proposition, and makes the formulation more involved.
\end{rem}

\begin{rem}
  Proposition~\ref{prop:4.3a} reduces the boundedness problem to solving the
  following two problems.
  \begin{itemize}
  \item Boundedness of volumes, as required in \ref{il:x3}.  This is covered in
    the subsequent Section~\ref{sec:volumes}.
  
  \item Existence of numbers $t_ℓ$, as required in \ref{il:x4}.  This amounts to
    bounding ``lc thresholds'' and is covered in Section~\ref{sec:lcthres}.
  \end{itemize}
\end{rem}

To prove Proposition~\ref{prop:4.3a}, Birkar uses effective birationality in the
following form, as a log birational boundedness result.

\begin{prop}[\protect{Log birational boundedness of certain pairs, \cite[Prop.~4.4]{Bir16a}}]\label{thm:lbb}
  Given $d,v ∈ ℕ$ and $ε ∈ ℝ^+$.  Then, there exists $c ∈ ℝ^+$ and a bounded
  family $\cP$ of couples with the following property.  If $X$ is a normal
  projective variety of dimension $d$ and if $B ∈ ℝ\Div(X)$ and $M ∈ ℚ\Div(X)$
  are divisors such that the following holds,
  \begin{enumerate}
  \item the divisor $B$ is effective, with coefficients in $\{0\} ∪ [ε,∞)$,

  \item the divisor $M$ is effective, nef and $|M|$ defines a birational
    map,

  \item the difference $M-(K_X+B)$ is pseudo-effective,

  \item the volume of $M$ is bounded, $\vol(M) < v$,

  \item if $D$ is any component of $M$, then $\mult_D (B+M) ≥ 1$,
  \end{enumerate}
  then there exists a log smooth couple $(X', Σ) ∈ \cP$, a rational map
  $\overline{X} \dasharrow X$ and a resolution of singularities
  $r : \wtilde{X} → X$, with the following
  properties.
  \begin{enumerate}
  \item\label{il:y1} The divisor $Σ$ contains the birational transform on $M$,
    as well as the exceptional divisor of the birational map $β$.

  \item\label{il:y2} The movable part $A_{\wtilde{X}}$ of $r^* M$ is basepoint
    free.
    
  \item\label{il:y3} If $\wtilde{X}'$ is any resolution of $X$ that factors via
    $X'$ and $\wtilde{X}$,
    $$
    \xymatrix{
      \wtilde{X}' \ar[d]_{s\text{, resolution}} \ar[rr]^{\wtilde{β}\text{, birational}} && \wtilde{X} \ar[d]^{r\text{, resolution}} \\
      X' \ar@{-->}[rr]_{β\text{, birational}} && X
    }
    $$
    then the coefficients of the $ℚ$-divisor $s_* (r ◦ \wtilde{β})^* M$ are at
    most $c$ and $\wtilde{β}^* A_{\wtilde{X}}$ is linearly equivalent to zero
    relative to $X'$.
  \end{enumerate}
\end{prop}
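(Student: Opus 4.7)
The plan is to use the volume bound $\vol(M)<v$ and the birationality of $|M|$ to realise $X$, up to birational modification, inside a bounded family of subvarieties of a fixed-dimensional projective space, and then to bound the coefficients of the pullback of $M$ via the pseudo-effectivity of $M-(K_X+B)$ together with the dichotomy imposed on the coefficients of $B$.

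First, I would resolve the base locus of $|M|$ by a log resolution $r:\wtilde X\to X$ of $(X,B+M)$, writing $r^*M = A_{\wtilde X}+F_{\wtilde X}$ with $A_{\wtilde X}$ the movable (basepoint free) part; this immediately yields~(\ref{il:y2}). Since $A_{\wtilde X}$ defines a birational morphism onto an image $V\subset\bP^N$ with $\deg V = A_{\wtilde X}^d \le \vol(M) < v$, a sufficiently general linear projection that is birational on $V$ places a birational model $V'$ inside $\bP^{2d+1}$ with degree at most $v$, hence $V'$ belongs to finitely many components of a Hilbert/Chow scheme of finite type. Simultaneous log resolution along stratifications of each such component produces the desired bounded family $\cP$ of log smooth couples $(X',\Sigma_0)$; choosing the member birational to $V'$ supplies the rational map $β:X'\dasharrow X$, and any common resolution $\wtilde X'$ of $X'$ and $\wtilde X$ fits into the diagram of~(\ref{il:y3}). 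I would then enlarge $\Sigma_0$ to $\Sigma$ by the birational transform of $M$ on $X'$ together with the $β$-exceptional divisors; both additions introduce only boundedly many components, so the enlarged couples remain in a bounded family, which proves~(\ref{il:y1}). The relative linear equivalence $\wtilde β^*A_{\wtilde X}\sim 0/X'$ in~(\ref{il:y3}) follows because, by construction, $\wtilde β^*A_{\wtilde X}$ is the pullback via $X'\dasharrow V'$ of a hyperplane class from $\bP^{2d+1}$.

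The main difficulty, and the step I expect to be the hardest, is the uniform coefficient bound $c$ on $N':=s_*(r\circ\wtilde β)^*M$. I would split $N' = s_*(r\circ\wtilde β)^*A_{\wtilde X} + s_*(r\circ\wtilde β)^*F_{\wtilde X}$: the first summand has bounded coefficients since it descends from a hyperplane section on the bounded-degree model $V'$. For the second summand, the hypothesis $\mult_D(B+M)\ge 1$ on components $D$ of $M$ forces a dichotomy. If $\mult_D B\ge ε$, then the estimate $\mult_D M \le \mult_D(M-K_X-B) + \mult_D(K_X+B)$, combined with pseudo-effectivity of $M-(K_X+B)$ and intersection against a fixed ample divisor supplied by $\Sigma$, yields a bound depending only on $ε$ and~$d$. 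If instead $\mult_D B = 0$ and $\mult_D M \ge 1$, the same intersection argument controls $\mult_D M$ in terms of $v$ and~$d$ via $\vol(M)<v$. Assembling all these intermediate bounds into a single constant $c=c(d,v,ε)$ uniformly over the bounded family is the principal technical obstacle; this is precisely where the ACC results for log canonical thresholds of \cite{MR3224718}, and their extensions to generalised pairs in \cite{MR3502099}, are brought to bear.
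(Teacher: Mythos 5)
Your opening steps — resolve, split $r^*M$ into a basepoint-free movable part $A_{\wtilde X}$ and a fixed part, observe that the birational morphism defined by $A_{\wtilde X}$ lands the image in a bounded Hilbert-scheme family because $A_{\wtilde X}^d \le \vol(M) < v$ — match the paper's sketch. The paper then simply invokes a criterion of Hacon--McKernan--Xu for log birational boundedness, \cite[Lem.~2.4.2(4)]{MR3034294}, to obtain both the bounded family $\cP$ of couples and the coefficient bound $c$ in one step, whereas you try to redo that step from scratch. This is where your proposal has genuine gaps.

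First, the coefficient bound. Writing $\mult_D M \le \mult_D(M-K_X-B) + \mult_D(K_X+B)$ and appealing to pseudo-effectivity of $M-(K_X+B)$ gives no control: pseudo-effectivity is a cohomological/numerical condition and does not bound the coefficient of a given prime divisor in the $\bR$-divisor $M-K_X-B$ (which may well be negative or very large along a fixed $D$). Moreover, ``intersection against a fixed ample divisor supplied by $\Sigma$'' is circular: you only know a fixed polarisation exists on $X'$ once you know $(X',\Sigma)$ lies in a bounded family, and establishing that boundedness is exactly what requires controlling the coefficients (and the number of components) of the pushed-forward $M$ in the first place. The HMX argument instead bounds the intersection of the \emph{strict transform} of $M$ with a bounded family of complete-intersection curves on the candidate model, using the volume bound and nefness, and then deduces both a bound on the number of components and on the coefficients; you do not reproduce this.

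Second, the tool you cite at the end is the wrong one: the ACC for log canonical thresholds \cite{MR3224718} and its extension to generalised pairs \cite{MR3502099} play no role in this particular proposition. What is needed is precisely the log birational boundedness criterion of \cite[Lem.~2.4.2(4)]{MR3034294}. Finally, you assert that adding the birational transform of $M$ to $\Sigma_0$ ``introduces only boundedly many components''; this is exactly one of the conclusions the HMX criterion is designed to deliver, not something that follows for free, and your sketch does not supply an independent argument for it.
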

\begin{proof}[\protect{Sketch of proof for Proposition~\ref{thm:lbb}, following \cite[p.~42]{Bir16a}}]
  Since $|M|$ defines a birational map, there exists a resolution
  $r : \wtilde{X} → X$ such that $r^* M$ decomposes as the sum of a base point
  free movable part $A_{\wtilde{X}}$ and fixed part $R_{\wtilde{X}}$.  The
  contraction $X → X''$ defined by $A_{\wtilde{X}}$ is birational.  Since
  $\vol(M)$ is bounded, the varieties $X''$ obtained in this way are all members
  of one bounded family $\cP'$.  The family $\cP'$ is however not yet the
  desired family $\cP$, and the varieties in $\cP'$ are not yet equipped with an
  appropriate boundary.  To this end, one needs to invoke a criterion of
  Hacon-McKernan-Xu for ``log birationally boundedness'',
  \cite[Lem.~2.4.2(4)]{MR3034294}, and take an appropriate resolution of the
  elements in $\cP'$.
\end{proof}

\begin{proof}[\protect{Sketch of proof for Proposition~\ref{prop:4.3a}, following \cite[p.~80]{Bir16a}}]
  Applying Theorems~\ref{thm:boundOfCompl} (``Boundedness of complements'') and
  \ref{thm:effBir} (``Effective birationality''), we find a number $m ∈ ℕ$ such
  that every $X ∈ \mathcal{X}$ admits an $m$-complement for $K_X$ and that
  $\lvert -m·K_X \rvert$ defines a birational map.  If $m$-complements $B^+$ of
  $K_X$ could always be chosen such that $(X,B^+)$ were klt, we have seen in
  Observation~\ref{obs:1} that $\mathcal{X}$ is bounded.  However,
  Theorems~\ref{thm:boundOfCompl} guarantees only the existence of an
  $m$-complement $B^+$ of $K_X$ where $(X,B^+)$ is lc.  Using the bounded family
  $\cP$ obtained when applying Proposition~\ref{thm:lbb} with $M = -m·K_X$ and
  $B = 0$, we aim to find a universal constant $ℓ$ and a finite set
  $\mathcal{R}$, and then perturb any given $(X,B^+)$ in order to find a
  boundary $B^{++}$ with coefficients in $\mathcal{R}$ that is $ℚ$-linearly
  equivalent to $-K_X$ and makes $(X, B^{++})$ klt.  Boundedness will then again
  follow from Theorem~\ref{thm:boundednessCriterion}.

  To spell out a few more details of the proof use boundedness of the family
  $\cP$ to infer the existence of a universal constant $ℓ$ with the following
  property.

  \begin{quote}
    If $(X', Σ) ∈ \cP$ and if $A_{X'} ∈ \Div(X')$ is contained in $Σ$ with
    coefficients bounded by $c$, and if $|A_{X'}|$ is basepoint free and defines
    a birational morphism, then there exists $G_{X'} ∈ |ℓ·A_{X'}|$ whose support
    contains $Σ$.
  \end{quote}

  Now assume that one $X ∈ \mathcal{X}$ is given.  It suffices to consider the
  case where $X$ is $ℚ$-factorial and admits an $m$-complement of the form
  $B^+ = \frac{1}{m}·M$, for general $M ∈ \lvert -m·K_X \rvert$.  To make use of
  $ℓ$, consider a diagram as discussed in Item~\ref{il:y3} of
  Proposition~\ref{thm:lbb} above and decompose
  $r^*M = A_{\wtilde{X}} + R_{\wtilde{X}}$ into its moving and its fixed part.
  Write $A := r_* A_{\wtilde{X}}$ and $R := r_* R_{\wtilde{X}}$.
  Item~\ref{il:y1} of Proposition~\ref{thm:lbb} implies that the divisor
  $A_{X'} := s_* \wtilde{β}^* A_{\wtilde{X}}$ is then contained in $Σ$, and
  Item~\ref{il:y3} asserts that it is basepoint free, defines a birational
  morphism.  So, we find $G_{X'} ∈ |ℓ·A_{X'}|$ as above.  Writing
  $G := r_* \wtilde{β}_*s^* G_{X'}$, we find that $G+ℓ·R ∈ |- mℓ·K_X|$, so that
  $(X, t_{mℓ} G)$ is klt by assumption.  We may assume that $t_{mℓ}$ is rational
  and $t_{mℓ} < \frac{1}{mℓ}$.  If $(X, \frac{1}{mℓ}(G+ℓ·R))$ is lc, then set
  $B' := \frac{1}{mℓ}(G+ℓ·R)$.  Otherwise, one needs to use the
  lower-dimensional versions of the variants and generalisations of boundedness
  of complements that we discussed in Section~\ref{sec:vandg} above.  To be more
  precise, using
  \begin{enumerate}
  \item boundedness of complements for generalised polarised pairs for varieties
    of dimension $≤ d-1$, and
    
  \item boundedness of complements in the relative setting for varieties of
    dimension $d$,
  \end{enumerate}
  one can always find a universal number $n$ and $B' ≥ t_{mℓ}·(G+ℓ·R)$ where
  $(X,B')$ is lc and $n·(K_X+B') \sim 0$.  Finally, set
  $$
  B^{++} := \frac{1}{2}·B^+ + \frac{t}{2m}·A - \frac{t}{2mℓ}·G + \frac{1}{2}·B'
  $$
  and then show by direct computation that all required properties hold.
\end{proof}

\subsection{Preparation for the proof of Theorem~\ref*{thm:effBir}}

We prepare for the proof with the following proposition.  In essence, it asserts
that effective divisors with ``degree'' bounded from above cannot have too small
lc thresholds, under appropriate assumptions.  Since this proposition may look
plausible, we do not go into details of the proof.  Further below,
Proposition~\ref{prop:lctbd} gives a substantially stronger result whose proof
is sketched in some detail.

\begin{prop}[\protect{Singularities in bounded families, \cite[Prop.~4.2]{Bir16a}}]\label{p-non-term-places}
  Given $ε' ∈ ℝ^+$ and given a bounded family $\cP$ of couples, there exists a
  number $δ ∈ ℝ^{>0}$ such that the following holds.  Given the the following
  data,
  \begin{enumerate}
  \item an $ε'$-lc, projective pair $(\what{G}, \what{B})$,
    
  \item a reduced divisor $T ∈ \Div(\what{G})$ such that
    $\bigl( \what{G}, \supp (\what{B}+T) \bigr) ∈ \cP$, and
    
  \item an $ℝ$-divisor $\what{N}$ whose support is contained in $T$, and whose
    coefficients have absolute values $≤ δ$,
  \end{enumerate}
  then $(\what{G}, \what{B}+\what{L})$ is klt, for all
  $\what{L} ∈ |\what{N}|_ℝ$.  \qed
\end{prop}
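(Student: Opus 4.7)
The plan is to work on a simultaneous log resolution over the bounded family $\cP$ so that pullback multiplicities are uniformly controlled, and then to combine this with the $\varepsilon'$-lc hypothesis to secure positivity of log discrepancies after $\what L$ is added.

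First, invoke the definition of bounded family to produce a Noetherian parameter scheme $S$ and a quasi-projective morphism $(\mathcal{X}, \mathcal{D}) \to S$ whose fibres realise every couple in $\cP$. After stratifying $S$ into finitely many locally closed pieces, arrange a simultaneous log resolution $\Pi : \mathcal{Y} \to \mathcal{X}$ whose restriction to each fibre is a log resolution $\pi : Y \to \what G$ of $(\what G, \supp(\what B+T))$. By Noetherianity one obtains a uniform integer $N$ bounding the number of components of $T$, a uniform constant $C$ with $\mult_E(\pi^* T_i) \leq C$ for every couple in $\cP$, every prime divisor $E$ on the corresponding $Y$, and every component $T_i \subseteq T$, and a uniform very ample divisor $A$ on each $\what G$ with uniformly bounded intersection numbers against the $T_i$.

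Next, reduce the conclusion to a multiplicity inequality. For any birational model $\tau : Z \to \what G$ extracting a prime divisor $E$ one has $a_{\log}(E, \what G, \what B+\what L) = a_{\log}(E, \what G, \what B) - \mult_E(\tau^*\what L)$, and the $\varepsilon'$-lc hypothesis then reduces klt-ness of $(\what G, \what B+\what L)$ to the strict inequality $\mult_E(\tau^*\what L) < \varepsilon'$ holding at every such $E$ on a log resolution of $(\what G, \supp(\what B + T + \what L))$. Since the pair $(Y, B_Y + \pi^*\what L)$ becomes log smooth with coefficients in $[0,1)$ as soon as $\pi$ is refined to resolve $\supp \what L$, discrepancies at further blow-ups automatically remain positive, so the inequality need only be verified on one such refined resolution. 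In the clean case $\supp \what L \subseteq T$, boundedness of the simultaneous resolution constrains the coefficients $l_i$ of $\what L = \sum l_i T_i$ so that $\max_i |l_i| \leq \delta \cdot C'$ for a universal $C'$ (using uniformity of the linear relations among the $T_i$ in the Néron-Severi group), which yields $\mult_E(\pi^*\what L) \leq N C C' \delta$.

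The main obstacle is the general case where $\supp \what L$ is not contained in $T$. For this one exploits the uniform bound $\what L \cdot A^{d-1} = \what N \cdot A^{d-1} \leq \delta \cdot C''$, together with a uniformly ample class on the total space $\mathcal{Y}$ of the simultaneous log resolution, to conclude $\mult_E(\pi^*\what L) \leq \delta \cdot C'''$ at every prime divisor $E$ on the refined resolution, including the $\pi$-exceptional ones. Choosing $\delta$ small enough that $\delta \cdot \max(N C C', C''') < \varepsilon'$ then forces $(\what G, \what B+\what L)$ to be klt. The delicate point, and presumably the reason for suppressing the details in the text, is the uniform intersection-theoretic control of multiplicities at exceptional divisors across the whole family, since for an arbitrary effective representative $\what L$ one has only indirect access to its behaviour along high-codimension centres through the smallness of its numerical class.
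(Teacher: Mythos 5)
The overall architecture --- stratify the bounded family, take a simultaneous log resolution, reduce klt-ness to multiplicity inequalities at divisorial valuations, and control those via degree bounds coming from the bounded family --- is a sensible route, and the reduction steps as well as the ``clean case'' are sound. However, the argument for the general case contains a genuine gap, and the key intermediate claim there is in fact false as stated. You assert that the uniform degree bound together with a uniformly ample class on the total space $\mathcal{Y}$ yields $\mult_E(\pi^*\what L)\le \delta\cdot C'''$ ``at every prime divisor $E$ on the refined resolution, including the $\pi$-exceptional ones.'' But the refined resolution, which must also resolve $\supp(\pi^*\what L)$, depends on $\what L$ and is \emph{not} a member of the bounded family $\mathcal{Y}\to S$; the intersection-theoretic control via $\mathcal{Y}$ simply does not reach it. Worse, the inequality cannot hold uniformly even in principle: on the smooth model $Y$, multiplicities $\mult_E(\pi^*\what L)$ along divisorial valuations $E$ deep in a tower of blow-ups grow linearly in the log discrepancy $a_{\log}(E,Y,0)$, which is unbounded. (Already for $L=m\cdot(y=0)$ on $\bA^2$ and $E_n$ the $n$-th infinitely near blow-up along $(y=0)$, one has $\mult_{E_n}=nm\to\infty$.) The Bezout/degree bound only controls $\mult_p(\pi^*\what L)$ at \emph{points} of $Y$; it does not propagate to a constant bound on $\mult_E$ across all $E$ over $Y$.

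The fix is to compare $\mult_E(\pi^*\what L)$ not against a constant but against $a_{\log}(E,Y,B_Y)$, which grows at a compatible rate. Two estimates do this. First, since $Y$ is smooth and $\max_p\mult_p(\pi^*\what L)\le \pi^*\what L\cdot A_Y^{d-1}$ is small (this is what the degree bound actually gives), the Skoda-type inequality $\lct(Y,0,\pi^*\what L)\ge 1/\max_p\mult_p(\pi^*\what L)$ yields $\mult_E(\pi^*\what L)\le \lambda\, a_{\log}(E,Y,0)$ for all $E$ over $Y$, with $\lambda:=\max_p\mult_p(\pi^*\what L)$. Second, for the log-smooth pair $(Y,B_Y)$ with coefficients $\le 1-\varepsilon'$ one has $\mult_E(\sum B_i)\le a_{\log}(E,Y,0)$ (because $(Y,\sum B_i)$ is lc), hence $a_{\log}(E,Y,B_Y)\ge \varepsilon'\, a_{\log}(E,Y,0)$ for every $E$. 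Combining, $a_{\log}(E,Y,B_Y+\pi^*\what L)\ge(\varepsilon'-\lambda)\,a_{\log}(E,Y,0)>0$ once $\lambda<\varepsilon'$, which the degree bound does deliver for $\delta$ small. So your essential inputs suffice, but the crucial growth comparison is missing from the writeup and the multiplicity claim you rely on is incorrect; without this the proof does not close.
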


\subsection{Sketch of proof of Theorem~\ref*{thm:effBir}}

Assume that numbers $d$ and $ε$ are given.  Given an $ε$-lc Fano variety $X$ of
dimension $d$, we will be interested in the following two main invariants,
\begin{align*}
  m_X & := \min \{ \: m' ∈ ℕ \,|\, \text{the linear system } \lvert -m'·K_X \rvert \text{ defines a birational map }\} \\
  n_X & := \min \{ \: n' ∈ ℕ\; \,|\, \vol(-n'·K_X) ≥ (2d)^d \:\}
\end{align*}
Eventually, it will turn out that both numbers are bounded from above.  Our aim
here is to bound the numbers $m_X$ by a constant that depends only on $d$ and
$ε$.

\subsection*{Bounding the quotient}

Following \cite{Bir16a}, we will first find an upper bound for the quotients
$m_X/n_X$ by a number that depends only on $d$ and $ε$.

\subsubsection{Construction of non-klt centres}

In the situation at hand, a standard method (``tie breaking'') allows us to find
dominating families of non-klt centres; we refer to
\cite[Sect.~6]{KollarSingsOfPairs} for an elementary discussion, but see also
\cite[Sect.~2.31]{Bir16a}.  Given an $ε$-lc Fano variety $X$ of dimension $d$,
and using the assumption that $\vol(-n_X·K_X) ≥ (2d)^d$, the following has been
shown by Hacon, McKernan and Xu.

\begin{claim}[\protect{Dominating family of non-klt centres, \cite[Lem.~7.1]{MR3224718}}]\label{claim:dfnkc}
  Given any $ε$-lc Fano variety $X$, there exists a dominating family $\cG_X$ of
  subvarieties in $X$ with the following property.  If $(x, y) ∈ X ⨯ X$ is any
  general tuple of points, then there exists a divisor $Δ ∈ |-(n_X+1)·K_X|_ℝ$
  such that the following holds.
  \begin{enumerate}
  \item\label{il:o1} The pair $(X,Δ)$ is not klt at $y$.
    
  \item\label{il:o2} The pair $(X,Δ)$ is lc near $x$ with a unique non-klt
    place.  The associated non-klt centre is a subvariety of the family $\cG_X$.
    \qed
  \end{enumerate}
\end{claim}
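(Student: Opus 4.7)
The strategy is classical (see \cite[Sect.~6]{KollarSingsOfPairs}): use the volume hypothesis to construct a divisor of large multiplicity at both $x$ and $y$, pass to its log canonical threshold, then apply tie-breaking to extract a unique minimal non-klt centre above $x$ while preserving non-kltness at $y$. Concretely, for $m \gg 0$ one has
$$
h^0 \bigl( X,\, -mn_X K_X \bigr) \ge \frac{\vol(-n_X K_X)}{d!} m^d + O(m^{d-1}),
$$
while imposing vanishing of prescribed order at two smooth points of $X$ costs $O(m^d)$ conditions whose leading coefficient can be made smaller than $\vol(-n_X K_X)/d!$ by the hypothesis $\vol(-n_X K_X) \ge (2d)^d$. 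Since $X$ is $\varepsilon$-lc hence klt, a general pair $(x,y)$ lies in the smooth locus, so the numerical count yields a non-zero section which, after rescaling, provides an $\mathbb{R}$-divisor $D_0 \in \lvert -n_X K_X \rvert_{\mathbb{R}}$ with $\mult_x D_0 > d$ and $\mult_y D_0 > d$, both with a definite margin.

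A standard discrepancy-multiplicity computation (blowing up $x$ and $y$) then shows that $(X, D_0)$ is not klt at either point, so that $t := \lct(X, D_0) < 1$ and $(X, t D_0)$ is lc with non-klt locus meeting both $x$ and $y$. To isolate a single non-klt place above $x$ while keeping non-kltness at $y$, I would apply Kawamata's tie-breaking procedure, see \cite[Sect.~2.31]{Bir16a}: pick a general $H_x \in \lvert -K_X \rvert_{\mathbb{Q}}$ passing through $x$ but not through $y$ and replace $t D_0$ by $(t - \delta) D_0 + s_x H_x$ with $\delta, s_x > 0$ chosen so that the new pair is lc in a neighbourhood of $x$ with a unique non-klt place. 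The image $V_x$ of that place is a proper subvariety of $X$; non-kltness at $y$ survives because $\mult_y D_0 > d$ with a margin and $H_x$ is disjoint from $y$. Finally, since $(t - \delta) n_X + s_x < n_X + 1$ for small $\delta, s_x$, I can add a general effective divisor $D_1 \in \lvert \alpha(-K_X) \rvert_{\mathbb{Q}}$ with $\alpha := n_X + 1 - (t - \delta) n_X - s_x$ avoiding both $x$ and $y$; setting $\Delta := (t - \delta) D_0 + s_x H_x + D_1$ produces the required element of $\lvert -(n_X + 1) K_X \rvert_{\mathbb{R}}$.

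Letting $x$ vary in a general open subset of $X$, the centres $V_x$ sit in an algebraic family $\cG_X$ which dominates $X$ by construction. The main obstacle is the compatibility of \ref{il:o1} and \ref{il:o2}: tie-breaking is inherently a local operation near $x$, and one must ensure that the perturbations used to carve out a unique place there do not inadvertently restore kltness at $y$. The slack furnished by the strict inequality $\mult_y D_0 > d$ (with a gap coming from the bound $(2d)^d$ rather than the bare $2 d^d$ that would be needed for mere simultaneous non-kltness) is precisely what keeps $y$ in the non-klt locus; a similar slack, reflected in the ``$+1$'' of the target class $-(n_X+1) K_X$, provides the room needed to add the correcting divisor $D_1$ and land in the prescribed $\mathbb{R}$-linear system.
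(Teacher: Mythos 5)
Your overall strategy (volume count to create a divisor with high multiplicity at $x$ and $y$, scaling by a log-canonical threshold, then tie-breaking) is the standard one the paper points to via Kollár's Section~6 and Birkar's Section~2.31; and since the paper only quotes the statement from HMX14, Lemma~7.1 without giving a proof, there is no paper argument to compare against directly. But there is a genuine gap at the step where you set $t := \lct(X, D_0)$ globally and assert that $(X, t\, D_0)$ is ``lc with non-klt locus meeting both $x$ and $y$.'' Writing $\lct_x$ and $\lct_y$ for the thresholds computed in small neighbourhoods of $x$ and $y$, one has $t \le \min(\lct_x, \lct_y)$, and the non-klt locus of $(X, t\, D_0)$ meets $y$ only if $t = \lct_y$. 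The volume count gives you \emph{lower} bounds $\mult_x D_0,\, \mult_y D_0 > a$, hence upper bounds $\lct_x, \lct_y \le d/a$, but it gives no lower bound on either local threshold and no reason why they should agree: nothing prevents the section you produce from being much more singular at $x$ than at $y$, in which case $\lct_x < \lct_y$, $t < \lct_y$, and $(X, t\, D_0)$ is already klt near $y$. After you pass to $(t - \delta) D_0 + s_x H_x$ the situation only worsens, since $(X, (t - \delta) D_0)$ is klt at $y$ whenever $t - \delta < \lct_y$, which is the generic situation. The ``margin'' you invoke (that $a$ exceeds $d$ by a definite factor) caps \emph{both} local thresholds from above by the same quantity; it says nothing about their ratio, and it is that ratio which decides whether $y$ stays in the non-klt locus.

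Closing the gap requires treating the two points asymmetrically, as the cited sources do: either impose a strictly larger multiplicity at $y$ than at $x$ and compute the threshold only near $x$, verifying non-lc-ness at $y$ directly from the multiplicity of the rescaled divisor; or use the degeneration/deformation-to-the-normal-cone trick of Kollár and Hacon--McKernan, which replaces $D_0$ by a limit for which the two local thresholds can be forced to coincide. You correctly flag the compatibility of~\ref{il:o1} and~\ref{il:o2} as ``the main obstacle'' in your closing paragraph, but the slack coming from $(2d)^d$ is a multiplicity bound, not a threshold bound, and it is the wrong kind of slack to resolve that obstacle.
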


Given $X$, we may assume that the members of the families $\cG_X$ all have the
same dimension, and that this dimension is minimal among all families of
subvarieties that satisfy \ref{il:o1} and \ref{il:o2}.

\subsubsection{The case of isolated centres}

If $X$ is given such that the members of $\cG_X$ are points, then the elements
are isolated non-klt centres.  Given $G ∈ \cG_X$, standard vanishing theorems
for multiplier ideals will then show surjectivity of the restriction maps
$$
H⁰ \bigl( X,\, 𝒪_X(K_X+Δ) \bigr) → \underbrace{H⁰ \bigl( G,\,
  𝒪_X(K_X+Δ)|_G \bigr)}_{≅ ℂ}.
$$
In particular, we find that $𝒪_X(K_X+Δ) ≅ 𝒪_X(-n_X·K_X)$ has non-trivial
sections.  Further investigation reveals that a bounded multiple of $-n_X·K_X$
will in fact give a birational map.

\subsubsection{Non-isolated centres}

It remains to consider varieties $X$ where the members of $\cG_X$ are
positive-dimensional.  Following \cite[proofs of Prop.~4.6 and 4.8]{Bir16a}, we
trace the arguments for that case in \emph{very} rough strokes, ignoring all of
the (many) subtleties along the way.  The main observation to handle this case
is the following volume bound.

\begin{claim}[\protect{Volume bound, \cite[Step 3 on p.~48]{Bir16a}}]\label{claim:vB}
  There exists a number $v ∈ ℝ^+$ that depends only on $d$ and $ε$, such that
  for all $X$ and all positive-dimensional $G ∈ \cG_X$, we have
  $\vol(-m_X·K_X|_G) < v$.
\end{claim}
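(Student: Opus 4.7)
The plan is to argue by contradiction, using the minimality of $\dim G$ in the choice of the dominating family $\cG_X$: if the volumes $\vol(-m_X·K_X|_G)$ could be arbitrarily large in terms of $d$ and $ε$, then for some $ε$-lc Fano $X$ I would use the excess volume to produce a strictly smaller non-klt centre through a general point of $X$, contradicting the minimality of $\dim \cG_X$ made after Claim~\ref{claim:dfnkc}.

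Starting from the family $\cG_X$ supplied by Claim~\ref{claim:dfnkc}, for a general tuple $(x,y) ∈ X ⨯ X$ I first produce $Δ ∈ |-(n_X+1)·K_X|_ℝ$ such that $(X, Δ)$ is lc near $x$ with a unique non-klt place whose centre is some positive-dimensional $G ∈ \cG_X$, and such that $(X, Δ)$ is non-klt at $y$. Taking $y$ to be a general point of $G$ (which is harmless by generality), I may assume $x, y ∈ G$ and that $G$ is smooth at both points.

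Next, I would exploit the fact that $|-m_X·K_X|$ defines a birational map, together with a lifting step of the same flavour as the one used in Case~\ref{il:sc1} of the proof of Theorem~\ref{thm:boundOfCompl} (Kawamata–Viehweg vanishing on the twisted ideal sheaf of $G$, using that $G$ is an lc centre for $(X,Δ)$). For each $N ≫ 0$ this should deliver a divisor $Θ ∈ |-N·m_X·K_X|$ whose restriction to $G$ satisfies
$$
\mult_y \bigl( Θ|_G \bigr) ≥ c_d · N · \vol(-m_X·K_X|_G)^{1/\dim G},
$$
with $c_d$ depending only on $d$. This is the standard multiplicity-from-volume estimate, obtained by a Riemann–Roch count on $G$ combined with the birational boundedness of the linear system on $X$. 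I would then form
$$
Δ' := (1-δ)·Δ + \tfrac{c}{N·m_X}·Θ
$$
with small $δ, c > 0$ chosen so that $Δ'$ is $ℝ$-linearly equivalent to a bounded multiple of $-K_X$, and apply tie-breaking (compare \cite[Sect.~6]{KollarSingsOfPairs} and \cite[Sect.~2.31]{Bir16a}) to obtain a new pair $(X, Δ'')$ which is lc near $x$ with a unique non-klt place whose centre $G'' ⊆ G$ contains $y$. If $\vol(-m_X·K_X|_G)$ were large enough in terms of $d$ and $ε$, the multiplicity gain at $y$ would force $\dim G'' < \dim G$, contradicting the minimality in the choice of $\cG_X$.

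The main obstacle is the tie-breaking and perturbation step. The $ε$-lc assumption on $X$ is indispensable there: without a uniform lower bound on log discrepancies, neither the coefficients of $Δ''$ nor the lc thresholds controlling how much of $Θ$ one may add can be bounded uniformly in $X$, and the multiplicity at $y$ cannot be converted into an effective drop in the dimension of the non-klt centre. This is the same circle of ideas that underlies Proposition~\ref{p-non-term-places}, and it is where the constant $v$ acquires its dependence on $(d, ε)$; making that dependence effective, rather than merely proving existence, accounts for the bulk of the technical work in \cite[Step~3 on p.~48]{Bir16a}.
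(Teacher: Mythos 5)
Your overall plan matches the shape of Birkar's argument as the paper sketches it: contradict the minimality of $\dim\cG_X$ by using an (assumed large) volume on $G$ to manufacture a divisor with high multiplicity at a general point of $G$, lift it to $X$ by a vanishing theorem, and tie-break to produce a strictly smaller non-klt centre through a general point. Those are the right tools. There is, however, a concrete flaw in the dimensional analysis that undercuts the conclusion you draw from it.

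Track the exponents. You take $\Theta\in\lvert -N\,m_X\,K_X\rvert$ with
$$
\mult_y\bigl(\Theta|_G\bigr) \;\gtrsim\; N\cdot \vol\bigl(-m_X K_X|_G\bigr)^{1/\dim G},
$$
and then form $Δ' = (1-δ)·Δ + \tfrac{c}{N m_X}·Θ$, where the coefficient $\tfrac{c}{N m_X}$ is chosen \emph{precisely} so that the added divisor is $\sim_{ℝ} -c·K_X$, a bounded multiple of $-K_X$. The multiplicity you thereby gain at $y$ is
$$
\frac{c}{N m_X}\cdot \mult_y\bigl(\Theta|_G\bigr)\;\gtrsim\;\frac{c}{m_X}\,\vol\bigl(-m_X K_X|_G\bigr)^{1/\dim G}\;=\;c\cdot\vol\bigl(-K_X|_G\bigr)^{1/\dim G},
$$
since $\vol(-m_X K_X|_G)=m_X^{\dim G}\vol(-K_X|_G)$. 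The factor $m_X$ cancels. Consequently the tie-breaking produces a smaller centre only when $\vol(-K_X|_G)$ is large, \emph{not} when $\vol(-m_X K_X|_G)$ is large. So the contradiction you reach bounds $\vol(-K_X|_G)$ by a constant depending on $(d,\varepsilon)$, which is strictly weaker than the Claim: $\vol(-m_X K_X|_G)=m_X^{\dim G}\vol(-K_X|_G)$, and $m_X$ is exactly what one does not yet control at this stage (bounding $m_X$ is the goal of the whole section). Your sentence ``if $\vol(-m_X K_X|_G)$ were large enough in terms of $d$ and $\varepsilon$, the multiplicity gain at $y$ would force $\dim G''<\dim G$'' is therefore false as stated: a large $\vol(-m_X K_X|_G)$ does not translate into a large multiplicity gain once you normalise the coefficient so that the total boundary stays a bounded multiple of $-K_X$. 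Closing this gap requires extra input — this is precisely where Birkar has to work harder (and where the intertwining with the complement machinery that the survey alludes to enters), so you cannot treat it as a routine step.

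Two smaller points. First, ``Kawamata--Viehweg vanishing on the twisted ideal sheaf of $G$'' is not quite the right statement when $G$ is positive-dimensional: the relevant object is the multiplier (or adjoint) ideal of $(X,\Delta)$, the vanishing is Nadel's, and the passage between a divisor on $G$ and the non-klt structure on $X$ goes through Kawamata subadjunction and inversion of adjunction, exploiting that $G$ is a \emph{minimal} centre with a \emph{unique} non-klt place; the bare ideal sheaf of $G$ will not do. Second, your attribution of the $\varepsilon$-dependence mainly to the tie-breaking is somewhat misplaced: the $\varepsilon$-lc hypothesis is already wired into the construction of $\cG_X$ (Claim~\ref{claim:dfnkc}) and into the control of the subadjunction boundary on $G$, which is where the constants in the bounded-family statement really pick up their $(d,\varepsilon)$-dependence.
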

\begin{proof}[Idea of proof for Claim~\ref{claim:vB}]
  Going back and looking at the construction of non-klt centres (that is, the
  detailed proof of Claim~\ref{claim:dfnkc}), one finds that the construction
  can be improved to provide families of lower-dimension centres if only the
  volumes are big enough.  But this collides with our assumption that the
  varieties in $\cG_X$ were of minimal dimension.
  \qedhere~(Claim~\ref{claim:vB})
\end{proof}

To make use of Claim~\ref{claim:vB}, look at one $X$ where the members of
$\cG_X$ are positive-dimensional.  Choose a general divisor\footnote{the divisor
  $M$ should really be taken as the movable part, but we ignore this detail.}
$M ∈ \lvert -m_X·K_X \rvert$, and let $(x,y) ∈ X ⨯ X$ be a general tuple of
points with associated centre $G ∈ \cG_X$.  Since $G$ is a non-klt centre that
has a unique place over it, adjunction (and inversion of adjunction) works
rather well.  Together with the bound on volumes, this allows us to define a
natural boundary $\what{B}$ on a suitable birational modification $\what{G}$ of
the normalisation of $G$, such that the following holds.\CounterStep
\begin{enumerate}
\item The pair $(\what{G}, \what{B})$ is $ε'$-lc, for some controllable number
  $ε'$.

\item Writing $E$ for the exceptional divisor of $\what{G} → G$ and
  $T := (\what{B}+E)_{\red}$, the couple
  $\bigl(\what{G}, \supp (\what{B} + T) \bigr)$ belongs to a bounded family
  $\cP$ that in turn depends only on the numbers $d$ and $ε$.
  
\item The pull-back of $M$ to $\what{G}$ has support in $\supp (\what{B} + T)$.
\end{enumerate}

\subsubsection{End of proof}

The idea now is of course to apply Proposition~\ref{p-non-term-places}, using
the family $\cP$.  Arguing by contradiction, we assume that the numbers
$m_X/n_X$ are unbounded.  We can then find one $X$ where $n_X/m_X$ is really
quite small when compared to the number $δ$ given by
Proposition~\ref{p-non-term-places}.  In fact, taking $\what{N}$ as the
pull-back of $\frac{n_X}{m_X}·M$, it is possible to guarantee that the
coefficients of $\what{N}$ are smaller than $δ$.

Intertwining this proof with the proof of ``boundedness of complements'', we may
use a partial result from that proof, and find $L ∈ |-n_X·K_X|_{ℚ}$, whose
coefficients are $≥ 1$.  Since the points $(x,y) ∈ X ⨯ X$ were chosen
generically, the pull-back $\what{L}$ of $L$ to $\what{G}$ has coefficients
$≥ 1$, and can therefore never appear in the boundary of a klt pair.  But then,
$\what{L} ∈ |\what{N}|_ℝ$, which contradicts Proposition~\ref{p-non-term-places}
and ends the proof.  In summary, we were able to bound the quotient $m_X/n_X$ by
a constant that depends only on $d$ and $ε$.  \qed\mbox{\quad(Boundedness of
quotients)}

\subsection*{Bounding the numbers $m_X$}

Finally, we still need to bound $m_X$.  This can be done by arguing that the
volumes $\vol(-m_X·K_X)$ are bounded from above, and then use the same set of
ideas discussed above, using $X$ instead of a birational model $\what{G}$ of its
subvariety $G$.  Since some of the core ideas that go into boundedness of volumes
are discussed in more detail in the following Section~\ref{sec:volumes} below,
we do not go into any details here.  \qed

%
%
\svnid{$Id: 06-volumina.tex 79 2019-01-07 13:21:51Z kebekus $}

\section{Bounds for volumes}
\label{sec:volumes}
\subversionInfo

\subsection{Statement of result}

Once Theorem~\ref{thm:BAB} (``Boundedness of Fanos'') is shown, the volumes of
anticanonical divisors of $ε$-lc Fano varieties of any given dimension will
clearly be bounded.  Here, we discuss a weaker result, proving boundedness of
volumes for Fanos of dimension $d$, assuming boundedness of Fanos in dimension
$d-1$.

\begin{thm}[\protect{Bound on volumes, \cite[Thm.~1.6]{Bir16a}}]\label{thm:bvol}
  Given $d ∈ ℕ$ and $ε ∈ ℝ^+$, if the $ε$-lc Fano varieties of dimension $d-1$
  form a bounded family, then there is a number $v$ such that $\vol(-K_X) ≤ v$,
  for all $ε$-lc weak Fano varieties $X$ of dimension $d$
\end{thm}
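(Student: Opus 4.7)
The natural approach is by contradiction, combining the volume-based construction of non-klt centres (already used in the proof of Theorem~\ref{thm:effBir}) with the inductive hypothesis in dimension $d-1$. Suppose the conclusion fails: there is a sequence of $\varepsilon$-lc weak Fano varieties $X_i$ of dimension $d$ with $\vol(-K_{X_i}) \to \infty$. The strategy is to exploit these huge volumes to cut out, on each $X_i$, a prime divisor $S_i$ which, after adjunction and birational modification, is a weak log Fano of dimension $d-1$ with $\varepsilon'$-lc singularities for some uniform $\varepsilon' = \varepsilon'(d,\varepsilon) > 0$; the inductive hypothesis will then place the $S_i$ in a bounded family, and a restricted-volume argument will close the contradiction.

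The first step is the production of non-klt centres through general points, carried out exactly as in the proof of Theorem~\ref{thm:effBir}. Standard multiplier-ideal and tie-breaking arguments (cf.~Claim~\ref{claim:dfnkc} and \cite[Sect.~6]{KollarSingsOfPairs}) show that $\vol(-K_{X_i}) \gg 1$ yields an effective divisor $\Delta_i \sim_{\mathbb{Q}} -\lambda_i K_{X_i}$ with $\lambda_i = O\bigl(\vol(-K_{X_i})^{-1/d}\bigr)$ such that $(X_i,\Delta_i)$ is not klt at a chosen general point $x_i \in X_i$. After tie-breaking one obtains a dominating family $\mathcal G_{X_i}$ of non-klt centres and may assume its general member has the smallest possible dimension $k$, and that it is the unique non-klt place of $(X_i,\Delta_i)$ over a neighbourhood of $x_i$. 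By iterating the cutting-down procedure — each time using the anticanonical volume restricted to the current centre to create a strictly smaller non-klt centre whenever that restricted volume is still too large — one can arrange after passing to a subsequence that either $k = 0$ (isolated centres, handled by Kawamata-Viehweg vanishing as in the effective birationality argument, yielding a contradiction directly) or $k = d-1$. In the latter case, further perturbation produces a prime divisor $S_i \subset X_i$ appearing with coefficient $1$ in a boundary $\mathbb{Q}$-linearly equivalent to $-\lambda_i K_{X_i}$, such that $(X_i,S_i)$ is plt near the general point of $S_i$.

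The main obstacle is the final reduction to the induction hypothesis. Adjunction $(K_{X_i} + S_i)|_{S_i} = K_{S_i} + B_{S_i}$ realises the normalisation of $S_i$, after a suitable log resolution, as a weak log Fano pair of dimension $d-1$; one must check that the singularities remain $\varepsilon'$-lc for a uniform $\varepsilon'$, using the $\varepsilon$-lc hypothesis on $X_i$ and inversion of adjunction applied to the different $B_{S_i}$. This is genuinely delicate, because $(X_i, S_i)$ is only lc, not $\varepsilon$-lc, along $S_i$, so the boundedness of $S_i$ has to be extracted through the stronger log-pair version of the induction (supplied, in the larger scheme, by boundedness of complements and the theory of generalised polarised pairs discussed in Section~\ref{ssec:BC}). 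Granting this, the inductive hypothesis places the $S_i$ in a bounded family, hence both $\vol(-K_{S_i})$ and $\vol\bigl((-K_{X_i})|_{S_i}\bigr)$ are bounded by a constant depending only on $d$ and $\varepsilon$. The final step is a restricted-volume estimate of Lazarsfeld-Mustaţă / Okounkov-body type applied to the pseudo-effective decomposition $-K_{X_i} \sim_{\mathbb{Q}} \lambda_i^{-1}(S_i + E_i)$, which combined with $\lambda_i^{-1} = O\bigl(\vol(-K_{X_i})^{1/d}\bigr)$ converts the uniform bound on $\vol\bigl((-K_{X_i})|_{S_i}\bigr)$ into a uniform bound on $\vol(-K_{X_i})$. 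The delicate coordination between the cutting-down procedure, the adjunction bookkeeping that controls $\varepsilon'$, and the restricted-volume calculation — making every constant uniform along the sequence — is where I expect the real difficulty of the theorem to lie.
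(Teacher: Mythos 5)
Your proposal takes a genuinely different route from the paper, and the difference matters. The paper does \emph{not} extract a divisorial non-klt centre and pass to it by adjunction. Instead, it constructs boundaries $B_i \sim_{\mathbb{Q}} -a_i K_{X_i}$ (with $a_i \to 0$) by an explicit discrepancy calculation so that the \emph{total log discrepancy} of $(X_i, B_i)$ equals a fixed positive $\varepsilon' \in (0,\varepsilon)$ — the distinguished divisor $D_i$ therefore has log discrepancy $\varepsilon' > 0$ and is not a non-klt place at all. One then runs a $(-D_i)$-MMP (allowed since $X_i$ is of Fano type, hence a Mori dream space) and terminates with a Mori fibre space $X_i \to Z_i$ with $\rho(X_i/Z_i)=1$ on which $D_i$ is relatively ample. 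The contradiction comes from the general fibre $F_i$, which is $\varepsilon$-lc Fano of dimension $<d$: the relation $-K_{X_i} \sim_{\mathbb{Q}} \frac{1-\varepsilon'}{a_i} D_i + (\text{effective})$ restricts to $-K_{F_i} \equiv s_i\, D_i|_{F_i}$ with $s_i \geq \frac{1-\varepsilon'}{a_i} \to \infty$, which is incompatible with Lemma~\ref{lem:5-2} (the finiteness, coming from boundedness in lower dimension, of possible ``Fano indices'' $r$ with $K + rD \equiv 0$). Only in the residual case where all the $Z_i$ are points does Birkar resort to covering families of non-klt centres and adjunction. Your proposal essentially tries to make that residual argument carry the entire proof, without the MMP reduction that makes it tractable.

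Your version has three concrete gaps. First, the claim that the cut-down procedure yields $k=0$ or $k=d-1$ after passing to a subsequence is unjustified: the non-klt centre produced by tie-breaking, and the ones obtained by iterated cutting, can get stuck at any dimension $1 \leq k \leq d-1$, and nothing in the construction forces the terminal case to be a divisor. Second — and you flag this yourself — even if you had a divisor $S_i$, the pair $(X_i, S_i)$ is only plt/lc, not $\varepsilon$-lc, along $S_i$; the induction hypothesis (boundedness of $\varepsilon$-lc Fanos in dimension $d-1$) does not apply to $(S_i, B_{S_i})$ without a uniform lower bound $\varepsilon'$ for the discrepancies of the different, which is precisely what the non-klt construction destroys. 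Invoking ``boundedness of complements and generalised pairs'' here is circular in spirit: the paper's entire point is that the volume bound can be deduced from the \emph{bare} boundedness hypothesis in one dimension lower via Lemma~\ref{lem:5-2}, without re-importing the heavy machinery. The paper's use of a discrepancy level $\varepsilon'>0$ (rather than $\leq 0$) is exactly what keeps it in the klt range and avoids this issue. Third, your final restricted-volume estimate needs $\tau \leq \lambda_i^{-1}$ where $\tau$ is the pseudoeffective threshold of $-K_{X_i}$ along $S_i$; the hypothesis $\lambda_i^{-1} S_i \leq -K_{X_i}$ (up to $\sim_{\mathbb{Q}}$) gives effectivity up to $t = \lambda_i^{-1}$ but says nothing about the bigness threshold beyond that, so the Lazarsfeld--Mustaţă integral formula does not deliver the inequality you want without further argument.
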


\subsection{Idea of application}

We have seen in Section~\ref{ssec:EBA} how to obtain boundedness criteria for
families of varieties from boundedness of volumes.  This makes
Theorem~\ref{thm:bvol} a key step in the inductive proof of
Theorem~\ref{thm:BAB}.

\subsection{Idea of proof for boundedness of volumes, following \cite[Sect.~9]{Bir16a}}

To illustrate the core idea of proof, we consider only the simplest cases and
make numerous simplifying assumptions, no matter how unrealistic.  The
assumption that $ε$-lc Fano varieties of dimension $d-1$ form a bounded family
will be used in the following form.

\begin{lem}[\protect{Consequence of boundedness, \cite[Lem.~2.22]{Bir16a}}]\label{lem:5-2}
  There exists a finite set $I ⊂ ℝ$ with the following property.  If $X$
  is an $ε$-lc Fano variety of dimension $d-1$, if $r ∈ ℝ^{≥ 1}$ and if
  $D$ is any non-zero integral divisor on $X$ such that $K_X + r·D \equiv 0$,
  then $r ∈ I$.  \qed
\end{lem}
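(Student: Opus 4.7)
The plan is to exploit the boundedness hypothesis to uniformly constrain the numerical classes of integral Weil divisors on the varieties $X$ in the family, and then to use $K_X + r·D \equiv 0$ to force $r$ into a discrete set.

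First, I would unpack the boundedness assumption: by hypothesis there is a proper morphism $u \colon 𝕏 → Y$ of finite type over $ℂ$ such that every $ε$-lc Fano $X$ of dimension $d-1$ occurs as a fibre. After a finite stratification, one may assume $Y$ is a disjoint union of smooth, connected pieces on each of which the fibrewise Picard number is bounded by some $ρ$, the relative canonical divisor $K_{𝕏/Y}$ has uniform Cartier index $N$, and both the class $[-K_X]$ and the lattice of Cartier Néron–Severi classes are locally constant.

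Secondly, I would introduce, for each $X$ in the family, the sublattice
$$
Λ_X \;:=\; \bigl\{\, [D'] \,:\, D' ∈ \Div(X)\,\bigr\} \;⊂\; N^1(X)_ℚ
$$
of numerical classes of integral Weil divisors. The uniform Cartier index forces $Λ_X ⊆ \tfrac{1}{N}·N^1(X)_{\mathrm{Cart}}$ and bounds its rank by $ρ$. In particular, $Λ_X ∩ ℝ·[-K_X]$ is a non-trivial rank-one sublattice (it contains $[NK_X]$), generated by $α_X·[-K_X]$ for some positive rational $α_X$; by local constancy of the underlying data in the bounded family, $α_X$ takes only finitely many values.

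Thirdly, any non-zero integral $D$ with $K_X + r·D \equiv 0$ has class $[D] = \tfrac{1}{r}·[-K_X] ∈ Λ_X$, so $\tfrac{1}{r} = k·α_X$ for some positive integer $k$. The constraint $r ≥ 1$ forces $1 ≤ k ≤ ⌊1/α_X⌋$, leaving only finitely many admissible values of $r$ for each $α_X$; the union over the finitely many possible $α_X$ yields the desired finite set $I$.

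The main obstacle is justifying that $α_X$ really takes only finitely many values, because when $X$ is not $ℚ$-factorial the lattice $Λ_X$ strictly contains the Néron–Severi lattice of Cartier divisors and need not behave well in families a priori. My remedy is to pass to a small $ℚ$-factorialization $\wtilde X → X$, which exists because $X$ is klt and which, by a relative MMP construction over the bounded base $Y$, again belongs to a bounded family: integral Weil divisors on $X$ correspond to non-exceptional integral Weil divisors on the $ℚ$-factorial $\wtilde X$, and uniform boundedness of $\Pic(\wtilde X)$ modulo numerical equivalence then translates into uniform finiteness of the $α_X$.
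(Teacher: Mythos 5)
Your overall strategy --- exploit boundedness to rigidify the numerical classes of integral divisors, then use $K_X + r\cdot D \equiv 0$ to force $r$ into a discrete set --- is the right one, and matches the intent of Birkar's cited lemma. However, the central step is not justified. You claim that because $K_{\bX/Y}$ has uniform Cartier index $N$ after stratification, the lattice $\Lambda_X$ of \emph{all} integral Weil divisor classes satisfies $\Lambda_X \subseteq \tfrac{1}{N}\,N^1(X)_{\mathrm{Cart}}$. This does not follow: the Cartier index of $K_X$ says nothing about the Cartier index (or the failure of $\bQ$-Cartierness) of an arbitrary Weil divisor $D$, and in fact for non-$\bQ$-factorial $X$ the class of $D$ need not live in $N^1(X)_{\bQ}$ at all, so $\Lambda_X$ is not even well-defined as stated. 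Your proposed remedy --- pass to a small $\bQ$-factorialization $\wtilde X \to X$ belonging to a bounded family --- addresses the second problem but not the first: $\bQ$-factoriality does not give a uniform Cartier index for Weil divisors, so you still have no bound on $\alpha_X$ (equivalently, on the divisibility of $[-K_X]$ inside $\Lambda_{\wtilde X}$). You would need to invoke, and justify, a genuine statement about uniform bounds on torsion in class groups across a bounded family, which you do not do; as written the argument has a hole exactly where the finiteness of the set of $\alpha_X$ should come from.

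There is a cleaner route that avoids Cartier indices entirely. Boundedness gives fixed $m, v \in \bN$ such that for every $\varepsilon$-lc Fano $X$ of dimension $d-1$, the divisor $A := -m\cdot K_X$ is very ample Cartier with $A^{d-1} \leq v$. For any non-zero integral Weil divisor $D$, the degree $D \cdot A^{d-2}$ is an integer: a general complete intersection curve $C$ of $d-2$ members of $|A|$ lies in the smooth locus of $X$ (the singular locus has codimension $\geq 2$), so $D\cdot C$ is a well-defined integer independent of the $\bQ$-Cartier status of $D$. If $K_X + r\cdot D \equiv 0$ with $r \geq 1$, then $D$ is numerically proportional to an ample class with positive ratio, hence $D\cdot A^{d-2} > 0$, so it is a \emph{positive} integer, and
$$
D\cdot A^{d-2} \;=\; \frac{1}{mr}\,A^{d-1} \;\leq\; \frac{v}{mr}, \qquad\text{so}\qquad r \;=\; \frac{A^{d-1}}{m\cdot\bigl(D\cdot A^{d-2}\bigr)}.
$$
Here $A^{d-1} \in \{1,\dots,v\}$ and $D\cdot A^{d-2} \in \{1,\dots,\lfloor v/m\rfloor\}$ (the latter bound because $r \geq 1$), so $r$ lies in the explicit finite set $\bigl\{\tfrac{p}{mq} : 1 \leq p \leq v,\ 1 \leq q \leq \lfloor v/m \rfloor\bigr\} \cap [1,\infty)$. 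This integrality-of-degree argument is what actually does the work, and can be slotted into your lattice framework in place of the unjustified Cartier-index step.
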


We argue by contradiction and assume that there exists a sequence
$(X_i)_{i ∈ ℕ}$ of $ε$-lc weak Fanos of dimension $d$ such that the sequence
of volumes is strictly increasing, with $\lim \vol(X_i) = ∞$.  For
simplicity of the argument, assume that all $X_i$ are Fanos rather than weak
Fanos, and that they are $ℚ$-factorial.  For the general case, one needs to
consider the maps defined by multiples of $-K_X$ and take small
$ℚ$-factorialisations.

Choose a rational $ε'$ in the interval $(0,ε)$.  Using explicit discrepancy
computations of boundaries of the form $\frac{1}{N}·B'_i$, for
$B'_i ∈ \lvert -N·K_{X_i} \rvert$ general, \cite[Cor.~2.32]{KM98}, we find a
decreasing sequence $(a_i)_{i ∈ ℕ}$ of rationals, with $\lim a_i = 0$, and
boundaries $B_i ∈ ℚ\Div(X_i)$ with the following properties.
\begin{enumerate}
\item For each $i$, the divisor $B_i$ is $ℚ$-linearly equivalent to
  $-a_i·K_{X_i}$.
  
\item The volumes of the $B_i$ are bounded from below, $(2d)^d < \vol(B_i)$.
  
\item\label{il:5-1-3} The pair $(X_i, B_i)$ has total log discrepancy equal to
  $ε'$.
\end{enumerate}
Passing to a subsequence, we may assume that $a_i < 1$ for every $i$.  Again,
discrepancy computation show that this allows us to find sufficiently general,
ample $H_i ∈ ℚ\Div(X_i)$ that are $ℚ$-linearly equivalent to $-(1-a_i)·K_{X_i}$
and have the property that $(X, B_i+H_i)$ are still $ε'$-lc.

Given any index $i$, Item~\ref{il:5-1-3} implies that there exists a prime
divisor $D'_i$ on a birational model $X'_i$ that realises the total log
discrepancy.  For simplicity, consider only the case where one can choose
$X_i = X'_i$ for every $i$, and therefore find prime divisors $D_i$ on $X_i$
that appear in $B_i$ with multiplicity $1-ε'$.  Without that simplifying
assumption one needs to invoke \cite[Cor.~1.4.3]{BCHM10}, in order to replace
the variety $X_i$ by a model that ``extracts'' the divisor $D'_i$.  In summary,
we can write
\begin{equation}\label{eq:5-2-4}
  -K_{X_i} \sim_{ℚ} \frac{1}{a_i}·B_i = \frac{1-ε'}{a_i}·D_i + (\text{effective}).
\end{equation}

As a next step, recall from Remark~\ref{rem:MoriDream} that the $X_i$ are Mori
dream spaces.  Given any $i$, we can therefore run the $-D_i$-MMP, which
terminates with a Mori fibre space on which the push-forward of $D_i$ is
relatively ample.  Again, we ignore all technical difficulties and assume that
$X_i$ itself is the Mori fibre space, and therefore admits a fibration
$X_i → Z_i$ with relative Picard number $ρ(X_i/Z_i) = 1$ such that $D_i$ is
relatively ample.  Let $F_i ⊆ X_i$ be a general fibre.  Adjunction and standard
inequalities for discrepancies imply that $F_i$ is again $ε$-lc and Fano.  The
statement about the relative Picard number implies that any effective divisor on
$X_i$ is either trivial or ample on $F_i$.  In particular,
Equation~\ref{eq:5-2-4} implies that $-K_{F_i} \equiv s_i·D_i$, where
$s_i ≥ \frac{1-ε'}{a_i}$ goes to infinity.  If $\dim F_i = d-1$, or more
generally if $\dim F_i < d$ for infinitely many indices $i$, this contradicts
Lemma~\ref{lem:5-2} and therefore proves Theorem~\ref{thm:bvol}.

It remains to consider the case where the $Z_i$ are points.  Birkar's proof in
this case is similar in spirit to the argumentation above, but technically much
\emph{more} demanding.  He creates a covering family of non-klt centres, uses
adjunction on these centres and the assumption that $ε$-lc Fano varieties of
dimension $d-1$ form a bounded family to obtain a contradiction.  \qed

%
%
\svnid{$Id: 07-lct.tex 79 2019-01-07 13:21:51Z kebekus $}

\section{Bounds for lc thresholds}
\label{sec:lcthres}
\subversionInfo

The last of Birkar's core results presented here pertains to log canonical
thresholds of anti-canonical systems; this is the main result of Birkar's second
paper \cite{Bir16b}.  It gives a positive answer to a well-known conjecture of
Ambro \cite[p.~4419]{MR3556423}.  With the notation introduced in
Section~\ref{sec:singofpairs}, the result is formulated as follows.

\begin{thm}[\protect{Lower bound for lc thresholds, \cite[Thm.~1.4]{Bir16b}}]\label{thm:lct}
  Given $d ∈ ℕ$ and $ε ∈ ℝ^+$, there exists $t ∈ ℝ^+$ with the following
  property.  If $(X,B)$ is any projective $ε$-lc pair of dimension $d$ and if
  $Δ := -(K_X+B)$ is nef and big, then $\lct \bigl(X,\, B,\, |Δ|_ℝ \bigr) ≥ t$.
\end{thm}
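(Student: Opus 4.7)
The plan is to argue by contradiction and induction on $d$, combining the four core results presented in Sections~\ref{sec:bcomp}--\ref{sec:volumes}. Suppose the conclusion fails; then for each $i \in ℕ$ one obtains an $ε$-lc projective pair $(X_i, B_i)$ of dimension $d$ with $Δ_i := -(K_{X_i}+B_i)$ nef and big, divisors $D_i \in |Δ_i|_{ℝ}$, and a sequence $t_i \to 0^+$ such that $(X_i, B_i + t_i D_i)$ is lc but not klt. After standard tie-breaking (as in the construction of dominating families of non-klt centres recalled in Section~\ref{sec:EB}), I may assume that each $(X_i, B_i + t_i D_i)$ has a unique non-klt place and that the associated non-klt centre $V_i \subseteq X_i$ is of minimal dimension. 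Since $B_i \geq 0$ has coefficients in $[0, 1-ε]$, the pair $(X_i, 0)$ is itself $ε$-lc and $-K_{X_i} = B_i + Δ_i$ is nef and big, so each $X_i$ is an $ε$-lc weak Fano. In particular, effective birationality (Theorem~\ref{thm:effBir}) supplies a fixed $m = m(d,ε)$ such that $|-m K_{X_i}|$ defines a birational map, and boundedness of complements (Theorem~\ref{thm:boundOfCompl}) provides a bounded complement of $K_{X_i}$.

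The inductive hypothesis, combined with the volume bound (Theorem~\ref{thm:bvol}) and the boundedness criterion of Proposition~\ref{prop:4.3a}, yields Theorem~\ref{thm:BAB} in all dimensions $< d$, and hence a bound on $\vol(-K_{X_i})$ in dimension $d$. The argument now splits according to $\dim V_i$. If $V_i$ is a prime divisor for infinitely many $i$, then the coefficient of $V_i$ in $B_i + t_i D_i$ equals $1$. Since the coefficient of $V_i$ in $B_i$ is at most $1-ε$, this forces $\mult_{V_i} D_i \geq ε/t_i \to \infty$. But $D_i \sim_{ℝ} -(K_{X_i}+B_i)$, whose volume is bounded, so standard multiplicity-volume inequalities (after intersecting with general members of $|-mK_{X_i}|$) give the desired contradiction.

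If $\codim V_i \geq 2$, I would apply subadjunction in the guise of the canonical bundle formula of Section~\ref{ssec:gpfib}: on a suitable log resolution of $V_i$ one obtains a generalised polarised pair $(W_i, B_{W_i} + M_{W_i})$ whose generalised log discrepancies are controlled in terms of $(d, ε)$, whose boundary coefficients lie in a controlled (DCC) set thanks to the ACC for log canonical thresholds of \cite{MR3224718}, and whose descended anticanonical class is still big and admits a natural non-klt event induced by $t_i D_i$. Applying the inductive hypothesis --- formulated for generalised pairs --- to this lower-dimensional object would yield a strictly positive lower bound on $t_i$, contradicting $t_i \to 0$.

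The main obstacle is precisely this codimension-$\geq 2$ case. For the induction to close, the theorem (and in fact nearly every tool invoked along the way) must be formulated and proved in the broader setting of generalised polarised pairs rather than classical pairs; this is the technical point foreshadowed in the disclaimer of Section~3 and requires redeveloping the theory of complements, effective birationality, and volume bounds for generalised pairs. A secondary difficulty is the interface with boundedness of complements: the boundary coefficients on $W_i$ must be corralled into a finite set $\mathcal{R}$ before Theorem~\ref{thm:boundOfCompl} applies, and this requires a careful perturbation of $B_i + t_i D_i$ using the global ACC for log canonical thresholds together with $ε'$-type truncations in the spirit of the simplification carried out in Section~\ref{ssec:BC}.
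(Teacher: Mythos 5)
Your proposal takes a genuinely different route from the paper, and unfortunately one of the steps it leans on fails. The concrete error is the claim that each $X_i$ is an $ε$-lc weak Fano: you compute $-K_{X_i} = B_i + Δ_i$ and infer nef and big, but adding the effective divisor $B_i$ to the nef divisor $Δ_i$ can destroy nefness. The hypothesis only yields that $(X_i, B_i)$ is a klt weak log Fano pair, hence that $X_i$ is of \emph{Fano type}; the hypotheses of Theorem~\ref{thm:effBir} and Theorem~\ref{thm:bvol} are not satisfied by $X_i$ itself, so neither effective birationality nor the volume bound can be invoked as you do. For the same reason the ``standard multiplicity-volume inequality'' in your divisorial case is not available: bounding $\mult_{V_i} D_i$ requires a fixed very ample polarisation of controlled degree, which does not come for free and is precisely what Proposition~\ref{prop:lctbd} is set up to deliver.

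The deeper gap is the codimension-$\geq 2$ branch. You propose subadjunction onto the non-klt centre $V_i$ and induction on dimension, but there is no reason the resulting lower-dimensional (generalised) pair should again be $ε$-lc, nor that its descended anti-log-canonical class remains nef and big; both degrade under subadjunction, so the induction does not close. The paper avoids this entirely. In the key Lemma~\ref{l-local-lct-bab-to-global-lct}, Birkar \emph{extracts the threshold place} $T$ realising $a_{\log}(T,X,B+s L)=ε'$ on a model $Y → X$ (rather than descending to a centre), observes that $(Y, B_Y + sL_Y)$ is klt weak log Fano so $Y$ is of Fano type, and then runs a $(-T)$-MMP to a Mori fibre space $Y' → Z'$. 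If $\dim Z' > 0$ one restricts to a general fibre and applies the bounded-degree lc-threshold result (Proposition~\ref{prop:lctbd}) in lower dimension; if $Z'$ is a point one applies the Picard-number-one case (Proposition~\ref{p-bnd-lct-global-weak}). The entire apparatus behind Proposition~\ref{prop:lctbd} --- local-to-global complements (Lemma~\ref{lem:A6-6}), the multiplicity bound at an lc place via reduction to toric Fano varieties and Borisov-Borisov (Lemma~\ref{lem:A6-7}) --- is the actual engine of the proof and has no counterpart in your sketch. The generalised-pair difficulty you flag is real elsewhere in Birkar's work (Section~\ref{ssec:BC}, Case~\ref{il:sc2}), but it is not the bottleneck for Theorem~\ref{thm:lct}; the bottleneck is the MMP-on-the-extracted-model and the toric reduction, which your argument would need to be rebuilt around.
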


Though this is not exactly obvious, Theorem~\ref{thm:lct} can be derived from
boundedness of $ε$-lc Fanos, Theorem~\ref{thm:BAB}.  One of the core ideas in
Birkar's paper \cite{Bir16b} is to go the other way and prove
Theorem~\ref{thm:lct} using boundedness, but only for \emph{toric} Fano
varieties, where the result has been established by Borisov-Borisov in
\cite{MR1166957}.

\subsection{Idea of application}

As pointed out in Section~\ref{ssec:EBA}, bounding lc thresholds from below
immediately applies to the boundedness problem.  To illustration the
application, consider the following corollary, which proves
Theorem~\ref{thm:BAB} in part.

\begin{cor}[Boundedness of $ε$-lc Fanos]\label{cor:bab}
  Given $d ∈ ℕ$ and $ε ∈ ℝ^+$, the family $\mathcal{X}^{\Fano}_{d,ε}$ of
  $ε$-lc Fanos of dimension $d$ is bounded.
\end{cor}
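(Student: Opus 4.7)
The plan is to argue by induction on the dimension $d$, using Proposition~\ref{prop:4.3a} as the boundedness criterion. The base case $d = 1$ is trivial since the only Fano curve is $ℙ^1$, so assume $d ≥ 2$ and that $\mathcal{X}^{\Fano}_{d-1,ε}$ has already been shown to be bounded. Every $X ∈ \mathcal{X}^{\Fano}_{d,ε}$ is automatically a klt weak Fano of dimension $d$: the condition $ε > 0$ gives $a_{\log}(X, 0) ≥ ε > 0$, hence $X$ is klt, and $-K_X$ ample is in particular nef and big. So hypothesis~(i) of Proposition~\ref{prop:4.3a} is free; it remains to produce a uniform volume bound $v$ and a uniform sequence $(t_ℓ)_{ℓ ∈ ℕ}$ that work for every $X$ in the family.

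For hypothesis~\ref{il:x3}, the inductive hypothesis provides exactly the input that Theorem~\ref{thm:bvol} requires, so applying it produces a number $v = v(d, ε)$ with $\vol(-K_X) ≤ v$ for every $X ∈ \mathcal{X}^{\Fano}_{d,ε}$. For hypothesis~\ref{il:x4}, I would apply Theorem~\ref{thm:lct} to the pair $(X, 0)$ with $Δ = -K_X$: since $X$ is $ε$-lc and $-K_X$ is nef and big, there is a number $t = t(d, ε) > 0$ with $\lct(X,\, |-K_X|_ℝ) ≥ t$. Set $t_ℓ := t/(2ℓ)$. For any $L ∈ |-ℓ K_X|$, the divisor $\tfrac{1}{ℓ} L$ is $ℚ$-linearly and hence $ℝ$-linearly equivalent to $-K_X$, so it lies in $|-K_X|_ℝ$; consequently its lc threshold on $X$ is at least $t$, and since $t/2 < t$ the pair $(X,\, t_ℓ · L) = \bigl(X,\, \tfrac{t}{2} · \tfrac{1}{ℓ} L\bigr)$ is klt, as required.

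With the three hypotheses of Proposition~\ref{prop:4.3a} verified for constants depending only on $d$ and $ε$, that proposition concludes that $\mathcal{X}^{\Fano}_{d,ε}$ is bounded, closing the induction. The main obstacle is, of course, not at the level of this corollary --- where the argument is essentially formal --- but in the substantial inputs that feed into it: Theorem~\ref{thm:bvol} is proved via covering families of non-klt centres and adjunction, Theorem~\ref{thm:lct} goes through a reduction to the toric case handled by Borisov-Borisov, and Proposition~\ref{prop:4.3a} itself rests on boundedness of complements (Theorem~\ref{thm:boundOfCompl}) and effective birationality (Theorem~\ref{thm:effBir}). In Birkar's actual treatment these are proved in a single intertwined induction rather than cleanly in sequence, so the above induction over $d$ should really be understood as an \emph{a posteriori} repackaging of the argument.
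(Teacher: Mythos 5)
Your proof matches the paper's argument exactly: invoke Proposition~\ref{prop:4.3a}, feed it the volume bound from Theorem~\ref{thm:bvol} and set $t_\ell := t/(2\ell)$ using the lc-threshold bound of Theorem~\ref{thm:lct} and Remark~\ref{rem:sflct}. You are in fact a bit more scrupulous than the paper's terse proof in spelling out the induction on $d$ that supplies the hypothesis of Theorem~\ref{thm:bvol}, and your closing caveat that this induction is really an \emph{a posteriori} repackaging of Birkar's intertwined argument is accurate.
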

\begin{proof}
  We aim to apply Proposition~\ref{prop:4.3a} to the family
  $\mathcal{X}^{\Fano}_{d,ε}$.  With Theorem~\ref{thm:bvol} (``Bound on
  volumes'') in place, it remains to satisfy Condition~\ref{il:x4} of
  Proposition~\ref{prop:4.3a}: we need a sequence $(t_{ℓ})_{ℓ ∈ ℕ}$ such that
  the following holds.
  \begin{quotation}
    For every $ℓ ∈ ℕ$, for every $X ∈ \mathcal{X}^{\Fano}_{d,ε}$ and every
    $L ∈ \lvert -ℓ·K_X \rvert$, the pair $(X, t_ℓ·L)$ is klt.
  \end{quotation}
  But this is not so hard anymore.  Let $t ∈ ℝ^+$ be the number obtained by
  applying Theorem~\ref{thm:lct}.  Given a number $ℓ ∈ ℕ$, a variety
  $X ∈ \mathcal{X}^{\Fano}_{d,ε}$ and a divisor $L ∈ |-ℓ·K_X|$, observe that
  $\frac{1}{ℓ}·L ∈ |-K_X|_{ℝ}$ and recall from Remark~\vref{rem:sflct} that
  $(X, \frac{t}{2ℓ}·L)$ is klt.  We can thus set $t_{ℓ} := \frac{t}{2ℓ}$.
\end{proof}

\subsection{Preparation for the proof of Theorem~\ref*{thm:lct}: $ℝ$-linear systems of bounded degrees}
\label{ssec:lctbd}
  
To prepare for the proof of Theorem~\ref{thm:lct}, we begin with a seemingly
weaker result that provides bounds for lc thresholds, but only for $ℝ$-linear
systems of bounded degrees.  This result will be used in Section~\ref{ssec:soft}
to prove Theorem~\ref{thm:lct} in an inductive manner.

\begin{prop}[\protect{LC thresholds for $ℝ$-linear systems of bounded degrees, \cite[Thm.~1.6]{Bir16b}}]\label{prop:lctbd}
  Given $d$, $r ∈ ℕ$ and $ε ∈ ℝ^+$, there exists $t ∈ ℝ^+$ with the following
  property.  If $(X,B)$ is any projective, $ε$-lc pair of dimension $d$, if
  $A ∈ \Div(X)$ is very ample with $A-B$ ample and $[A]^d ≤ r$, then
  $\lct \bigl(X,\, B,\, |A|_ℝ \bigr) ≥ t$.
\end{prop}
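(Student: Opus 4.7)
My plan is to deduce Proposition~\ref{prop:lctbd} directly from Proposition~\ref{p-non-term-places}, after packaging the data $(X, B, A)$ into a suitable bounded family of couples. The argument is neither inductive on the dimension nor by contradiction; the key point is to recognise that the hypotheses already force all of the relevant data into a bounded family depending only on $d$ and $r$.

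I would first set up the bounded family. Since $A$ is very ample with $[A]^d ≤ r$ on a $d$-dimensional normal projective variety $X$, a standard induction by cutting with general hyperplane sections yields a bound for $h⁰(X, A)$ in terms of $d$ and $r$ alone; hence $|A|$ embeds $X$ into a projective space of bounded dimension as a subvariety of bounded degree. The usual Hilbert scheme argument then places the polarized varieties $(X, A)$ in a bounded family. The ampleness of $A - B$ forces $B · A^{d-1} < [A]^d ≤ r$, and any $A' ∈ |A|$ satisfies $A' · A^{d-1} = [A]^d ≤ r$. Setting $T := \supp(B + A')$, the reduced divisor $T$ has $A$-degree at most $2r$, so a relative Chow-variety argument over the bounded family of the polarized varieties $(X, A)$ puts the couples $(X, T)$ into a bounded family $\cP = \cP(d, r)$. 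By construction $\supp(B + T) = T$.

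Next, I would apply Proposition~\ref{p-non-term-places} to $\cP$ and $ε' := ε$, obtaining a constant $δ = δ(d, r, ε) > 0$, and claim that $t := δ$ has the property required by Proposition~\ref{prop:lctbd}. To verify this, let $(X, B, A)$ be any datum satisfying the hypotheses and let $D ∈ |A|_ℝ$ be arbitrary. Choose $A' ∈ |A|$ as above and set $\what{N} := δ · A'$. Then $\supp \what{N} ⊂ T$ and every coefficient of $\what{N}$ equals $δ$. Moreover, $δ D$ is an effective $ℝ$-divisor with $δ D \sim_ℝ δ A' = \what{N}$, hence $δ D ∈ |\what{N}|_ℝ$. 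Proposition~\ref{p-non-term-places}, applied to $(X, B)$, $T$ and $\what{N}$, then gives that $(X, B + δ D)$ is klt. Since $D ∈ |A|_ℝ$ was arbitrary, we obtain $\lct \bigl( X,\, B,\, |A|_ℝ \bigr) ≥ δ$, as required.

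The main technical difficulty I foresee lies in the boundedness step of the second paragraph: one needs to justify rigorously that the couples $(X, T)$, with $X$ potentially singular and $T$ only a reduced Weil divisor, really do form a single bounded family of couples. The polarization $A$ controls degrees in the ambient projective space, but producing a genuine parameter scheme of finite type for Weil divisors of bounded degree over a bounded family of (possibly singular) projective varieties requires some care with Chow/Hilbert schemes of such divisors. A secondary, minor point is that $T$ is not canonical, but depends on the auxiliary choice of $A' ∈ |A|$; all such choices, however, yield couples of the same uniformly bounded degree and hence fit into a single bounded family, so this non-canonicity is harmless.
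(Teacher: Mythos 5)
There is a genuine gap in your boundedness step, and it is not the Chow/Hilbert-scheme technicality you flag at the end --- it is more fundamental. You claim that the reduced divisor $T := \supp(B + A')$ has $A$-degree at most $2r$, inferring this from $B \cdot A^{d-1} < [A]^d \leq r$. But that bound controls the \emph{weighted} degree of $B$, not the degree of its reduced support. The $\varepsilon$-lc hypothesis caps the coefficients of $B$ from above (by $1 - \varepsilon$), but gives no lower bound on them. A boundary of the form $B = \frac{1}{N} B_0$, with $B_0$ integral, effective, and of arbitrarily large $A$-degree, can satisfy both the $\varepsilon$-lc condition and the ampleness of $A - B$ once $N$ is large, while $\supp B$ has unbounded degree. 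Hence the couples $(X, T)$ do \emph{not} lie in any single bounded family $\cP(d, r)$, and Proposition~\ref{p-non-term-places} does not apply as you use it. (The rest of your argument --- the observation that $\delta D \in |\delta A'|_{\mathbb{R}}$ and the resulting klt conclusion --- would go through if boundedness held, so this is the only, but decisive, obstruction.)

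This is not easily repaired along your lines, and the structure of Birkar's argument indicates why: Proposition~\ref{prop:lctbd} is \cite[Thm.~1.6]{Bir16b}, one of the main results of that paper, and its proof is a substantial induction on dimension. After reducing to $\mathbb{Q}$-factorial $X$ (via simultaneous small $\mathbb{Q}$-factorialisations over the bounded family of polarised varieties $(X, A)$), one extracts, for a given $L \in |A|_{\mathbb{R}}$, a prime divisor $T$ over $X$ computing the critical log discrepancy, and one needs to bound $\mult_T \nu^* L$. This is done \emph{not} through Proposition~\ref{p-non-term-places} applied to a support coming from $B$, but through Lemma~\ref{lem:A6-6}, which produces a ``local'' complement $\Lambda$ with $T$ an lc place of $(X,\Lambda)$, $n\cdot\Lambda$ integral, and $m\cdot A - \Lambda$ ample for universal $n,m$, followed by Lemma~\ref{lem:A6-7}, which bounds $\mult_T \nu^* L$. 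The crucial difference from your proposal is that the auxiliary divisor $\Lambda$ is \emph{constructed} to have bounded degree and bounded denominators --- precisely the two properties your $T = \supp(B + A')$ fails to inherit from $B$.
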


\begin{rem}\label{rem:6-4}
  The condition on the intersection number, $[A]^d ≤ r$ implies that $X$ belongs
  to a bounded family of varieties.  More generally, if we choose $A$ general in
  its linear system, then $(X,A)$ belongs to a bounded family of pairs.
\end{rem}

The proof of Proposition~\ref{prop:lctbd} is sketched below.  It relies on two
core ingredients.  Because of their independent interest, we formulate them
separately.

\begin{setting}\label{set:6-5}
  Given $d$, $r ∈ ℕ$ and $ε ∈ ℝ^+$, we consider projective, $ε$-lc pairs $(X,B)$
  of dimension $d$ where $X$ is $ℚ$-factorial, equipped with the following
  additional data.
  \begin{enumerate}
  \item A very ample divisor $A ∈ \Div(X)$, with $A-B$ ample and $[A]^d ≤ r$.
    
  \item An effective divisor $L ∈ ℝ\Div(X)$, with $A-L$ ample.

  \item A birational morphism $ν : Y → X$ of normal projective varieties, and
    a prime divisor $T ∈ \Div(Y)$ whose image is a point $x ∈ X$.
\end{enumerate}
\end{setting}

\begin{lem}[\protect{Existence of complements, \cite[Prop.~5.9]{Bir16b}}]\label{lem:A6-6}
  Given $d$, $r ∈ ℕ$ and $ε ∈ ℝ^+$, assume that Proposition~\ref{prop:lctbd}
  holds for varieties of dimension $d-1$.  Then, there exist integers $n$,
  $m ∈ ℕ$ and a real number $0 < ε' < ε$, with the following property.  Whenever
  we are in Setting~\ref{set:6-5}, and whenever there exists a number $t < r$
  such that
  \begin{enumerate}
  \item the pair $(X, B+t·L)$ is $ε'$-lc, and
    
  \item the log discrepancy is realised by $T$, that is
    $a_{\log}(T,X,B+t·L) = ε'$,
  \end{enumerate}
  Then there exists an effective divisor $Λ ∈ ℚ\Div(X)$ such that
  \begin{enumerate}
  \item the divisor $n·Λ$ is integral,
    
  \item the tuple $(X,Λ)$ is lc near $x$, and $T$ is an lc place of $(X,Λ)$, and
    
  \item the divisor $m·A-Λ$ is ample.  \qed
  \end{enumerate}
\end{lem}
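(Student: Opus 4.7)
The guiding idea is a ``lift a complement to~$X$ from a divisor extracting~$T$'' argument: extract $T$ on a suitable birational model, run an MMP that turns $T$ into a log canonical place sitting on a Mori-type structure, use adjunction to descend to a pair of dimension $d-1$, apply the inductive hypothesis there to produce a bounded complement, and finally lift that complement back to~$X$ to obtain~$\Lambda$. The ampleness of $mA-\Lambda$ will be read off from the fact that the hypothesis $[A]^d\le r$ places the couple $(X,A)$ in a bounded family (Remark~\ref{rem:6-4}), so the ``degree'' of anything we produce from a universal complement is automatically bounded.

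First I would choose $\varepsilon'<\varepsilon$ and, given $(X,B)$, $L$ and $t$ as in the statement, pass to a $\mathbb{Q}$-factorial birational model $\phi\colon W\to X$ that extracts exactly $T$; this is possible by \cite[Cor.~1.4.3]{BCHM10} applied to $(X,B+tL)$, precisely because $a_{\log}(T,X,B+tL)=\varepsilon'$ realises the minimal log discrepancy. Writing
$$
K_W+B_W+tL_W=\phi^*(K_X+B+tL),
$$
the coefficient of $T$ in $B_W+tL_W$ equals $1-\varepsilon'$. A small perturbation $\Gamma_W:=B_W+tL_W+\varepsilon'T$ converts $T$ into a component of coefficient one and keeps the rest of the boundary $\varepsilon'$-lc away from~$T$. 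I would then run a carefully chosen MMP on $W$, negative with respect to an appropriate combination of $-T$ and an anti-ample perturbation, to reach a model on which $T$ is either contracted to a divisorial lc centre or dominates the base of a Mori fibre structure; that $X$ is of Fano type modulo a boundary coming from $A-B$ and $A-L$ (both ample) guarantees that such MMPs exist and terminate.

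On this final model, adjunction yields a projective pair $(T,B_T)$ of dimension $d-1$, with $B_T$ having coefficients in a standard set coming from the different of $\Gamma_W$ along $T$. Now comes the use of the induction. By Proposition~\ref{prop:lctbd} in dimension $d-1$ together with boundedness of complements in dimension $d-1$ (Theorem~\ref{thm:boundOfCompl}, which in Birkar's induction is already available one dimension down), we obtain a universal integer $n$ and an $n$-complement $B_T^+$ of $K_T+B_T$. The usual lifting machinery---twist by an ample divisor, apply Kawamata-Viehweg vanishing to the ideal sequence of $T$ on the relevant model, exactly as in Case~\ref{il:sc1} of Section~\ref{ssec:BC}---extends the section corresponding to $B_T^+$ to a section on the ambient model. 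Pushing forward to~$X$ produces the divisor $\Lambda$; by construction, $n\Lambda$ is integral, $(X,\Lambda)$ is lc near $x$, and $T$ is an lc place (the latter because the complement on $T$ is effective and the extension is forced to vanish along $T$ to the correct order).

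The main obstacle is the final clause: finding a uniform $m$ with $mA-\Lambda$ ample. This does not follow abstractly from the construction of $\Lambda$; one must use that $X$ lies in a bounded family, so that the Néron-Severi groups and intersection numbers of the relevant models vary in a bounded fashion. Concretely, after replacing $A$ by a general element of its linear system, $(X,\mathrm{Supp}\,A)$ is log bounded, whence the coefficients and multiplicities of divisors produced from a universal $n$-complement are bounded in terms of $d$, $r$ and~$\varepsilon$. Making this quantitative---so that the same $m$ works for \emph{all} inputs---requires a careful, uniform choice in the MMP steps and in the extension of the complement, and is the technically most delicate point; the rest of the argument is the by-now-standard ``lift a complement along an extracted divisor'' pattern.
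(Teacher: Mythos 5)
Your proposal captures the broad shape of the argument---extract $T$, give it coefficient one, produce a bounded complement, push forward---but it diverges from the route the paper describes, and it contains gaps that are not small.

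The Commentary following Lemma~\ref{lem:A6-6} makes clear that Birkar's proof rests on the \emph{relative} boundedness of complements (\cite[Thm.~1.8]{Bir16a}, tied to \cite[Thm.~1.7]{Bir16b}): after modifying $Y$ he produces $\Lambda_Y$ with $(Y,\Lambda_Y)$ lc near $T$ and $n\cdot(K_Y+\Lambda_Y)$ linearly equivalent to $0$ \emph{relative to $X$}, and then sets $\Lambda := \nu_*\Lambda_Y$. That local-to-global complement over $X$ is the engine. You instead try to descend to a pair on $T$ of dimension $d-1$, apply \emph{absolute} boundedness of complements in dimension $d-1$, and lift; this is essentially an ad hoc re-proof of a special case of the relative complements theorem, which is already available as a black box from \cite{Bir16a}, and your account of the logical status of Theorem~\ref{thm:boundOfCompl} (``one dimension down'') is off: it is available in all dimensions at this stage of Birkar's induction.

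More importantly, the steps you sketch do not quite work as written. Since $\nu(T)$ is the single point $x\in X$, the divisor $T\subset Y$ carries no intrinsic positivity, and there is no Mori fibre structure or absolute Fano behaviour to appeal to; the vanishing needed to lift sections from $T$ must be \emph{relative over $X$} (an $R^1\nu_*$ vanishing), not the global Kawamata--Viehweg argument of Case~\ref{il:sc1} in Section~\ref{ssec:BC} that you cite. Your MMP step---aiming to make $T$ ``contracted to a divisorial lc centre or dominate the base of a Mori fibre structure''---does not make sense in this local setting; any MMP must be run over $X$ and must not contract $T$. You also do not explain what the inductive hypothesis, Proposition~\ref{prop:lctbd} in dimension $d-1$, is actually for. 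It is needed to control singularities along the adjunction: it guarantees that the pair induced on $T$ is $\varepsilon''$-lc for a universal $\varepsilon''$ with a fixed coefficient set, which is exactly the precondition for invoking complements; simply placing it ``together with'' the complements theorem does not do the work. Finally, the last clause---ampleness of $m\cdot A - \Lambda$ for a uniform $m$---is the quantitative point the lemma is about, and you explicitly leave it open. The correct argument is that the bounded relative complement bounds $\Lambda_Y$ against a divisor pulled back from $X$, and the log-boundedness of $(X,A)$ from Remark~\ref{rem:6-4} (together with $A-B$ and $A-L$ ample) turns this into a uniform bound on the coefficients of $\Lambda$ against $A$; this needs to be carried out, not deferred as a ``delicate point.''
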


\begin{com*}
  Lemma~\ref{lem:A6-6} is another existence-and-boundedness result for
  complements, very much in the spirit of what we have seen in
  Section~\ref{sec:bcomp}.  The relation to complements is made precise in
  \cite[Thm.~1.7]{Bir16b}, which is a core ingredient in Birkar's proof.  In
  fact, after some birational modification of $Y$, Birkar finds a divisor
  $Λ_Y ∈ \Div(Y)$ such that $(Y, Λ_Y)$ is lc near $T$ and such that
  $n·(K_Y + Λ_Y)$ is linearly equivalent to $0$, relative to $X$ and for some
  bounded number $n ∈ ℕ$.  As Birkar points out in \cite[p.~16]{Bir18}, one can
  think of $K_Y + Λ_Y$ as a local-global type of complement.  He then takes $Λ$
  to be the push-forward of $Λ_Y$ and proves all required properties.
\end{com*}

\begin{lem}[\protect{Bound on multiplicity at an lc place, \cite[Prop.~5.7]{Bir16b}}]\label{lem:A6-7}
  Given $d$, $r$ and $n ∈ ℕ$ and $ε ∈ ℝ^+$, assume that
  Proposition~\ref{prop:lctbd} holds for varieties of dimension $\le d-1$.
  Then, there exists $q ∈ ℝ^+$, with the following property.  Whenever we are in
  Setting~\ref{set:6-5}, whenever $a(T,X,B)\le 1$, and whenever a divisor
  $Λ ∈ ℚ\Div(X)$ is given that satisfies the following conditions,
  \begin{enumerate}
  \item $Λ$ is effective and $n·Λ$ is integral,
    
  \item $A-Λ$ is ample,
    
  \item $(X, Λ)$ is lc near $x$, and $T$ is an lc place of $(X,Λ)$,
  \end{enumerate}
  then $T$ appears in the divisor $ν^* L$ with multiplicity
  $\mult_T ν^*L \le q$.  \qed
\end{lem}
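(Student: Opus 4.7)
The plan is to bound $\mult_T ν^* L$ by organising the data into a bounded family, converting the lc-place hypothesis into a concrete identity for the multiplicity of interest, and then invoking Proposition~\ref{prop:lctbd} in dimension $\leq d-1$. Since $[A]^d \leq r$ and $A$ is very ample with $A-B$, $A-L$ and $A-\Lambda$ ample, and since $n·\Lambda$ is integral Cartier, standard Hilbert-scheme arguments show that the tuples $(X, A, B, \Lambda, L)$ satisfying Setting~\ref{set:6-5} together with the additional hypotheses of the lemma lie in a bounded family $u : (\sX, \sA, \mathbf{B}, \mathbf{\Lambda}, \mathbf{L}) → S$ over a quasi-projective base. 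After passing to a Noetherian stratification of $S$ and taking a simultaneous log resolution of $(\sX, \mathbf{\Lambda})$, one may track the valuation $T$ and its centre $x ∈ X$ uniformly in the family.

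Next, I would translate the lc-place hypothesis into a concrete identity. Writing $α := a_{\log}(T, X, 0)$, the fact that $T$ is an lc place of $(X, \Lambda)$ gives $\mult_T ν^* \Lambda = α$, so bounding $\mult_T ν^* L$ amounts to controlling the ratio in which $L$ (numerically subordinate to $A$ via the ampleness of $A-L$) sees the same valuation that is picked out by $\Lambda$. The boundedness of $\Lambda$ together with the bound on its Cartier index $n$ constrains which valuations over $x$ can serve as lc places of $(X, \Lambda)$: a valuation of very large $α$ would force $ν^* \Lambda$ to have correspondingly large multiplicity along $T$, contradicting the boundedness of $\Lambda$ in the family.

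Finally, I would reduce to Proposition~\ref{prop:lctbd} in dimension $d-1$. The naive strategy of slicing by a general $H ∈ |A|$ fails because $H$ misses the point $x$ entirely. Instead, I would pass to a normal model $\wtilde{ν} : \wtilde{Y} → X$ extracting only $T$ and apply adjunction along $T$. This produces a projective pair of dimension $d-1$ on $T$ whose boundary is determined by the data of $(X, \Lambda, L)$, and to which the inductive form of Proposition~\ref{prop:lctbd} applies. The resulting lower bound on the lc threshold of an appropriate $ℝ$-linear system of bounded degree on $T$ translates, via a direct discrepancy computation, into the desired upper bound on $\mult_T \wtilde{ν}^* L = \mult_T ν^* L$.

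The main obstacle is the adjunction step along $T$: the extracted model $\wtilde{Y}$ is in general only plt (not dlt) along $T$, so adjunction produces a pair on $T$ with an a priori irrational boundary whose coefficients, Cartier index, and intersection-theoretic ``ample degree'' must all be shown to satisfy the hypotheses of Proposition~\ref{prop:lctbd}. Controlling these invariants requires Hacon-McKernan-Xu's global ACC for log canonical thresholds together with the bounded-complements machinery developed in the earlier sections, and is tightly intertwined with the companion Lemma~\ref{lem:A6-6}: in practice, Lemmas~\ref{lem:A6-6} and~\ref{lem:A6-7} are established in a joint induction on dimension, with the complement constructed in the former supplying precisely the boundary data needed to carry out the adjunction in the latter.
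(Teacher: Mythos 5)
Your opening moves — organising the data $(X, A, B, \Lambda, L)$ into a bounded family and translating the lc-place condition into the identity $\mult_T \nu^*\Lambda = a_{\log}(T, X, 0)$ — match the start of Birkar's argument. But your main reduction, extracting $T$ to a model $\widetilde{Y}\to X$ and applying adjunction along $T$ to produce a $(d-1)$-dimensional pair to which Proposition~\ref{prop:lctbd} applies, is not the mechanism Birkar uses, and as written it has a genuine gap. You never explain which $\mathbb{R}$-linear system of bounded degree on $T$ carries the information $\mult_T\nu^*L$, why the adjunction boundary on the (in general merely plt) divisor $T$ would satisfy the $\varepsilon$-lc condition and the degree bound $[A_T]^{d-1}\le r'$ that Proposition~\ref{prop:lctbd} requires, or how a lower bound for an lc threshold \emph{on} $T$ turns into an upper bound for a coefficient of $\nu^*L$ in the ambient $d$-dimensional space. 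Asserting that this ``requires ACC and bounded complements'' and is ``intertwined with Lemma~\ref{lem:A6-6}'' labels the difficulty rather than resolving it.

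The idea you are missing is the one the paper itself flags as ``perhaps the core of Birkar's paper \cite{Bir16b}.'' After passing to a bounded family of log-smooth couples $(X, \supp\Lambda)$, the pair is toroidal, so $T$ is obtained by a sequence of toroidal blowups, and bounding $\mult_T\nu^*L$ reduces to bounding the \emph{number} of blowups. Birkar does this by proving a Noether-normalisation theorem that replaces the log-smooth $(X,\Lambda)$ by the genuinely toric pair $\bigl(\mathbb{P}^d,\ \text{union of hyperplanes}\bigr)$, then uses MMP surgery to replace the extracted model $Y$ by a toric, Fano, $\varepsilon$-lc variety, and finally invokes Borisov--Borisov boundedness of toric Fanos \cite{MR1166957} to bound the number of blowups. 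The inductive hypothesis on Proposition~\ref{prop:lctbd} is fed into that reduction, not into an adjunction along $T$. Nothing in your proposal touches this toric/toroidal core, and without it no bound is forthcoming.
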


\begin{com*}
  Lemma~\ref{lem:A6-7} is perhaps the core of Birkar's paper \cite{Bir16b}.  To
  begin, one needs to realise that the couples $\bigl( X, \supp(Λ) \bigr)$ that
  appear in Lemma~\ref{lem:A6-7} come from a bounded family.  This allows us to
  consider common resolution, and eventually to assume from the outset that
  $(X, Λ)$ is a log-smooth couple.  In particular, $(X, Λ)$ is toroidal, and $T$
  can be obtained by a sequence of blowing ups that are toroidal with respect to
  $(X , Λ)$.  Given that toroidal blow-ups are rather well understood, Birkar
  finds that to bound the multiplicity $\mult_T ν^*L$, it suffices to bound the
  number of blowups involved.

  Bounding the number of blowups is hard, and the next few sentences simplify a
  very complicated argument to the extreme\footnote{see \cite[p.~16f]{Bir18} and
    \cite[Sect.~10]{YXu18} for a more realistic account of all that is
    involved.}.  Birkar establishes a Noether-normalisation theorem, showing
  that he may replace the couple $(X, Λ)$, which is log-smooth, by a pair of the
  form $(ℙ^d, \text{union of hyperplanes})$, which is toric rather than
  toroidal.  Better still, applying surgery coming from the Minimal Model
  Programme, he is then able to replace $Y$ by a toric, Fano, $ε$-lc variety.
  But the family of such $Y$ is bounded by the classic result of
  Borisov-Borisov, \cite{MR1166957}, and a bound for the number of blowups
  follows.
\end{com*}

\begin{proof}[Sketch of proof for Proposition~\ref{prop:lctbd}]
  The proof of Proposition~\ref{prop:lctbd} proceeds by induction, so assume
  that $d$, $r$, and $ε$ are given and that everything was already shown in
  lower dimensions.  Now, given a $d$-dimensional pair $(X, B)$ and a very ample
  $A ∈ \Div(X)$ as in Proposition~\ref{prop:lctbd}, we aim to apply
  Lemma~\ref{lem:A6-6} and \ref{lem:A6-7}.  This is, however, not immediately
  possible because $X$ need not be $ℚ$-factorial.  We know from minimal model
  theory that there exists a small $ℚ$-factorialisation, say $X' → X$, but then
  we need to compare lc thresholds of $X'$ and $X$, and show that the difference
  is bounded.  To this end, recall from Remark~\ref{rem:6-4} that the family of
  all possible $X$ is bounded, which allows us to construct simultaneous
  $ℚ$-factorialisations in stratified families, and hence gives the desired
  bound for the differences.  Bottom line: we may assume that $X$ is
  $ℚ$-factorial.  Let $ε'$ be the number given by Lemma~\ref{lem:A6-6}.

  Next, given any divisor $L ∈ |A|_ℝ$, look at
  $$
  s := \sup \{ s' ∈ ℝ \,|\, (X, B+s'·L) \text{ is $ε'$-lc} \}.
  $$
  Following Remark~\ref{rem:sflct}, we would be done if we could bound $s$ from
  below, independently of $X$, $B$, $A$ and $L$.  To this end, choose a
  resolution of singularities, $ν : Y → X$ and a prime divisor $T ∈ \Div(Y)$
  such that $a_{\log}(T,X,B+s·L) = ε'$.  For simplicity, we will only consider
  the case where $ν(T)$ is a point, say $x ∈ X$ --- if $ν(T)$ is not a point,
  Birkar cuts down with general hyperplanes from $|A|$, uses inversion of
  adjunction and invokes the induction hypothesis in order to proceed.

  In summary, we are now in a situation where we may apply Lemma~\ref{lem:A6-6}
  (``Existence of complements'') to find a divisor $Λ$ and then
  Lemma~\ref{lem:A6-7} (``Bound on multiplicity at an lc place'') to bound the
  multiplicity $\mult_T ν^*L$ from above, independently of $X$, $B$, $A$ and
  $L$.  But then, a look at Definition~\ref{not:logdiscrep} (``log discrepancy'')
  shows that this already gives the desired bound on $s$.
\end{proof}

\subsection{Preparation for the proof of Theorem~\ref*{thm:lct}: varieties of Picard-number one}

The second main ingredient in the proof of Theorem~\ref{thm:lct} is the
following result, which essentially proves Theorem~\ref{thm:lct} in one special
case.  Its proof, which we do not cover in detail, combines all results
discussed in the previous Sections~\ref{sec:bcomp}--\ref{sec:volumes}:
boundedness of complements, effective birationality and bounds for volumes.

\begin{prop}[\protect{Theorem~\ref{thm:lct} in a special case, \cite[Prop~3.1]{Bir16b}}]\label{p-bnd-lct-global-weak}
  Given $d ∈ ℕ$ and $ε ∈ ℝ^+$, assume that Proposition~\ref{prop:lctbd}
  (``LC thresholds for $ℝ$-linear systems of bounded degrees'') holds in
  dimension $≤ d$ and that Theorem~\ref{thm:BAB} (``Boundedness of $ε$-lc
  Fanos'') holds in dimension $≤ d-1$.  Then, there exists $v ∈ ℝ^+$ such
  that the following holds.  If $X$ is any $ℚ$-factorial, $ε$-lc Fano variety of
  dimension $d$ of Picard number one, and if $L ∈ ℝ\Div(X)$ is effective
  with $L \sim_{ℝ} -K_X$, then each coefficient of $L$ is less than or equal to
  $v$.  \qed
\end{prop}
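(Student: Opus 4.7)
The plan is to argue by contradiction: assume there exist a sequence of $ε$-lc, $ℚ$-factorial Fano varieties $X_i$ of dimension $d$ with $ρ(X_i) = 1$ and effective divisors $L_i \sim_{ℝ} -K_{X_i}$, each having a component $D_i$ whose coefficient $μ_i := \mult_{D_i} L_i$ satisfies $μ_i \to \infty$. The strategy is to combine the three pillars recalled in Sections~\ref{sec:bcomp}--\ref{sec:volumes} to place a bounded multiple of every $L_i$ inside an $ℝ$-linear system $|A_i|_{ℝ}$ of \emph{uniformly bounded} top self-intersection, and then to contradict the uniform lower bound on the lc threshold provided by Proposition~\ref{prop:lctbd} in dimension $d$.

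Concretely, the hypothesis that $\mathcal{X}_{d-1,ε}$ is bounded together with Theorem~\ref{thm:bvol} yields a uniform upper bound $\vol(-K_{X_i}) \leq v_0$. Theorem~\ref{thm:effBir} supplies an integer $m \in ℕ$, depending only on $d$ and $ε$, for which $|-m K_{X_i}|$ defines a birational map. Theorem~\ref{thm:boundOfCompl}, applied to the Fano $X_i$ (which is of Fano type, with $-K_{X_i}$ nef and boundary $0$), supplies an integer $n \in ℕ$ and, for each $i$, an $n$-complement $B^+_i$ of $K_{X_i}$: the pair $(X_i,B^+_i)$ is lc, $nB^+_i$ is integral, and $n(K_{X_i}+B^+_i) \sim 0$. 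Combining these three ingredients with the Picard number one hypothesis, the plan is to extract a uniform integer $N \in ℕ$ such that $A_i := -N K_{X_i}$ is Cartier, very ample, and satisfies $[A_i]^d = N^d \vol(-K_{X_i}) \leq N^d v_0$ for every $i$.

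Once $A_i$ is in place, since $L_i \sim_{ℝ} -K_{X_i}$ the divisor $N L_i$ is effective and lies in $|A_i|_{ℝ}$. Proposition~\ref{prop:lctbd}, applied in dimension $d$ with $B = 0$ and with the universal upper bound $r := N^d v_0$ on $[A_i]^d$, yields a universal constant $t \in ℝ^+$ such that $\lct\bigl(X_i,\, |A_i|_{ℝ}\bigr) \geq t$ for every $i$. By Remark~\ref{rem:sflct}, the pair $\bigl(X_i,\, \tfrac{t}{2}\, N L_i\bigr)$ is then lc, so the coefficient of $D_i$ in $\tfrac{t}{2}\, N L_i$, namely $\tfrac{tN}{2}\, μ_i$, is at most $1$. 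This forces $μ_i \leq 2/(tN)$ uniformly in $i$, contradicting $μ_i \to \infty$ and proving the proposition with $v := 2/(tN)$.

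The main obstacle is the construction, in the second paragraph, of the very ample divisor $A_i$ with uniformly bounded top self-intersection. Effective birationality only delivers birationality of $|-mK_{X_i}|$, not base-point-freeness or very ampleness, and the $n$-complement only realises $-nK_{X_i}$ as an integral Weil divisor without a priori controlling its Cartier index. I expect that the Picard number one hypothesis, together with the structural properties of complements for generalised polarised pairs and in the relative setting mentioned in Section~\ref{sec:vandg}, will be precisely what upgrades ``birational'' and ``integral Weil'' to ``very ample of uniformly bounded Cartier index''. All the remaining steps are essentially formal manipulations with $ℝ$-linear equivalence and the definition of the log canonical threshold.
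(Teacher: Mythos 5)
The gap you flag at the end is genuine, and it is not one that the Picard--number--one hypothesis or the machinery of generalised and relative complements can fill: the plan is in fact circular. To apply Proposition~\ref{prop:lctbd} to $X_i$ in dimension $d$ you need a very ample $A_i \in \Div(X_i)$ with $[A_i]^d$ uniformly bounded, and by Remark~\ref{rem:6-4} the existence of such an $A_i$ is equivalent to the $X_i$ lying in a bounded family. But Proposition~\ref{p-bnd-lct-global-weak} only assumes Theorem~\ref{thm:BAB} in dimension $\leq d-1$; boundedness in dimension $d$ is what the chain Proposition~\ref{p-bnd-lct-global-weak} $\Rightarrow$ Lemma~\ref{l-local-lct-bab-to-global-lct} $\Rightarrow$ Theorem~\ref{thm:lct} $\Rightarrow$ Corollary~\ref{cor:bab} eventually produces as a \emph{consequence}. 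Concretely, what your second paragraph would need is a uniform bound on the Cartier index of $K_{X_i}$. Effective birationality only says $|-mK_{X_i}|$ defines a birational map, and an $n$-complement only realises $-nK_{X_i}$ up to linear equivalence as an integral Weil divisor --- neither controls the Cartier index, and on unbounded $\varepsilon$-lc Fanos with $\rho = 1$ the Cartier index can a priori be arbitrarily large. Bounding it is essentially the statement of boundedness of $\varepsilon$-lc Fanos of fixed Cartier index recalled in Section~\ref{ssec:1-1-2v2} together with the claim that the index is uniform, i.e.\ exactly what one is working towards, not what one has. Note also that if your construction of $A_i$ did succeed, the conclusion would then be immediate from boundedness, and the detour through Proposition~\ref{prop:lctbd} would be superfluous --- a further sign that step is where the whole difficulty lives.

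For this reason the paper's route to Proposition~\ref{p-bnd-lct-global-weak} cannot proceed by applying Proposition~\ref{prop:lctbd} with a constructed very ample divisor on $X$ itself. As the survey notes, Birkar's argument combines the three ingredients in a more indirect way: effective birationality is used to produce a birational map from $|-mK_X|$ and pass to a log-birationally bounded model as in Proposition~\ref{thm:lbb}; the complement and the volume bound are used to create and control non-klt centres; and Proposition~\ref{prop:lctbd} together with the induction hypothesis is applied to the adjoint data on those lower-dimensional pieces, where boundedness is available, rather than to $X$ in dimension $d$ directly.
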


\subsection{Sketch of proof of Theorem~\ref*{thm:lct}}
\label{ssec:soft}

Like other statements, Theorem~\ref{thm:lct} is shown using induction over the
dimension.  The following key lemma provides the induction step.

\begin{lem}[\protect{Implication Proposition~\ref{prop:lctbd} $⇒$ Theorem~\ref{thm:lct}, \cite[Lem.~3.2]{Bir16b}}]\label{l-local-lct-bab-to-global-lct}
  Given $d ∈ ℕ$, assume that Proposition~\ref{prop:lctbd} (``LC thresholds
  for $ℝ$-linear systems of bounded degrees'') holds in dimension $≤ d$ and that
  Theorem~\ref{thm:BAB} (``Boundedness of $ε$-lc Fanos'') holds in dimension
  $≤ d-1$.  Then, Theorem~\ref{thm:lct} (``Lower bound for lc thresholds'')
  holds in dimension $d$.
\end{lem}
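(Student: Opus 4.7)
My approach is to argue by contradiction: assume the conclusion fails in dimension $d$, so that there is a sequence of projective $\varepsilon$-lc pairs $(X_i, B_i)$ of dimension $d$ with $\Delta_i := -(K_{X_i} + B_i)$ nef and big, and divisors $D_i \in |\Delta_i|_{\mathbb{R}}$ whose lc thresholds $s_i := \lct(X_i, B_i, D_i)$ tend to $0$. Proposition~\ref{prop:lctbd} only delivers bounds for $\mathbb{R}$-linear systems of bounded degree, so the essential task is to reduce from the arbitrary nef and big divisor $\Delta_i$ to a setting where either Proposition~\ref{prop:lctbd} applies in dimension $\leq d$ or the BAB hypothesis applies in dimension $\leq d-1$.

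First I would carry out the standard MMP reductions. Writing $\Delta_i$ as ample plus effective and absorbing a small multiple of the effective part into $B_i$, I may assume (at the cost of replacing $\varepsilon$ by a slightly smaller $\varepsilon' < \varepsilon$) that $-(K_{X_i}+B_i)$ is ample. After a small $\mathbb{Q}$-factorialisation, $X_i$ is $\mathbb{Q}$-factorial of Fano type, hence a Mori dream space by Remark~\ref{rem:MoriDream}, so I can run an MMP to reach a Mori fibre space $Y_i \to Z_i$ with relative Picard number one; the lc threshold of the push-forward is preserved, so it suffices to contradict the output. If $\dim Z_i \geq 1$, a general fibre $F_i$ is an $\varepsilon'$-lc Fano of dimension at most $d-1$, which by the assumed Theorem~\ref{thm:BAB} in dimension $\leq d-1$ belongs to a bounded family. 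In particular, a uniformly very ample, uniformly bounded-degree divisor $A_{F_i}$ is available on $F_i$. Restricting $B_{Y_i}$ and $D_{Y_i}$ to $F_i$ via adjunction and comparing $D_{F_i} \in |-K_{F_i} - B_{F_i}|_{\mathbb{R}}$ to a fixed multiple of $A_{F_i}$, Proposition~\ref{prop:lctbd} in dimension $\dim F_i \leq d-1$ yields a uniform lower bound for $\lct(F_i, B_{F_i}, D_{F_i})$, and inversion of adjunction transports this back to $Y_i$, contradicting $s_i \to 0$.

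If instead $Z_i$ is a point, then $Y_i$ is a $\mathbb{Q}$-factorial $\varepsilon'$-lc Fano of Picard number one, and Proposition~\ref{p-bnd-lct-global-weak} applies to yield a uniform bound $v$ on every coefficient of $D_{Y_i}$. Let $T_i$ be a prime divisor over $Y_i$ realising the drop of the log discrepancy below $\varepsilon'$. If the centre of $T_i$ is a divisor on $Y_i$, then $\mult_{T_i}(D_{Y_i}) \leq v$ gives directly
\[
  a_{\log}(T_i, Y_i, B_{Y_i} + s_i D_{Y_i}) \;=\; a_{\log}(T_i, Y_i, B_{Y_i}) - s_i \cdot \mult_{T_i}(D_{Y_i}) \;\geq\; \varepsilon' - s_i v,
\]
which stays bounded away from zero for $s_i$ small enough, a contradiction.

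The case in which the centre of $T_i$ has codimension at least two on $Y_i$ is, I expect, the main obstacle: the coefficient bound from Proposition~\ref{p-bnd-lct-global-weak} no longer controls $\mult_{T_i}(\nu_i^* D_{Y_i})$ directly, and no bounded-degree very ample divisor is a priori available on $Y_i$ itself. My plan is to cut down by a general member of $|-mK_{Y_i}|$ passing through the centre (or, where sections are not available, by a local complement constructed via Lemma~\ref{lem:A6-6}) so as to reduce the dimension of the centre; iterating, one either forces the centre to become divisorial on the resulting slice and falls into the previous sub-case, or produces a bounded-degree very ample witness to which Proposition~\ref{prop:lctbd} applies in dimension $\leq d$, finishing the argument via inversion of adjunction. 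Keeping this slicing procedure compatible with both the $\varepsilon'$-lc condition and the bounded-degree hypothesis of Proposition~\ref{prop:lctbd} — and in particular producing the bounded very ample witness on the slice when $Y_i$ is not known to be bounded — is the delicate point where I expect Birkar's complement machinery to do the essential work.
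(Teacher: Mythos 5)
Your overall strategy — reduce to a Mori fibre space, handle a positive–dimensional base via the inductive hypotheses in lower dimension, and handle the Picard–number–one case via Proposition~\ref{p-bnd-lct-global-weak} — matches the paper's, and your arithmetic in the divisorial sub-case is essentially the same multiplicity computation Birkar uses. But the obstacle you flag at the end, the case where the centre of $T_i$ has codimension at least two, is exactly where the paper's proof has a simple device that your proposal is missing, and without it the argument does not close. Birkar first invokes the \cite[Cor.~1.4.3]{BCHM10} extraction technique to replace $X$ by a birational model $Y$ on which $T$ \emph{is} a prime divisor (with $\phi$-exceptional set equal to $T$), sets $B_Y := \phi^*(K_X+B)-K_Y$, $L_Y := \phi^*L$, records the multiplicity inequality $\mult_T(sL_Y)\geq\varepsilon-\varepsilon'$, and then runs a $(-T)$-MMP rather than an anticanonical MMP. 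Because the MMP is directed by $-T$ and $T$ is effective, it cannot terminate with a minimal model, and on the resulting Mori fibre space $Y'\to Z'$ the image $T'$ of $T$ is a divisor that is relatively ample, hence dominates $Z'$. Thus the "codim $\geq 2$ centre" situation never arises: one always lands in your easy divisorial sub-case, with $\mult_{T'}(1-s)L_{Y'}\geq\frac{(1-s)(\varepsilon-\varepsilon')}{s}$ on one side and, on the other, either Proposition~\ref{prop:lctbd} in lower dimension applied to a general fibre (if $\dim Z'>0$) or Proposition~\ref{p-bnd-lct-global-weak} (if $Z'$ is a point) bounding $\mult_{T'}(1-s)L_{Y'}$ from above. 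The slicing and complement machinery you invoke at the end is the strategy for proving Proposition~\ref{prop:lctbd} itself, not for this implication; here the whole point is that the $(-T)$-MMP lets you avoid it.

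A couple of smaller remarks: you run the MMP so that the \emph{anticanonical} is relatively ample, but then the relevant place $T_i$ need not be tracked through the MMP steps in a usable way; running the MMP directed by $-T$ instead is what keeps $T$ in play as a dominating divisor. Also, your reduction "replacing $\varepsilon$ by $\varepsilon'<\varepsilon$ and making $-(K+B)$ ample" is harmless but unnecessary in the paper's version; what is needed is only that $(Y,B_Y+sL_Y)$ is klt weak log Fano so that $Y$ is of Fano type and a Mori dream space. Finally, in the $\dim Z'>0$ case the paper applies Proposition~\ref{prop:lctbd} in lower dimension to the general fibre rather than passing through Theorem~\ref{thm:BAB} first, though (as the paper's footnote notes) either route works.
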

\begin{proof}[\protect{Sketch of proof following \cite[p.~13f]{Bir16b}}]
  The first steps in the proof are similar to the proof of
  Proposition~\ref{prop:lctbd}.  Choose any number $ε'∈ (0,ε)$.  Given any
  projective, $d$-dimension, $ε$-lc pair $(X,B)$ be as in Theorem~\ref{thm:lct}
  in dimension $d$ and any divisor $L ∈ |Δ|_{ℝ}$, let $s$ be the largest number
  such that $(X,B+s·L)$ is $ε'$-lc.  We need to show $s$ is bounded from below
  away from zero.  In particular, we may assume that $s<1$.  As in the proof of
  Proposition~\ref{prop:lctbd}, we may also assume $X$ is $ℚ$-factorial.  There
  is a birational modification $φ : Y → X$ and a prime divisor $T ∈ \Div(Y)$
  with log discrepancy
  \begin{equation}\label{eq:ghjfg}
    a_{\log}(T,X,B+s·L)=ε'.
  \end{equation}
  Techniques of \cite{BCHM10} (``extracting a divisor'') allow us to assume that
  $φ$ is either the identity, or that the $φ$-exceptional set equals $T$
  precisely.  The assumption that $X$ is $ℚ$-factorial allows us to pull back
  divisors.  Let
  $$
  B_Y := φ^*(K_X+B)-K_Y \quad\text{and}\quad L_Y := φ^*L.
  $$
  Using the definition of log discrepancy, Definition~\vref{not:logdiscrep}, the
  assumption that $(X,B)$ is $ε$-lc and Equation~\eqref{eq:ghjfg} are formulated
  in terms of divisor multiplicities as
  $$
  \mult_T B_Y ≤ 1-ε \quad\text{and}\quad \mult_T(B_Y+s·L_Y) = 1-ε',
  $$
  hence $\mult_T (s·L_Y) ≥ ε-ε'$.
  
  The pair $(Y, B_Y+ s·L_Y)$ is klt and weak log Fano, which implies that $Y$ is
  Fano type.  Recalling from Remark~\vref{rem:MoriDream} that $Y$ is thus a Mori
  dream space, we may run a $(-T)$-Minimal Model Programme and obtain rational
  maps,
  $$
  \xymatrix{ %
    Y \ar@{-->}[rrrr]^{α\text{, extr.\ contractions and flips}} &&&& Y' \ar[rrrr]^{β\text{, Mori fibre space}} &&&& Z',
  }
  $$
  where $-T$ is ample when restricted to general fibres of $β$.  We write
  $B_{Y'} := α_* B_Y$ and $L_{Y'} := α_* L_Y$ and note that
  $$
  -(K_{Y'}+B_{Y'}+s·L_{Y'}) \overset{\text{def.\ of $L$}}{\sim_ℝ} (1-s)L_{Y'} \overset{s < 1}{≥} 0.
  $$
  Moreover, an explicit discrepancy computation along the lines of
  \cite[Cor.~2.32]{KM98} shows that $(Y', B_{Y'}+s·L_{Y'})$ is $ε'$-lc, because
  $(Y,B_Y+s·L_Y)$ is $ε'$-lc and because $-(K_Y+B_Y+s·L_Y)$ is semiample.  There
  are two cases now.

  If $\dim Z' > 0$, then restricting to a general fibre of $Y' → Z'$ and
  applying Proposition~\ref{prop:lctbd} (``LC thresholds for $ℝ$-linear systems
  of bounded degrees'') in lower dimension\footnote{or applying
    Theorem~\ref{thm:BAB} (``Boundedness of $ε$-lc Fanos'')} shows that the
  coefficients of those components of $(1-s)·L_{Y'}$ that dominate $Z'$
  components of are bounded from above.  In particular, $\mult_{T'}(1-s)·L_{Y'}$
  is bounded from above.  Thus from the inequality
  $$
  \mult_{T'}(1-s)·L_{Y'} ≥ \frac{(1-s)·(ε-ε')}{s},
  $$
  we deduce that $s$ is bounded from below away from zero.

  If $Z'$ is a point, then $Y'$ is a Fano variety with Picard number one.  Now
  $$
  -K_{Y'} \sim_ℝ (1-s)·L_{Y'} + B_{Y'} + s·L_{Y'}≥ (1-s)·L_{Y'},
  $$
  so by Proposition~\ref{p-bnd-lct-global-weak}, $\mult_{T'}(1-s)·L_{Y'}$ is bounded
  from above which again gives a lower bound for $s$ as before.
\end{proof}

%
%
\svnid{$Id: 08-jordan.tex 84 2019-04-08 23:01:11Z kebekus $}

\section{Application to the Jordan property}
\label{sec:jordan}
\subversionInfo

We explain in this section how the boundedness result for Fano varieties applies
to the study of birational automorphism groups, and how it can be used to prove
the Jordan property.  Several of the core ideas presented here go back to work
of Serre, who solved the two dimensional case, \cite[Thm.~5.3]{MR2567402} but
see also \cite[Thm.~3.1]{MR2648675}.  If one is only interested in the
three-dimensional case, where birational geometry is particularly
well-understood, most arguments presented here can be simplified.

\subsection{Existence of subgroups with fixed points}

If $X$ is any rationally connected variety, Theorem~\ref{thm:jordan} (``Jordan
property of Cremona groups'') asks for the existence of finite Abelian groups in
the Cremona groups $\Bir(X)$.  As we will see in the proof, this is almost
equivalent to asking for finite groups of automorphisms that admit fixed points,
and boundedness of Fanos is the key tool used to find such groups.  The
following lemma is the simplest result in this direction.  Here, boundedness
enters in a particularly transparent way.

\begin{lem}[\protect{Fixed points on Fano varieties, \cite[Lem.~4.6]{MR3483470}}]\label{lem:j1}
  Given $d ∈ ℕ$, there exists a number $j^{\Fano}_d ∈ ℕ$ such that for any
  $d$-dimensional Fano variety $X$ with canonical singularities and any finite
  subgroup $G ⊆ \Aut(X)$, there exists a subgroup $F ⊆ G$ of index
  $|G:F| ≤ j^{\Fano}_d$ acting on $X$ with a fixed point.
\end{lem}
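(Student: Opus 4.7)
The plan is to reduce the question to linear algebra by leveraging boundedness. Canonical singularities are the special case of $\varepsilon$-lc singularities with $\varepsilon = 1$ and $B = 0$, so Theorem~\ref{thm:BAB} applies and the family of $d$-dimensional canonical Fanos is bounded. Standard consequences of boundedness then yield universal integers $m = m(d)$ and $N = N(d)$ such that, for every $X$ in the family, the divisor $-mK_X$ is very ample, $h^0(X, -mK_X) \le N$, and the resulting embedding $X \hookrightarrow \mathbb{P}\bigl(H^0(X, -mK_X)^{\vee}\bigr) \subseteq \mathbb{P}^{N-1}$ is intrinsic, hence automatically equivariant with respect to $\Aut(X)$.

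Any finite $G \subseteq \Aut(X)$ then acts projectively on $\mathbb{P}^{N-1}$. A standard central-extension argument gives a finite group $\widetilde{G}$ of order at most $m \cdot |G|$, fitting into a sequence $1 \to \mu_m \to \widetilde{G} \to G \to 1$, together with a linear embedding $\widetilde{G} \hookrightarrow \GL(H^0(X, -mK_X)) \subseteq \GL_N(\mathbb{C})$, such that the fixed locus of $\widetilde{G}$ on $X$ coincides with that of $G$. Applying Jordan's theorem (Theorem~\ref{thm:jordan-lin}) in dimension $N$ produces a normal Abelian subgroup $\widetilde{A} \trianglelefteq \widetilde{G}$ of index at most $j^{\Jordan}_{N}$. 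Its image $F \subseteq G$ under $\widetilde{G} \to G$ is Abelian and of index in $G$ bounded by $j^{\Jordan}_{N}$, a quantity depending only on $d$.

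It remains to produce an $F$-fixed point on $X$, and this is the main obstacle. Since $\widetilde{A}$ is a finite Abelian subgroup of $\GL_N(\mathbb{C})$, it is simultaneously diagonalizable, and $(\mathbb{P}^{N-1})^{\widetilde{A}}$ is the non-empty union of the projectivized character eigenspaces; the question becomes whether $X$ meets this fixed locus. Here one invokes that canonical Fanos are rationally connected (Zhang, Hacon--McKernan), which forces $h^i(X, \mathcal{O}_X) = 0$ for $i > 0$. The holomorphic Lefschetz fixed-point formula then gives $\chi^g(X, \mathcal{O}_X) = 1$, and therefore $X^g \ne \emptyset$, for every $g \in F$. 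Upgrading this element-wise statement to a common fixed point of the whole Abelian group $F$ is the real difficulty; the standard remedy is to replace $F$ by a Sylow $p$-subgroup for a suitably chosen prime $p$ and invoke the fact that a finite $p$-group acting on a rationally connected variety admits a fixed point, at the cost of a further factor depending only on $N$ in the index. This yields the desired universal bound $j^{\Fano}_d$.
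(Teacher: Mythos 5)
Your first two steps (boundedness via Theorem~\ref{thm:BAB} giving a universal $m$ and a universal $\Aut(X)$-equivariant embedding into a fixed $\mathbb{P}^N$, the central extension $\widetilde{G}\to G$ inside $\GL_{N+1}(\mathbb{C})$, and Jordan's Theorem~\ref{thm:jordan-lin} producing an Abelian subgroup of bounded index) coincide with the paper's argument. The gap is in your final step, and it is genuine. First, the ``fact'' you invoke --- that a finite $p$-group acting on a rationally connected variety admits a fixed point --- is not a standard citable statement; it is false as stated (the Heisenberg group of order $p^3$ acts on $\mathbb{P}^{p-1}$ through an irreducible projective representation and has no fixed point, and $\mathbb{P}^{p-1}$ is rationally connected), and the correct statements in this direction, for $p$ large compared to the dimension, are theorems of Prokhorov--Shramov whose proofs rest on exactly the boundedness machinery this lemma is meant to feed into, so invoking them here is circular. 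Second, even granting such a fixed-point statement, passing from $F$ to a Sylow $p$-subgroup does not cost ``a further factor depending only on $N$'': the index of a Sylow $p$-subgroup in $F$ is $|F|$ divided by the $p$-part of $|F|$, which is unbounded (e.g.\ $F$ cyclic of large order prime to $p$). Your holomorphic Lefschetz step also needs an equivariant resolution to be applied legitimately, since $X$ is only canonical, but that is repairable; the element-wise-to-group upgrade is the real hole, as you yourself note.

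The missing idea is that the lemma does not ask for an Abelian subgroup fixing a point, only for a subgroup of bounded index with a fixed point, and boundedness supplies a second numerical invariant you never use: a universal bound $v$ on the degree $(-m\,K_X)^d$ in the embedding. The paper's proof exploits this as follows. Since the Abelian group $\Phi\subseteq\Gamma$ (your $\widetilde{A}$) is simultaneously diagonalizable, one gets $N+1$ independent $\Phi$-invariant hyperplanes; intersecting $X\subseteq\mathbb{P}^N$ with a suitable $\Phi$-invariant subcollection of them cuts out a finite $\Phi$-invariant set $\{x_1,\dots,x_r\}\subseteq X$ with $r\le v$. One then takes the stabiliser $\Phi_{x_1}$, which has index at most $v$ in $\Phi$ because the $\Phi$-orbit of $x_1$ lies in this finite set, and lets $F$ be its image in $G$; this $F$ fixes $x_1$ and has index at most $j^{\Jordan}_{N+1}\cdot v$ in $G$. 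So there is no need to show that $X$ meets the common fixed locus of the Abelian group at all --- the degree bound converts ``invariant finite set'' into ``fixed point for a bounded-index subgroup''. To repair your write-up, replace everything after the Jordan step with this linear-algebra-plus-degree argument.
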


\begin{rem}
  To keep notation simple, Lemma~\ref{lem:j1} is formulated for Fanos with
  canonical singularities, which is the relevant case for our application.  In
  fact, it suffices to consider Fanos that are $ε$-lc.
\end{rem}

\begin{proof}[Proof of Lemma~\ref{lem:j1}]
  As before, write $\mathcal{X}^{\Fano}_{d,0}$ for the $d$-dimensional Fano
  variety $X$ with canonical singularities.  It follows from boundedness,
  Theorem~\ref{thm:BAB} or Corollary~\ref{cor:bab}, that there exist numbers
  $m$, $v ∈ ℕ$ such that the following holds for every
  $X ∈ \mathcal{X}^{\Fano}_{d,0}$.
  \begin{enumerate}
  \item The divisor $-m·K_X$ is Cartier and very ample.
  \item The self-intersection number of $-m·K_X$ is bounded by $v$.  More
    precisely,
    $$
    -[m·K_X]^{d} ≤ v.
    $$
  \end{enumerate}
  Given $X$, observe that the associated line bundles $𝒪_X(-m·K_X)$ are
  $\Aut(X)$-linearised.  Accordingly, there exists a number $N ∈ ℕ$, such that
  every $X ∈ \mathcal{X}^{\Fano}_{d,0}$ admits an $\Aut(X)$-equivariant
  embedding $X ↪ ℙ^N$.  Let $j^{\Jordan}_{N+1}$ be the number obtained by
  applying the classical result of Jordan, Theorem~\ref{thm:jordan-lin}, to
  $\GL_{N+1}(ℂ)$, and set $j^{\Fano}_d := j^{\Jordan}_{N+1}·v$.

  Now, given any $X ∈ \mathcal{X}^{\Fano}_{d,0}$ and any finite subgroup
  $G ⊆ \Aut(X)$, the $G$ action extends to $ℙ^N$.  The action is thus induced by
  a representation of a finite linear group $Γ$, say
  $$
  \xymatrix{ %
    Γ \ar@{^(->}[r] \ar@{->>}[d] & \GL_{N+1}(ℂ) \ar@{->>}[d] \\
    G \ar@{^(->}[r] & \PGL_{N+1}(ℂ).  \\
  }
  $$
  By Theorem~\ref{thm:jordan-lin}, the classic result of Jordan, we find a
  finite Abelian subgroup $Φ ⊆ Γ$ of index $|Φ:Γ| ≤ j^{\Jordan}_{N+1}$.  Since
  $Φ$ is Abelian, the $Φ$-representation space $ℂ^{N+1}$ is a direct sum of
  one-dimensional representations.  Equivalently, we find $N+1$ linearly
  independent, $Φ$-invariant, linear hyperplanes $H_i ⊂ ℙ^{N+1}$.  The
  intersection of suitably chosen $H_i$ with $X$ is then a finite, $Φ$-invariant
  subset $\{x_1, …, x_r\} ⊂ X$, of cardinality $r ≤ v$.  The stabiliser of
  $x_1 ∈ X$ is a subgroup $Φ_{x_1} ⊂ Φ$ of index $|Φ:Φ_{x_1}| ≤ v$.  Taking $F$
  as the image of $Φ_{x_1} → G$, we obtain the claim.
\end{proof}

\begin{rem}
  The proof of Lemma~\ref{lem:j1} shows that the groups $G$ are close to
  Abelian.  It also gives an estimate for $j^{\Fano}_d$ in terms of the volume
  bound (``$v$'') and the classical Jordan constant $j^{\Fano}_d$.
\end{rem}

As a next step, we aim to generalise the results of Lemma~\ref{lem:j1} to
varieties that are rationally connected, but not necessarily Fano.  The
following result makes this possible.

\begin{lem}[\protect{Rationally connected subvarieties on different models, \cite[Lem.~3.9]{MR3483470}}]\label{lem:xfgs}
  Let $X$ be a projective variety with an action of a finite group $G$.  Suppose
  that $X$ is klt, with $Gℚ$-factorial singularities and let
  $f : X \dasharrow Y$ be a birational map obtained by running a $G$-Minimal
  Model Programs.  Suppose that there exists a subgroup $F ⊂ G$ and an
  $F$-invariant, rationally connected subvariety $T ⊊ Y$.  Then, there exists an
  $F$-invariant rationally connected subvariety $Z ⊊ X$.  \qed
\end{lem}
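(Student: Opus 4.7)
The plan is to induct on the number of elementary steps of the $G$-MMP factorising $f$, so that it suffices to treat a single step: either a $G$-equivariant divisorial contraction $X \to Y$ or a $G$-equivariant flip $X \dasharrow Y$. In either case the exceptional (resp.\ indeterminacy) locus is automatically $G$-invariant, and in particular $F$-invariant.

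For a single step, I would invoke equivariant resolution of singularities for the finite group $G$ to produce a $G$-equivariant common resolution $W$ fitting into $G$-equivariant birational morphisms $g \colon W \to X$ and $h \colon W \to Y$. The strategy is then to find an $F$-invariant irreducible subvariety $V \subset h^{-1}(T)$ dominating $T$, and set $Z := g(V) \subset X$. In the generic case, where $T$ is not contained in the non-isomorphism locus of $h$, the natural choice is the strict transform of $T$ in $W$: it is the unique irreducible component of $h^{-1}(T)$ mapping birationally onto $T$, hence automatically $F$-invariant (by uniqueness) and rationally connected (being birational to the RC variety $T$). Then $Z := g(V)$ is irreducible, $F$-invariant by equivariance of $g$, and rationally connected as the image of an RC variety under a regular morphism; the estimate $\dim Z \le \dim V = \dim T < \dim X$ forces $Z \subsetneq X$.

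The principal obstacle is the degenerate case, in which $T$ lies inside the non-isomorphism locus of $h$. Here $h^{-1}(T)$ can have several irreducible components dominating $T$, permuted non-trivially by $F$, so no canonical strict transform exists. The key input for this case is the theorem of Hacon and McKernan on rational chain connectedness of fibres of birational morphisms from klt varieties, which combined with the RC of $T$ guarantees that every dominating component of $h^{-1}(T)$ is itself rationally chain connected. To extract a single $F$-invariant \emph{irreducible} representative, one would exploit the $G\bQ$-factoriality and klt hypotheses: in a single elementary MMP step the exceptional (resp.\ flipped) locus on $X$ is a controlled $F$-invariant subvariety, so distinct $F$-translates of a chosen dominating component in $W$ can be shown to map under $g$ onto the same irreducible subvariety of $X$, producing the desired $F$-invariant irreducible $Z \subsetneq X$. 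Combining both cases and iterating over the MMP factorisation completes the argument.
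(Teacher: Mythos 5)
Your reduction to a single $G$-equivariant MMP step, and the handling of the generic case via the uniquely determined strict transform, is the natural and correct opening move. The survey itself does not reproduce the proof of Lemma~\ref{lem:xfgs} --- it is quoted from \cite{MR3483470} --- but it does list three essential ingredients: Hacon--McKernan's rational chain connectedness of fibres of MMP contractions, Graber--Harris--Starr, and log-canonical centre techniques via a relative version of Kawamata's subadjunction formula. Your sketch engages with the first two and omits the third, and that omission is exactly where your treatment of the degenerate case breaks down.

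The gap is the unjustified claim that ``distinct $F$-translates of a chosen dominating component in $W$ can be shown to map under $g$ onto the same irreducible subvariety of $X$.'' Nothing in $G\bQ$-factoriality forces this. A single elementary step of a $G$-MMP may contract an entire $G$-orbit of prime divisors; the dominating components of $\phi^{-1}(T)$ can then be several distinct irreducible subvarieties of $X$ that $F$ permutes nontrivially, so that no dominating component is $F$-invariant and your construction produces no candidate $Z$ at all. There is also a softer issue: Hacon--McKernan gives only rational \emph{chain} connectedness of such a component, while the lemma asks for a rationally connected $Z$, and upgrading the former to the latter requires control of the singularities of the component that your sketch does not supply. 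Prokhorov--Shramov sidestep both problems at once by producing a \emph{canonical} irreducible subvariety rather than choosing a component: one takes a suitable minimal log-canonical centre of an auxiliary pair $(X,\Delta)$ lying over $T$. Being canonically determined, this centre is automatically $F$-invariant, and the relative Kawamata subadjunction formula (combined with Graber--Harris--Starr and the rational connectedness of $T$) exhibits it as rationally connected, indeed of Fano type. This lc-centre mechanism is the missing piece in your proposal.
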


Since we are mainly interested to see how boundedness applies to birational
transformation groups, we will not explain the proof of Lemma~\ref{lem:xfgs} in
detail.  Instead, we merely list a few of the core ingredients, which all come
from minimal model theory and birational geometry.
\begin{itemize}
\item Hacon-McKernan's solution \cite{HMcK07} to Shokurov's ``rational
  connectedness conjecture'', which guarantees in essence that the fibres of all
  morphisms appearing in the MMP are rationally chain connected.

\item A fundamental result of Graber-Harris-Starr, \cite{GHS03}, which implies
  that if $f : X → Y$ is any dominant morphism of proper varieties, where both
  the target $Y$ and a general fibre is rationally connected, then $X$ is also
  rationally connected.
  
\item Log-canonical centre techniques, in particular a relative version of
  Kawamata's subadjunction formula, \cite[Lem.~2.5]{MR3483470}.  These results
  identify general fibres of minimal log-canonical centres under contraction
  morphisms as rationally connected varieties of Fano type.
\end{itemize}

\begin{prop}[\protect{Fixed points on rationally connected varieties, \cite[Lem.~4.7]{MR3483470}}]\label{prop:j2}
  Given $d ∈ ℕ$, there exists a number $j^{rc}_d ∈ ℕ$ such that for any
  $d$-dimensional, rationally connected projective variety $X$ and any finite
  subgroup $G ⊆ \Aut(X)$, there exists a subgroup $F ⊆ G$ of index
  $|G:F| ≤ j^{rc}_d$ acting on $X$ with a fixed point.
\end{prop}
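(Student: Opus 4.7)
The plan is to proceed by induction on $d = \dim X$, with trivial base case $d = 0$. For the inductive step, assume the result in dimensions $< d$ and let $X$ together with $G \subseteq \Aut(X)$ be given. After passing to a $G$-equivariant resolution of singularities followed by a $G$-equivariant $\mathbb{Q}$-factorialisation, we may run a $G$-equivariant Minimal Model Programme; since $X$ is rationally connected, $K_X$ is not pseudo-effective, so this terminates in a $G$-equivariant Mori fibre space $f : Y \to Z$ with $\dim Z < d$. The overall strategy is to produce, in each case, a subgroup $F \subseteq G$ of bounded index together with an $F$-invariant rationally connected proper subvariety $T \subsetneq Y$. Lemma~\ref{lem:xfgs} will then supply an $F$-invariant rationally connected proper subvariety $Z' \subsetneq X$, and the inductive hypothesis applied to $(Z',F)$ will produce a subgroup $F' \subseteq F$ of index at most $j^{rc}_{d-1}$ fixing a point of $Z' \subseteq X$, hence of $X$.

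If $Z$ is a point, then $Y$ is a Fano variety with (at worst) canonical singularities on which $G$ acts, and Lemma~\ref{lem:j1} directly supplies a subgroup $F \subseteq G$ of index at most $j^{\Fano}_d$ fixing some $y \in Y$; take $T = \{y\}$. If $\dim Z > 0$, the Graber-Harris-Starr theorem (invoked among the ingredients of Lemma~\ref{lem:xfgs}) implies that $Z$ is rationally connected, so the inductive hypothesis applied to $Z$ furnishes a subgroup $F \subseteq G$ of index at most $j^{rc}_{\dim Z} \le j^{rc}_{d-1}$ fixing some $z \in Z$. The fibre $Y_z$ is $F$-invariant, and combining the Hacon-McKernan solution of Shokurov's rational connectedness conjecture with the relative Kawamata subadjunction formula (both listed between Lemma~\ref{lem:xfgs} and Proposition~\ref{prop:j2}), we extract from $Y_z$ a minimal $F$-invariant log-canonical centre which is rationally connected and of Fano type. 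Applying Lemma~\ref{lem:j1} to this Fano-type centre (or, in the degenerate case where the centre is already a point, taking it directly) yields the desired proper subvariety $T \subsetneq Y$.

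Setting $j^{rc}_d$ to be the product $j^{rc}_{d-1} \cdot j^{\Fano}_d$ (adjusted to absorb the analogous constant coming from the fibred case) delivers the required universal bound. The main obstacle is the positive-dimensional base case: the fibre $Y_z$ over an $F$-fixed point is a special, non-generic fibre, so it need not itself be rationally connected or of Fano type, even though the general fibres of $f$ are. Overcoming this is exactly the purpose of the log-canonical-centre and subadjunction machinery developed by Prokhorov-Shramov, which robustly produces a rationally connected, Fano-type, $F$-invariant subvariety of $Y_z$ that inherits just enough geometry to feed into Lemma~\ref{lem:j1}, and thereby closes the induction through Lemma~\ref{lem:xfgs}.
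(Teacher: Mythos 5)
Your argument follows the paper's own strategy almost exactly: induct on dimension, pass to a smooth $G$-equivariant model, run a $G$-equivariant MMP to a Mori fibre space, split into the cases ``base is a point'' (apply Lemma~\ref{lem:j1}) and ``base is positive-dimensional'' (induct on the rationally connected base and extract an invariant rationally connected subvariety from the fibre over the fixed point), and finally transport the invariant subvariety back to $X$ via Lemma~\ref{lem:xfgs} and induct. This is precisely the skeleton of \cite[Lem.~4.7]{MR3483470} as sketched in the paper, with the constant recorded as $j^{rc}_d := (j_d)^2$ where $j_d := \max\{j^{rc}_1,\dots,j^{rc}_{d-1}, j^{\Fano}_d\}$.

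There is, however, one step you insert that is both unnecessary and not justified as stated. In the fibred case, after using the Hacon--McKernan and subadjunction machinery to cut out an $F$-invariant rationally connected \emph{Fano-type} centre $C \subseteq Y_z$, you apply Lemma~\ref{lem:j1} to $C$ to locate a fixed point. But Lemma~\ref{lem:j1} is stated for \emph{Fano} varieties with canonical singularities; a variety of Fano type need not itself be Fano (it only admits a klt boundary making a weak log Fano pair), so the hypotheses of Lemma~\ref{lem:j1} are not met. This step is also not needed: the centre $C$ is already a proper, $F$-invariant, rationally connected subvariety of $Y$ (it sits inside the fibre $Y_z \subsetneq Y$, which is proper because $\dim Z > 0$), so you should simply take $T := C$, apply Lemma~\ref{lem:xfgs} to obtain a proper $F$-invariant rationally connected $Z' \subsetneq X$, and then invoke the induction hypothesis on $(Z',F)$. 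This is what the paper does, and removing your extra invocation of Lemma~\ref{lem:j1} also tidies up the bookkeeping that required you to ``adjust'' the constant: two factors suffice, not three.
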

\begin{proof}[Sketch of proof]
  We argue by induction on the dimension.  Since the case $d=1$ is trivial,
  assume that $d > 1$ is given, and that numbers $j^{rc}_1, …, j^{rc}_{d-1}$
  have been found.  Set
  $$
  j_d := \max \{j^{rc}_1, …, j^{rc}_{d-1}, j^{\Fano}_d \}
  \quad\text{and}\quad j^{rc}_d := (j_d)².
  $$

  Assume that a $d$-dimensional, rationally connected projective variety $X$ and
  a finite subgroup $G ⊆ \Aut(X)$ are given.  By induction hypothesis, it
  suffices to find a subgroup $G' ⊆ G$ of index $|G:G'| ≤ j_d$ and a
  $G'$-invariant, rationally connected, proper subvariety $X' ⊊ X$.

  If $\wtilde{X} → X$ is the canonical resolution of singularities, as in
  \cite{BM96}, then $\wtilde{X}$ is likewise rationally connected, $G$ acts on
  $\wtilde{X}$ and the resolution morphism is equivariant.  Since images of
  rationally connected, invariant subvarieties are rationally connected and
  invariant, we may assume from the outset that $X$ is smooth.  But then we can
  run a $G$-equivariant Minimal Model Programme\footnote{The existence of an MMP
    terminating with a fibre space is \cite[Cor.~1.3.3]{BCHM10}, which we have
    quoted before.  The fact that the MMP can be chosen in an equivariant manner
    is not explicitly stated there, but follows without much pain.} terminating
  with a $G$-Mori fibre space,
  $$
  \xymatrix{ %
    X \ar@{-->}[rrr]^{G\text{-equivariant MMP}} &&& X' \ar[rrr]^{G\text{-Mori fibre space}} &&& Y.
  }
  $$
  In the situation at hand, Lemma~\ref{lem:xfgs} claims that to find proper,
  invariant, rationally connected varieties on $X$, it is equivalent to find
  them on $X'$.  The fibre structure, however, makes that feasible.

  Indeed, if the base $Y$ of the fibration happens to be a point, then $X'$ is
  Fano with terminal singularities, and Lemma~\ref{lem:j1} applies.  Otherwise,
  let $G_Y$ be the image of $G$ in $\Aut(Y)$, let $G_{X'/Y} ⊆ G$ be the
  ineffectivity of the $G$-action on $Y$, and consider the exact sequence
  $$
  1 → G_{X'/Y} → G → G_Y → 1.
  $$
  As the image of the rationally connected variety $X'$, the base $Y$ is itself
  rationally connected.  By induction hypothesis, using that $\dim Y < \dim X$,
  there exists a subgroup $F'_Y ⊆ G_Y$ of index $|G_Y:F'_Y| < j_d$ that acts on
  $Y$ with a fixed point, say $y ∈ Y$.  Let $G' ⊂ G$ be the preimage of $G'_Y$.
  The fibre $X_y$ is then invariant with respect to the action of $G'$ and
  rationally chain connected by \cite[Cor.~1.3]{HMcK07}.  Better still,
  Prokhorov and Shramov show that it contains a rationally connected,
  $G'$-invariant subvariety.  The induction applies.
\end{proof}

\subsection{Proof of Theorem~\ref*{thm:jordan} (``Jordan property of Cremona groups'')}
\label{ssec:prof}
  
Given a number $d ∈ ℕ$, we claim that the number $j := j^{rc}_d·j^{\Jordan}_d$
will work for us, where $j^{rc}_d$ is the number found in
Proposition~\ref{prop:j2}, and $j^{\Jordan}_d$ comes from Jordan's
Theorem~\ref{thm:jordan-lin}.  To this end, let $X$ be any rationally connected
variety of dimension $d$, and let $G ⊆ \Bir(X)$ be any finite group.  Blowing up
the indeterminacy loci of the birational transformations $g ∈ G$ in an
appropriate manner, we find a birational, $G$-equivariant morphism
$\wtilde{X} → X$ where the action of $G$ in $\wtilde{X}$ is regular rather than
merely birational, see \cite[Thm.~3]{MR0337963}.  Combining with the canonical
resolution of singularities, we may assume that $\wtilde{X}$ is smooth.
Proposition~\ref{prop:j2} will then guarantee the existence of a subgroup
$G' ⊆ G$ of index $|G:G'| ≤ j^{rc}_d$ acting on $\wtilde{X}$ with a fixed point
$\wtilde{x}$.  Standard arguments (``linearisation at a fixed point'') that go
back to Minkowski show that the induced action of $G'$ on the Zariski tangent
space $T_{\wtilde{x}}(\wtilde{X})$ is faithful, so that Jordan's
Theorem~\ref{thm:jordan-lin} applies.  In fact, assuming that there exists an
element $g ∈ G' ∖ \{e\}$ with
$Dg|_{\wtilde{x}} = \Id_{T_{\wtilde{x}}(\wtilde{X})}$, choose coordinates and
use a Taylor series expansion to write
$$
g(\vec{x}) = \vec{x} + A_k(\vec{x}) + A_{k+1}(\vec{x}) + …
$$
where each $A_m(\vec{x})$ is homogeneous of degree $m$, and $A_k$ is non-zero.
Given any number $n$, observe that
$$
g^n(\vec{x}) = \vec{x} + n·A_k(\vec{x}) + \text{(higher order terms)}.
$$
Since the base field has characteristic zero, this contradicts the finite order
of $g$.  \qed

\ifdefined\subtitle
\else
\fi

\end{document}